\title
{The method of forcing}
\newtheorem{thm}{Theorem}[section]
\newtheorem{lem}[thm]{Lemma}
\newtheorem{prop}[thm]{Proposition}
\newtheorem{cor}[thm]{Corollary}
\newtheorem{obs}[thm]{Observation}
\newtheorem{property}{Property}
\theoremstyle{remark}
\newtheorem{remark}[thm]{Remark}
\newcounter{my_enumerate_counter}
\newcommand{\pushcounter}{\setcounter{my_enumerate_counter}{\value{enumi}}}
\newcommand{\popcounter}{\setcounter{enumi}{\value{my_enumerate_counter}}}
\theoremstyle{definition}
\newtheorem{defn}[thm]{Definition}
\newtheorem{example}[thm]{Example}
\newcommand{\Seq}[1]{( #1 )}
\newcommand{\Th}{{}^{\textrm{th}}}
\newcommand\act{\curvearrowright}
\newcommand\Int[2]{{#1}({#2})}
\newcommand{\forces}{\Vdash}
\newcommand{\lbrak}{[\mspace{-2.2mu}[}
\newcommand{\rbrak}{]\mspace{-2.2mu}]}
\newcommand{\truth}[1]{\lbrak #1 \rbrak}
\newcommand{\one}{\mathbf{1}}
\newcommand{\C}{\mathtt{c}}
\newcommand{\bigmeet}{\bigwedge}
\newcommand{\bigjoin}{\bigvee}
\newcommand\mand{\textrm{ and }}
\newcommand{\dom}{\operatorname{dom}}
\newcommand{\cl}{\operatorname{cl}}
\newcommand\cf{\operatorname{cf}}
\newcommand\cat{{}^\smallfrown}
\newcommand{\symdif}{\triangle}
\newcommand{\BC}{\operatorname{BC}}
\newcommand\Rand{\mathcal{R}}
\newcommand\Cohen{\mathcal{C}}
\newcommand\Mathias{\mathcal{M}}
\newcommand\Ameoba{\mathcal{A}}
\newcommand\Ocal{\mathcal{O}}
\newcommand\R{\mathbf{R}}
\newcommand\N{\mathbf{N}}
\newcommand\Z{\mathbf{Z}}
\newcommand\Q{\mathbf{Q}}
\newcommand\Bcal{\mathcal{B}}
\newcommand\Ccal{\mathcal{C}}
\newcommand\Dcal{\mathcal{D}}
\newcommand\Ecal{\mathcal{E}}
\newcommand\Fcal{\mathcal{F}}
\newcommand\Pcal{\mathcal{P}}
\newcommand\Qcal{\mathcal{Q}}
\newcommand\Rcal{\mathcal{R}}
\newcommand\Scal{\mathcal{S}}
\newcommand\Ucal{\mathcal{U}}
\newcommand\Xcal{\mathcal{X}}
\keywords{forcing, generically absolute, random, universally Baire}
\subjclass[2010]{Primary: 03E40; Secondary: 03E57, 03E75, 05D10, 54C35}
\author{Justin Tatch Moore}
\address{Department of Mathematics \\ Cornell University\\
Ithaca, NY 14853--4201 \\ USA}
\thanks{The present article started out as a set of notes prepared a
tutorial presented during
workshop 13w5026 at the Banff International Research Station in November 2013.
The author would like to thank BIRS for its generous
hospitality during the meeting.
The author's research and travel to the workshop
was supported in part by NSF grant DMS-1262019;
the preparation of this article was supported
in part by NSF grant DMS-1600635.}
\email{{\tt justin@math.cornell.edu}}
\begin{document}

\begin{abstract}
The purpose of this article is to give a presentation of the method
of forcing aimed at someone with a minimal knowledge of set theory and logic.
The emphasis will be on how the method can be used to prove theorems
in ZFC.
\end{abstract}

\maketitle

\section{Introduction}

Let us begin with two thought experiments.
First consider the following ``paradox'' in probability:
if $Z$ is a continuous random variable, then for any possible outcome
$z$ in $\R$, the event $Z \ne z$ occurs \emph{almost surely} (i.e. with probability $1$).
How does one reconcile this with the fact that, in a truly random outcome, every event having probability 1 should occur?
Recasting this in more formal language we have that,
``for all $z \in \R$, \emph{almost surely} $Z \ne z$'',
while ``\emph{almost surely} there exists a $z \in \R$, $Z = z$.''

Next suppose that, for some index set $I$, $\Seq{Z_i : i \in I}$ is a family of independent continuous random variables.
It is a trivial matter that for each pair $i \ne j$, the inequality $Z_i \ne Z_j$ holds with probability $1$.
For large index sets $I$, however, 
\[
|\{Z_i : i \in I\}| = |I|
\]
holds with probability $0$; in fact this event contains no outcomes if $I$ is larger in cardinality than $\R$.
In terms of the formal logic, we have that,
``for all $i\ne j$ in $I$, \emph{almost surely} the event $Z_i \ne Z_j$ occurs'', while
``\emph{almost surely it is false that} for all $i \ne j \in I$, the event $Z_i \ne Z_j$ occurs''.

It is natural to ask whether it is possible to revise the notion of \emph{almost surely}
so that its meaning remains unchanged for simple logical assertions such as $Z_i \ne Z_j$
but such that it commutes with quantification.
For instance one might reasonably ask that, in the second example, 
$|\{Z_i : i \in I\}| = |I|$ should occur \emph{almost surely} regardless of the cardinality of the index set.
Such a formalism would describe truth in a necessarily larger model of mathematics,
one in which there are new outcomes to the random experiment which did not exist before the experiment was
performed.

The method of forcing, which was introduced by Paul Cohen to establish the independence of the Continuum
Hypothesis \cite{CON_negCH} and put into its current form by Scott \cite{ind_CH:Scott} and Solovay \cite{solovay_model},
addresses issues precisely of this kind.
From a modern perspective, forcing provides a formalism for examining what occurs \emph{almost surely}
not only in probability spaces but also in a much more general setting than what is provided by
our conventional notion of randomness.
Forcing has proved extremely useful in developing and understanding of models of set theory and in determining what
can and cannot be proved within the standard axiomatization of mathematics (which we will take to be ZFC).
In fact it is a heuristic of modern set theory that if a statement arises naturally in mathematics and is consistent, then
its consistency can be established using forcing,
possibly starting from a large cardinal hypothesis.

The focus of this article, however, is of a different nature:
the aim is to demonstrate how the method of forcing can be used to \emph{prove theorems} as opposed to
\emph{establish consistency results}.
Forcing itself concerns the study of adding \emph{generic objects} to a model of set theory, resulting in a larger model of
set theory.
One of the key aspects of forcing is that it provides a formalism for studying what happens \emph{almost surely}
as the result of introducing a generic object.
An analysis of this formalism sometimes leads to new results concerning the original model itself --- results which are in fact independent
of the model entirely.
This can be likened to how the probabilistic method is used in finite combinatorics in settings where more constructive methods fail (see, e.g., \cite{probabilistic_method}).

In what follows, we will examine several examples of how forcing can be used to prove theorems.
Admittedly there are relatively few examples of such applications thus far.
It is my hope that by reaching out to a broader audience, this article will inspire more
innovative uses of forcing in the future.

Even though the goals and examples are somewhat unconventional,
the forcings themselves and the development of the theory
are much the same as one would encounter in a more standard treatment of forcing.
The article will only assume a minimal knowledge of set theory and logic, similar to what a graduate or advanced undergraduate
student might encounter in their core curriculum.
In particular, no prior exposure to forcing will be assumed.

The topics which will be covered include the following:
\emph{genericity},
\emph{names},
the \emph{forcing relation},
\emph{absoluteness},
the \emph{countable chain condition},
\emph{countable closure}, and
\emph{homogeneity arguments}.
These concepts will be put to use though several case studies:
\begin{enumerate} 

\item partition theorems of Galvin, Nash-Williams, and Prikry for finite and infinite subsets of $\omega$;

\item intersection properties of uncountable families of events in a probability space;

\item a partition theorem of Halpern and L\"auchli for products of finitely branching trees;

\item a property of marker sequences for Bernoulli shifts;

\item Todorcevic's analysis of Rosenthal compacta.

\pushcounter
\end{enumerate}
Sections marked with a `*' will not be needed for later sections.

While we will generally avoid proving consistency results,
the temptation to establish the consistency
of the Continuum Hypothesis and its negation along the way is too great ---
this will be for free given what needs to be developed.
For those interested in supplementing the material in this article with a more conventional approach to forcing,
Kunen's \cite{set_theory:Kunen} is widely considered to be the standard treatment.
It also provides a complete introduction to combinatorial set theory and independence results;
the reader looking for further background on set theory is referred there.
See also \cite{multiple_forcing}, \cite{ind_CH:Scott}, \cite{forcing:Shoenfield}, \cite{solovay_model}, \cite{forcing_appl}.
The last section contains a list of additional suggestions for further reading.

This exposition grew out of a set of lecture notes prepared for workshop 13w5026
on ``Axiomatic approaches to forcing techniques in set theory''
at the Banff International Research Station in November 2013.
None of the results presented below are my own.
I'll finish by saying that this project was inspired by countless
conversations with Stevo Todorcevic over the years, starting with my time as his student
in the 1990s.

\section{Preliminaries}

\label{prelim:sec}

Before beginning, we will fix some conventions and
review some of the basic concepts from set theory which will be needed.
Further reading and background can be found in \cite{set_theory:Kunen}.
A set $x$ is \emph{transitive} if whenever $z \in y$ and $y \in x$, then $z \in x$.
Equivalently, $x$ is transitive if and only if every element of $x$ is also a subset of $x$.
An \emph{ordinal} is a set $\alpha$ which is transitive and wellordered by $\in$.
It is readily checked that every element of an ordinal is also an ordinal.
Every wellorder is isomorphic to an ordinal; moreover this ordinal and the isomorphism are unique.
If $\alpha$ and $\beta$ are two ordinals, then exactly one of the following is true:
$\alpha \in \beta$, $\beta \in \alpha$, or $\alpha = \beta$.
We will often write $\alpha < \beta$ to denote $\alpha \in \beta$ if $\alpha$ and $\beta$ are ordinals.

Notice that an ordinal is the set of ordinals smaller than it.
The least ordinal is the emptyset, which is denoted $0$.
If $\alpha$ is an ordinal, then $\alpha + 1$ is the least ordinal greater than $\alpha$;
this coincides with the set $\alpha \cup \{\alpha\}$.
The finite ordinals therefore coincide with the nonnegative integers:
$n := \{0,\ldots,n-1\}$.
The least infinite ordinal is $\omega := \{0,1,\ldots\}$, which coincides with the set of 
natural numbers.
We will adopt the convention that the set $\N$ of natural numbers does not include $0$.
Unless otherwise specified, $i,j,k,l,m,n$ will be used to denote finite ordinals.

An ordinal $\kappa$ is a \emph{cardinal} if whenever $\alpha < \kappa$, $|\alpha| < |\kappa|$.
If $\alpha$ is an ordinal which is not a successor, then we say that $\alpha$ is a limit ordinal.
In this case, the \emph{cofinality} of $\alpha$ is the minimum cardinality of a cofinal subset of $\alpha$.
A cardinal $\kappa$ is \emph{regular} if its cofinality is $\kappa$.
The regularity of a cardinal $\kappa$ is equivalent to the assertion that if $\kappa$ is 
partitioned into fewer than $\kappa$ sets, then one of these sets has cardinality $\kappa$.
If $\kappa$ is a cardinal, then $\kappa^+$ denotes the least cardinal greater than $\kappa$.
Cardinals of the form $\kappa^+$ are called \emph{successor cardinals} and are always regular.

Since every set can be wellordered, every set has the same cardinality as some (unique) cardinal;
we will adopt the convention that $|x|$ is the cardinal $\kappa$ such that $|x| = |\kappa|$.
If $\alpha$ is an ordinal, then we define $\omega_\alpha$ to be the $\alpha\Th$ infinite cardinal.
Thus $\omega_0 := \omega$ and $\omega_\beta := \sup_{\alpha < \beta} (\omega_\alpha)^+$ if $\beta > 0$.
The Greek letters $\alpha$, $\beta$, $\gamma$, $\xi$, and $\eta$ will be used to refer to ordinals;
the letters $\kappa$, $\lambda$, $\mu$, and $\theta$ will be reserved for cardinals.

If $A$ and $B$ are sets, then $B^A$ will be used to denote the collection of all functions from $A$ into $B$.
For us, a function is simply a set of ordered pairs.
Thus if $B \subseteq C$, then $B^A \subseteq C^A$ and if $f$ and $g$ are functions, $f \subseteq g$ means
that $f$ is a restriction of $g$.
There is one exception to this notation worth noting.
We will follow the custom of writing $\aleph_\alpha$ for $\omega_\alpha$ in situations where the underlying
order structure is unimportant (formally $\aleph_\alpha$ equals $\omega_\alpha$).
Arithmetic expressions involving $\aleph_\alpha$'s will be used to refer to the cardinality of the resulting set.
For instance $2^{\omega_1}$ is a collection of functions whose cardinality is $2^{\aleph_1}$, which is a cardinal.


If $(X,d)$ is a metric space, then we define its \emph{completion}
to be the set of equivalence classes of Cauchy sequences,
where $\Seq{x_n : n < \infty}$ is equivalent to $\Seq{y_n : n < \infty}$ if $d(x_n,y_n) \to 0$.
In particular, we will regard the set of real numbers $\R$
as being the completion of the rational numbers $\Q$
with the usual metric $d(p,q) := |p-q|$.
Notice that even if $X$ is complete, the completion is not literally equal to $X$, even though it is canonically isometric
to $X$.
This will serve as a minor annoyance when we define names for complete metric spaces in Section \ref{semantics:sec}.

Finally, we will need some notation from first order logic.
The \emph{language of set theory} is the first order
language with a single binary relation $\in$.
If $\phi$ is a formula in the language of set theory, then $\phi(v_1,\ldots,v_n)$ will
be used to indicate that every free variable in $\phi$ is
$v_i$ for some $i =1,\ldots, n$. 
If $x_1, \ldots, x_n$ are constants, then $\phi(x_1,\ldots,x_n)$ is the result of simultaneously substituting $x_i$ for $v_i$ for each $i$.
If $\phi$ is a formula and $v$ is variable and $x$ is a term, then $\phi[x/v]$ is the result of substituting
$x$ for every free occurrence of $v$ in $\phi$ (of which there may be none). 
If $\phi$ has no free variables, then we say that $\phi$ is a \emph{sentence}.
If every quantifier in $\phi$ is 
of the form $\exists x \in y$ or $\forall x \in y$ for some variables $x$ and $y$,
then we say that the quantification in $\phi$ is \emph{bounded}.
Many assertions can be expressed using only bounded quantification:
for instance the assertions ``$A = \bigcup B$'' and ``$(\Qcal,\leq)$ is a partially ordered set''
are expressible by formulas with only bounded quantification.

We now recall some foundational results in set theory which justify our emphasis
on transitive models of set theory below.
A binary relation $R$ is \emph{well founded} if there is no infinite sequence
$\Seq{x_n : n < \infty}$ such that $x_{n+1} R x_n$.
A binary relation $R$ on a set $X$ is \emph{extensional} if for all $x$ and $y$ in $X$,
$\{z \in X : z R x\} = \{z \in X : z R y\}$ implies $x = y$.
Among the axioms of ZFC are the assertions that $\in$ is well founded and extensional.

\begin{prop}
Suppose that $(X,E)$ is a binary relation and $E$ is well founded and extensional.
Then $(X,E)$ is uniquely isomorphic to a transitive set equipped with $\in$.
In particular, if $(X,E)$ is a model of ZFC and $E$ is well founded, then
$(X,E) \simeq (M,\in)$ for some transitive set $M$.
\end{prop}

\begin{prop}
If $M$ is a transitive set, $\phi(v_1,\ldots,v_n)$ is a first order formula with only bounded
quantification and $a_1,\ldots,a_n \in M$,
then $(M,\in) \models \phi(a_1,\ldots,a_n)$ if and only if $\phi(a_1,\ldots,a_n)$ is true.
\end{prop}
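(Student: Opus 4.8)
The plan is to argue by induction on the structure of $\phi$, proving the equivalence simultaneously for \emph{all} formulas with only bounded quantification and \emph{all} choices of parameters from $M$; this uniformity in the parameters is what makes the induction go through. The one substantive ingredient is transitivity: if $a \in M$ then $a \subseteq M$, so every element of a set named by a parameter already lies in $M$.

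For the base case, consider the atomic formulas. If $\phi$ is $v_i \in v_j$, then $(M,\in) \models a_i \in a_j$ means exactly that the pair $(a_i,a_j)$ lies in the interpretation of $\in$ in $(M,\in)$, which is just genuine membership restricted to $M$; since $a_i, a_j \in M$, this holds if and only if $a_i \in a_j$ is true. The case $v_i = v_j$ is identical. For the Boolean connectives --- $\neg\psi$, $\psi \meet \chi$, and so on --- the equivalence is immediate from the Tarski definition of satisfaction together with the induction hypothesis applied to the shorter constituents $\psi$ and $\chi$ under the same parameters.

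The only step requiring care is the bounded quantifier. Suppose $\phi(v_1,\ldots,v_n)$ is $\exists x \in v_j\ \psi(x,v_1,\ldots,v_n)$, where $\psi$ has only bounded quantification and is shorter than $\phi$. Unwinding satisfaction in $(M,\in)$ gives
\[
(M,\in) \models \phi(a_1,\ldots,a_n) \iff \exists b \in M\ \bigl((M,\in)\models b \in a_j \ \text{ and }\ (M,\in) \models \psi(b,a_1,\ldots,a_n)\bigr).
\]
By the atomic case, $(M,\in)\models b \in a_j$ is equivalent to $b \in a_j$; and since $a_j \in M$ with $M$ transitive, $a_j \subseteq M$, so the clause ``$b \in M$ and $b \in a_j$'' collapses to simply ``$b \in a_j$''. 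By the induction hypothesis applied to $\psi$ with the parameters $b,a_1,\ldots,a_n$ (all in $M$), the clause $(M,\in)\models \psi(b,a_1,\ldots,a_n)$ is equivalent to $\psi(b,a_1,\ldots,a_n)$ being true. Combining, $(M,\in)\models \phi(a_1,\ldots,a_n)$ holds iff there is some $b \in a_j$ with $\psi(b,a_1,\ldots,a_n)$, i.e. iff $\phi(a_1,\ldots,a_n)$ is true. The universal case $\forall x \in v_j\ \psi$ is dual, or may be reduced to the existential case by rewriting it as $\neg\,\exists x \in v_j\ \neg\psi$ and invoking the cases already settled.

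The main ``obstacle'' is thus entirely a matter of bookkeeping: one must phrase the induction so that the hypothesis covers formulas with an additional free variable whose parameter ranges over a set named by one of the original parameters, since that is precisely the configuration produced by stripping off a bounded quantifier. Once the statement being inducted on reads ``for every bounded formula and every parameter tuple from $M$,'' each clause is mechanical, with transitivity doing the single piece of genuine work in the quantifier step.
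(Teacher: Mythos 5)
Your proof is correct: the induction on formula complexity, carried out uniformly over all parameter tuples from $M$, with transitivity supplying exactly the collapse of ``$b \in M$ and $b \in a_j$'' to ``$b \in a_j$'' in the bounded-quantifier step, is the standard argument for this fact. The paper states this proposition as recalled background without giving a proof, so there is nothing to compare against beyond noting that your argument is the canonical one and is complete as written.
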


Thus, for example, if $M$ is a transitive set and $\Qcal$ is a partial order in $M$,
then $(M,\in)$ satisfies that $\Qcal$ is a partial order.

\section{What is forcing?}

\label{what_is:sec}
Forcing is the procedure of adjoining to a model $M$ of set theory a new \emph{generic object} $G$ in order to
create a larger model $M[G]$.
In this context, 
$M$ is referred to as the \emph{ground model} and $M[G]$ is a \emph{generic extension} of $M$.
For us, the generic object will always be a new subset of some partially ordered set $\Qcal$ in $M$,
known as a \emph{forcing}.
This procedure has the following desirable properties:
\begin{enumerate} \popcounter

\item $M[G]$ is also a model of set theory and is the minimal model of set theory which has as members all the elements of $M$
and also the generic object $G$.

\item The truth of mathematical statements in $M[G]$ can be determined by a formalism within $M$, known as
the \emph{forcing relation}, which is completely specified by $\Qcal$.
The workings of this formalism are purely internal to $M$.

\pushcounter
\end{enumerate}
While it will generally not concern us in this article, the key meta-mathematical feature of forcing is that it is often
the case that it is easier to determine truth in the generic extension $M[G]$ than in the \emph{ground model} $M$.
For instance Cohen specified the description of a forcing $\Qcal$ with the property that if $M[G]$ is any generic extension
created by forcing with $\Qcal$, then $M[G]$ necessarily satisfies that the Continuum Hypothesis is false
\cite{CON_negCH} (see Section \ref{ccc:sec} below).
In fact the second thought experiment presented at the beginning of the introduction is derived from a variation of this forcing.
It is also not difficult to specify different forcings which always produce generic extensions satisfying the Continuum Hypothesis
(see Section \ref{sigma-closed:sec} below).

There are two perspectives one can have of forcing: one which is primarily semantic
and one which is primarily syntactic.
Each has its own advantages and disadvantages.
The semantic approach makes certain properties of the forcing relation and the generic
extension intuitive and transparent.
On the other hand, it is fraught with metamathematical issues and philosophical
hangups.
The syntactic approach is less intuitive but more elementary and makes certain other
features of forcing constructions more transparent.
We will tend to favor the syntactic approach in what follows.
We will now fix some terminology.

\begin{defn}[forcing]
A \emph{forcing} is a set $\Qcal$ equipped with a transitive reflexive relation $\leq_\Qcal$ which
contains a greatest element $\one_\Qcal$.
If $\Qcal$ is clear from the context, the subscripts are usually suppressed.
\end{defn}

Our prototypical example of a forcing is $\Rand$,
the collection of all measurable subsets of $[0,1]$ having positive Lebesgue measure,
ordered by containment.
Elements of a forcing are often referred to as \emph{conditions} and are regarded as being approximations to 
a desired \emph{generic object}.
In the analogy with randomness, the conditions correspond to the events of the probability space which have positive measure.
If $q \leq p$, then we sometime say that $q$ is \emph{stronger than} $p$ or that $q$ \emph{extends} $p$.
We think of $q$ as providing a better approximation to the generic object.
It will be helpful to abstract the notion of an outcome in terms of a collection of mutually compatible events.
A set $G \subseteq \Qcal$ is a \emph{filter} if $G$ is nonempty, \emph{upward closed}, and \emph{downward directed} in $\Qcal$:
\begin{enumerate}
\popcounter

\item if $q$ is in $G$, $p$ is in $\Qcal$ and $q \leq p$, then $p$ is in $G$;

\item if $p$ and $q$ are in $G$, then there is an $r$ in $G$ such that $r \leq p,q$.

\pushcounter
\end{enumerate}
If $p$ and $q$ are in $\Qcal$, then we say that $p$ and $q$ are \emph{compatible}
if there is a $r$ in $\Qcal$ such that $r \leq p,q$.
Otherwise we say that $p$ and $q$ are \emph{incompatible}.
Notice that two conditions are compatible exactly when there is a filter which contains both of them.
Of course two events in $\Rand$ are compatible exactly when they intersect in a set of positive measure.

A forcing $\Qcal$ is \emph{separative} if whenever $p \not \leq q$, there is an $r \leq p$ such
that $r$ and $q$ are incompatible.
Notice that if $\Qcal$ is any forcing, we can define an equivalence relation $\equiv$ on $\Qcal$
by $q \equiv p$ if 
\[
\{r \in \Qcal : r \textrm{ is compatible with } p\} = \{r \in \Qcal : r \textrm{ is compatible with } q\}.
\]
The quotient is ordered by $[q] \leq [p]$ if
\[
\{r \in \Qcal : r \textrm{ is compatible with } q\} \subseteq \{r \in \Qcal : r \textrm{ is compatible with } p\}.
\]
This quotient ordering is separative and is known as the \emph{separative quotient}.
Notice that if $\Qcal$ is separative, then $\equiv$ is just equality and the quotient ordering is just the usual ordering.
The forcing $\Rand$ is not separative; in this example $p \equiv q$ if $p$ and $q$ differ by a measure $0$ set.
It is often convenient to assume forcings are separative and we will often pass to the separative quotient without
further mention (just as one often writes equality of functions in analysis when they
really mean equality modulo a measure 0 set).

The following definition will play a central role in all that follows.

\begin{defn}[generic]
If $M$ is a collection of sets and $\Qcal$ is a forcing, then we say that a filter $G \subseteq \Qcal$ is \emph{$M$-generic}
if whenever $E \subseteq \Qcal$ is in $M$,
there is a $p \in G$ which is either in $E$ or is incompatible with every element of $E$.
\end{defn}

A family $E$ of conditions is said to be \emph{exhaustive}
if whenever $p$ is an element of $\Qcal$, there is an element $q$ of $E$ which is compatible with $p$.
Notice that if $\Ecal$ is a collection of exhaustive sets and $G \subseteq \Qcal$ is an $\Ecal$-generic
filter, then $G$ must intersect every element of $\Ecal$.
Also observe that if $\Scal := \{\{q\} : q \in Q\}$, then the $\Scal$-generic filters are exactly the \emph{ultrafilters} ---
those filters which are maximal.

In order to illustrate the parallel with randomness, take $\Qcal = \Rcal$.
Observe that if $E \subseteq \Rand$ is exhaustive, then its union has full measure.
Conversely, if $E \subseteq \Rand$ is countable and $\bigcup E$ has full measure,
then $E$ is exhaustive.
Thus in this setting, genericity is an assertion that certain measure 1 events occur.

There are two other order-theoretic notions closely related to being exhaustive which it will be useful to define.
A family of pairwise incompatible conditions is said to be an \emph{antichain}.
Notice that this differs from the usual
notion of an antichain in a poset, where antichain would mean pairwise \emph{incomparable}.
Observe that any maximal antichain is exhaustive but that in general exhaustive families need not be pairwise incompatible.
A family $\Dcal$ of conditions is \emph{dense} if every element of $\Qcal$ has an extension in $\Dcal$.
For example, the collection of all elements of $\Rand$ which are compact is dense in $\Rand$.
Observe that, by Zorn's Lemma, every dense set in a partial order contains a maximal antichain.
Two forcings are said to be \emph{equivalent} if they have dense suborders which are isomorphic.
The reason for this is that such forcings generate the same generic extensions.

\section[A theorem of Galvin and Nash-Williams]%
{A precursor to the forcing relation: a partition theorem of Galvin and Nash-Williams}

In this section, we will prove the following theorem of Galvin and Nash-Williams which generalizes
Ramsey's theorem.
The proof is elementary, but crucially employs the forcing relation, albeit implicitly.
We will also use this partition relation in Section \ref{GP:sec}.
The presentation in this section follows \cite[\S 5]{topics_topology:Todorcevic}.
If $A \subseteq \omega$, let $[A]^\omega$ denote all infinite subsets of $A$.

\begin{thm}[see \cite{Galvin-Prikry}]
\label{GNW}
If \(\Fcal\) is a family of nonempty finite subsets of \(\omega\), then there is an infinite subset
\(H\) of \(\omega\) such that either:
\begin{enumerate}[\indent a.]

\item no element of \(\Fcal\) is a subset of \(H\) or 

\item every infinite subset of \(H\) has an initial segment which is in \(\Fcal\).

\end{enumerate}
\end{thm}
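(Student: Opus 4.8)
The plan is to run the classical Galvin--Prikry argument, which is a fusion (``pure decision'') argument for the Mathias-style forcing whose conditions are pairs $(a, A)$ with $a \subseteq \omega$ finite, $A \subseteq \omega$ infinite, and $\max a < \min A$, ordered by declaring $(b, B) \leq (a, A)$ when $a$ is an initial segment of $b$, $b \setminus a \subseteq A$, and $B \subseteq A$. Say that $a$ \emph{accepts} $A$ if every infinite $X$ whose increasing enumeration begins with $a$ and satisfies $X \setminus a \subseteq A$ has some initial segment lying in $\Fcal$; this is precisely the assertion that $(a, A)$ forces that the generic infinite subset of $\omega$ has an initial segment in $\Fcal$. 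Say that $a$ \emph{rejects} $A$ if no infinite $B \subseteq A$ is accepted by $a$. Elements of $A$ below $\max a$ will always be discarded silently, so $\max a < \min A$ may be assumed throughout.

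First I would record the elementary facts. Accepting and rejecting both persist when $A$ is replaced by an infinite subset; if some initial segment of $a$ lies in $\Fcal$, then $a$ accepts every $A$; and for any $a, A$ there is an infinite $B \subseteq A$ that $a$ either accepts or rejects --- indeed if $a$ does not reject $A$ then some infinite $B \subseteq A$ is, by definition, accepted. This last point is the ``every condition extends to one deciding a given statement'' step. Most importantly: if $a$ rejects $A$ then $a$ has no initial segment in $\Fcal$, and in particular $a \notin \Fcal$; for, taking any infinite $B \subseteq A$, the failure of $a$ to accept $B$ yields an infinite $X$ beginning with $a$, with $X \setminus a \subseteq B$, having no initial segment in $\Fcal$ --- and the initial segments of $a$ are among those of $X$.

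The crux is the fusion lemma: \emph{if $a$ rejects $A$, then there is an infinite $B \subseteq A$ such that $a \cup \{n\}$ rejects $B$ for every $n \in B$.} I would argue by contradiction. If no such $B$ exists, then inside every infinite $C \subseteq A$ there is an $n \in C$ and an infinite $D \subseteq C \cap (n, \infty)$ accepted by $a \cup \{n\}$; iterating this produces $m_0 < m_1 < \cdots$ in $A$ and a decreasing chain of infinite sets $D_{m_0} \supseteq D_{m_1} \supseteq \cdots$ with $a \cup \{m_j\}$ accepting $D_{m_j}$ and $\{m_k : k > j\} \subseteq D_{m_j}$ for every $j$. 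Put $M := \{m_j : j \in \omega\}$. Then $a$ accepts $M$: any infinite $X$ beginning with $a$ and continuing inside $M$ has $a \cup \{m_{j_0}\}$ as an initial segment, where $m_{j_0} = \min(X \setminus a)$, and all later elements of $X$ lie in $D_{m_{j_0}}$, which $a \cup \{m_{j_0}\}$ accepts; so $X$ has an initial segment in $\Fcal$. Since $M \subseteq A$, this contradicts that $a$ rejects $A$. Keeping the nesting $\{m_k : k > j\} \subseteq D_{m_j}$ straight through the recursion is the step I expect to need the most care.

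Finally, to assemble $H$: if $\emptyset$ does not reject $\omega$, then $\emptyset$ accepts some infinite $H$, which is exactly alternative (b). Otherwise $\emptyset$ rejects $\omega$, and I build $H = \{h_0 < h_1 < \cdots\}$ together with a decreasing sequence $\omega = B_0 \supseteq B_1 \supseteq \cdots$ of infinite sets maintaining the invariant that \emph{every} subset of $\{h_0, \dots, h_{i-1}\}$ rejects $B_i$. To pass from $B_i$ to $B_{i+1}$, apply the fusion lemma once for each of the $2^i$ subsets $a$ of $\{h_0, \dots, h_{i-1}\}$, thinning the current infinite set each time (the elementary facts guarantee the rejections established earlier survive the thinning), then let $h_i$ be the least element of the set so obtained and $B_{i+1}$ the rest of it. For the resulting $H$, no $F \in \Fcal$ is a subset of $H$: such an $F$ would satisfy $F \subseteq \{h_0, \dots, h_{i-1}\}$ for some $i$, hence $F$ rejects $B_i$; but $F$ is an initial segment of itself and $F \in \Fcal$, contradicting the elementary facts. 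So alternative (a) holds.
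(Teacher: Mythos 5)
Your proof is correct; it is the classical Nash--Williams/Galvin--Prikry combinatorial forcing argument, but organized differently from the paper. (Your conventions are transposed --- you say ``$a$ accepts $A$'' where the paper says ``$A$ accepts $a$'' --- but the content is identical.) The paper never proves your fusion lemma. Instead it introduces a third notion, \emph{deciding} (either accepting or rejecting), and splits the work into three short steps: (i) if $A$ rejects $a$, then only finitely many $k \in A$ have $A$ accepting $a \cup \{k\}$ --- immediate, since the set of such $k$ would itself be an infinite subset of $A$ accepting $a$; (ii) a diagonalization produces an infinite $H$ deciding every one of its finite subsets; (iii) inside such an $H$, either $H$ accepts $\emptyset$ (alternative b), or step (i) lets one thin $H$ to an $A$ rejecting all of its finite subsets, and such an $A$ contains no member of $\Fcal$ (alternative a). Your route skips ``deciding'' entirely: you prove the stronger statement that a rejected pair can be shrunk so that \emph{all} one-point extensions $a \cup \{n\}$ with $n \in B$ are rejected, and this is exactly where you need the proof by contradiction with the nested sets $D_{m_j}$ --- the step that is trivial in the paper because, working inside a deciding set, ``not accepted'' already means ``rejected.'' In exchange, your final construction maintains a single invariant (every subset of the finite part rejects the reservoir) in one pass, where the paper performs two successive constructions (first decide everything, then extract a set rejecting everything). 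The quantity of work is comparable --- your stage $i$ handles $2^i$ subsets, just as the paper's construction of a deciding set does --- so the difference is organizational: the paper's lemmas are each nearly immediate, while your version concentrates the combinatorial content in one reusable fusion lemma of the kind that also underlies the pure decision property of Mathias forcing later in the paper.
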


Notice that Ramsey's theorem is the special case of this theorem in which all elements of
$\Fcal$ have the same cardinality.
We will now introduce some terminology which will be useful in organizing the proof of Theorem \ref{GNW}.
Fix \(\Fcal\) as in the statement of the theorem.
If \(a \subseteq \omega\), \(A \subseteq \omega\) with \(a\) finite and \(A\) infinite,
then we say that \(A\) \emph{accepts} \(a\) if whenever \(B \subseteq A\) is infinite with \(\max (a) < \min (B)\),
then \(a \cup B\) has an initial segment in \(\Fcal\).
We say that \(A\) \emph{rejects} \(a\) if no infinite subset of \(A\) accepts \(a\) and that
\(A\) \emph{decides} \(a\) if it either accepts or rejects \(A\).
  
We will prove the conclusion of the theorem through a series of lemmas.

\begin{lem}
If \(A\) rejects \(a\), then \(\{k \in A : A \textrm{ accepts } a \cup \{k\}\}\) is finite.
\end{lem}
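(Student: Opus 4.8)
The plan is to argue by contradiction using a diagonalization against the ``accepting'' relation. Suppose $A$ rejects $a$ but the set
\[
S := \{k \in A : A \textrm{ accepts } a \cup \{k\}\}
\]
is infinite. I will use $S$ to manufacture an infinite subset of $A$ that accepts $a$, contradicting the hypothesis that $A$ rejects $a$.

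First I would enumerate $S$ in increasing order as $\{k_0 < k_1 < \cdots\}$, discarding finitely many elements if necessary so that $\max(a) < k_0$; this is harmless since $S$ is infinite. Let $B := S$ (or this cofinal tail of it), an infinite subset of $A$ with $\max(a) < \min(B)$. Now I claim $B$ accepts $a$. To see this, let $C \subseteq B$ be any infinite set with $\max(a) < \min(C)$; write $\min(C) = k_j$ for some $j$. Since $k_j \in S$, the set $A$ accepts $a \cup \{k_j\}$. The tail $C \setminus \{k_j\}$ is an infinite subset of $A$ with $\max(a \cup \{k_j\}) = k_j < \min(C \setminus \{k_j\})$, so by the definition of ``accepts'' applied to $a \cup \{k_j\}$, the set $(a \cup \{k_j\}) \cup (C \setminus \{k_j\}) = a \cup C$ has an initial segment in $\Fcal$. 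Since $C$ was an arbitrary infinite subset of $B$ above $\max(a)$, this shows $B$ accepts $a$.

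But $B$ is an infinite subset of $A$ that accepts $a$, which directly contradicts the assumption that $A$ rejects $a$ (namely, that no infinite subset of $A$ accepts $a$). Hence $S$ must be finite.

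The only place requiring a little care is the bookkeeping around $\max(a) < \min(B)$ and the step where I absorb $k_j$ into $a$: I must check that $a \cup C$ is exactly $(a \cup \{k_j\}) \cup (C \setminus \{k_j\})$ and that the max/min inequality needed to invoke the definition of ``accepts'' for $a \cup \{k_j\}$ is genuinely satisfied. I do not anticipate a real obstacle here — the argument is a one-step unwinding of the definitions — but it is worth stating these inclusions explicitly so the reader sees why no infinite subset of $B$ can fail to produce an initial segment in $\Fcal$.
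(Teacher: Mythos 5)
Your proposal is correct and is essentially the paper's own argument: the paper simply asserts in one line that if $B := \{k \in A : A \textrm{ accepts } a \cup \{k\}\}$ were infinite, then $B$ would be an infinite subset of $A$ accepting $a$, contradicting rejection. Your write-up just unwinds the definition of ``accepts'' to justify that one line (absorbing $\min(C)$ into $a$), which is exactly the intended verification.
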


\begin{proof}
If \(B := \{k \in A : A \textrm{ accepts } a \cup \{k\}\}\) is infinite, then \(B\) is an infinite subset
of \(A\) which accepts \(a\).
\end{proof}

\begin{lem}
There is an infinite set \(H \subseteq \omega\) which decides all of its finite subsets.
\end{lem}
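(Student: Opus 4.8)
The plan is a diagonalization (fusion) argument. First I would record two elementary facts. \emph{Density:} for every infinite \(A \subseteq \omega\) and every finite \(a\) there is an infinite \(A' \subseteq A\) deciding \(a\) --- indeed, if \(A\) does not reject \(a\), then by the very definition of ``rejects'' some infinite \(A' \subseteq A\) accepts (hence decides) \(a\), and otherwise \(A\) itself decides \(a\). \emph{Monotonicity:} if \(A\) decides \(a\) and \(A' \in [A]^\omega\), then \(A'\) decides \(a\) in the same way --- every infinite subset of \(A'\) is an infinite subset of \(A\), so \(A'\) accepts \(a\) whenever \(A\) does, and \(A'\) rejects \(a\) whenever \(A\) does.

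Next I would build a \(\subseteq\)-decreasing chain of infinite sets \(\omega = A_0 \supseteq A_1 \supseteq A_2 \supseteq \cdots\) together with natural numbers \(k_n := \min A_n\) arranged so that \(A_{n+1}\) decides every subset of \(\{k_0,\dots,k_n\}\). Given \(A_n\), set \(k_n := \min A_n\), list the finitely many subsets of \(\{k_0,\dots,k_n\}\) as \(a_1,\dots,a_m\), and apply density repeatedly to obtain \(A_n = B_0 \supseteq B_1 \supseteq \cdots \supseteq B_m\) with each \(B_j\) infinite and deciding \(a_j\); then put \(A_{n+1} := \{x \in B_m : x > k_n\}\). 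By monotonicity \(A_{n+1}\) decides every subset of \(\{k_0,\dots,k_n\}\), and \(k_{n+1} > k_n\). Set \(H := \{k_n : n \in \omega\}\). (If one also wants \(H\) to decide \(\emptyset\), begin by replacing \(\omega\) with an infinite subset deciding \(\emptyset\), again by density.)

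Finally I would verify that \(H\) decides every finite \(a \subseteq H\); this is the heart of the matter. Given nonempty such \(a\), let \(n\) be the index with \(k_n = \max(a)\), so \(a \subseteq \{k_0,\dots,k_n\}\) and \(A_{n+1}\) decides \(a\). The crucial observation is that \(\{k_m : m > n\} \subseteq A_{n+1}\), because \(k_m = \min A_m \in A_m \subseteq A_{n+1}\) for \(m > n\). If \(A_{n+1}\) accepts \(a\), then any infinite \(B \subseteq H\) with \(\max(a) < \min(B)\) has \(\min(B) > k_n\), so \(B \subseteq \{k_m : m > n\} \subseteq A_{n+1}\) and \(a \cup B\) has an initial segment in \(\Fcal\); hence \(H\) accepts \(a\). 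If instead \(A_{n+1}\) rejects \(a\) but some infinite \(B \subseteq H\) accepted \(a\), then \(\{x \in B : x > k_n\}\) would be an infinite subset of \(A_{n+1}\) accepting \(a\) by monotonicity, contradicting that \(A_{n+1}\) rejects \(a\); hence \(H\) rejects \(a\). Either way \(H\) decides \(a\).

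I expect the one genuinely delicate point to be this last transfer of a decision from \(A_{n+1}\) to \(H\): it succeeds precisely because the bookkeeping is arranged so that \(k_n = \max(a)\) exactly --- which places the entire tail of \(H\) above \(\max(a)\) inside \(A_{n+1}\) --- and because ``accepts'' and ``rejects'' are inherited by infinite subsets. The construction itself (a finite thinning at each stage, then diagonalizing) is routine, as is the proof of the two preliminary facts.
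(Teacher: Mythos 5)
Your proof is correct and follows essentially the same route as the paper's: recursively thin out to a decreasing chain of infinite sets whose minima increase, arranging at each stage that the current set decides all subsets of the minima chosen so far, and then take \(H\) to be the set of minima. You simply make explicit the density and monotonicity facts (including the \(\emptyset\) case) that the paper's one-line proof leaves implicit.
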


\begin{proof}
Recursively construct infinite sets \(\omega \supseteq H_0 \supseteq H_1 \supseteq \cdots\) such that
if \(n_k:=\min (H_k)\) then \(n_{k-1} < n_k\) and \(H_k\) decides all subsets of \(\{n_i : i < k\}\).
It follows that \(H := \{n_i : i < \infty\}\) decides all finite subsets of \(\omega\).
\end{proof}

\begin{lem}
If \(H \subseteq \omega\) is infinite and decides all of its finite subsets, then either \(H\)
accepts \(\emptyset\) or else there is an infinite \(A \subseteq H\) which rejects all of its finite subsets.
\end{lem}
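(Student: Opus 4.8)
The plan is to assume that $H$ does not accept $\emptyset$ and then manufacture the required $A$ by a diagonalization carried out inside $H$. First I would isolate two elementary facts. One is monotonicity of rejection: if $A' \subseteq A$ and $A$ rejects $a$, then $A'$ rejects $a$, since every infinite subset of $A'$ is also an infinite subset of $A$. The other is that, because $H$ decides every one of its finite subsets, for a finite $a \subseteq H$ the assertions ``$H$ does not reject $a$'' and ``$H$ accepts $a$'' are equivalent (and they are mutually exclusive, since $H$ is an infinite subset of itself).

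Call a finite $a \subseteq H$ \emph{good} if $H$ rejects it. Since $H$ decides $\emptyset$ but, by our assumption, does not accept it, $\emptyset$ is good. Applying the previous lemma with $H$ in the role of $A$: whenever $a$ is good, the set $\{k \in H : H \text{ accepts } a \cup \{k\}\}$ is finite, and by the remark above this is exactly the set of $k \in H$ for which $a \cup \{k\}$ fails to be good. So each good set has only finitely many one-point extensions within $H$ that are not themselves good.

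Next I would construct $m_0 < m_1 < \cdots$ in $H$ by recursion so that for every $k$ every subset of $\{m_0, \ldots, m_k\}$ is good. Given $m_0 < \cdots < m_{k-1}$ with every subset of $\{m_0, \ldots, m_{k-1}\}$ good, the subsets of $\{m_0, \ldots, m_k\}$ not already handled are precisely the sets $a' \cup \{m_k\}$ with $a' \subseteq \{m_0, \ldots, m_{k-1}\}$; for each of these finitely many $a'$, all but finitely many $m_k \in H$ make $a' \cup \{m_k\}$ good, so all but finitely many $m_k \in H$ succeed for all of them simultaneously, and I may in addition require $m_k > m_{k-1}$. Setting $A := \{m_k : k \in \omega\}$, any finite subset of $A$ is contained in some $\{m_0, \ldots, m_k\}$ and is therefore good, i.e. rejected by $H$; since $A \subseteq H$, monotonicity of rejection gives that $A$ rejects each of its finite subsets, which is the conclusion of the lemma.

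I do not expect a genuine obstacle here. The only points requiring attention are the bookkeeping in the recursion --- confirming that at stage $k$ only finitely many sets need to be fixed and that only finitely many candidates $m_k$ are thereby excluded from an infinite set --- and the observation that ``not rejected by $H$'' coincides with ``accepted by $H$'', which relies on $H$ deciding all of its finite subsets rather than on any property of a smaller set.
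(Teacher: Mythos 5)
Your argument is correct and is essentially the proof given in the paper: both recursively choose $m_0 < m_1 < \cdots$ in $H$ so that $H$ rejects every subset of each initial block, using the previous lemma (applied to $H$) plus the fact that $H$ decides all of its finite subsets to see that only finitely many candidates are excluded at each stage. Your explicit remark that rejection passes from $H$ down to $A \subseteq H$ is the (implicit) final step of the paper's proof as well, so nothing is missing.
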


\begin{proof}
If \(H\) rejects the emptyset and decides all of its finite subsets, then
recursively construct \(n_0 < n_1 < \ldots\) in \(H\) so that for each \(k\), 
\(H\) rejects all subsets of \(\{n_i : i < k\}\).
The choice of the next \(n_k\) is always possible since
\[
\{n : \exists a \subseteq \{n_i : i < k\} (H \textrm{ accepts } a \cup \{n\})\}
\]
is finite.
The set \(A := \{n_i : i < \infty\}\) now rejects all of its finite subsets.
\end{proof}

In order to finish the proof of Theorem \ref{GNW},
observe that if \(H\) accepts the emptyset, then every infinite subset
of \(H\) contains an initial segment in \(\Fcal\).
By the previous lemmas, it therefore suffices to show that if \(A\) is an infinite set which
rejects all of its finite subsets, then no element of \(\Fcal\) is a subset of \(A\).
If \(a \in \Fcal\) with \(a \subseteq A\), then \(B := A \setminus \{0,\ldots,\max (a)\}\) would
accept \(a\), which is impossible.
This finishes the proof of Theorem \ref{GNW}.

\section{The formalism of the forcing relation}

\label{formalism:sec}

In this section we will develop the forcing relation and the forcing language axiomatically, treating the
notion of a \emph{$\Qcal$-name} and the \emph{forcing relation $\forces$} as undefined concepts;
the definitions are postponed until Section \ref{semantics:sec}.
The advantage of this approach is that it emphasizes the aspects of the formalism which
are actually used in practice.

Let $\Qcal$ be a forcing, fixed for the duration of the section.
As we stated earlier, one can view $\Qcal$ as providing the collection of events of positive measure with
respect to some abstract notion of randomness.
In this analogy, a $\Qcal$-name would correspond to a set-valued random variable.
It is conventional to denote $\Qcal$-names by letters with a ``dot'' over them.

There are two examples of $\Qcal$-names which deserve special mention.
The first is the ``check name'':
for each set $x$, there is a $\Qcal$-name $\check x$.
This corresponds to a random variable which is constant --- it does not depend on the outcome.
The other is the $\Qcal$-name $\dot G$ for the generic filter;
this corresponds to the random variable representing the outcome of the random experiment.

The \emph{forcing language} associated to $\Qcal$
is the collection of all first order formulas in the language of set theory augmented
by adding a constant symbol for each $\Qcal$-name.
If $q$ is in $\Qcal$ and $\phi$ is a sentence in the forcing language, then informally
the forcing relation $q \forces \phi$ asserts that
if the event corresponding to $q$ occurs, then \emph{almost surely} $\phi$ will be true.
In the absence of the definitions of ``$\Qcal$-name'' and ``$\forces$,'' the following
properties can be regarded as axioms which govern the behavior of these primitive concepts.
They can be proved from the definitions of $\Qcal$-names and the forcing relation which will be given
in Section \ref{semantics:sec}.

\begin{property} \label{check_names}
For any $p \in \Qcal$ and any sets $x$ and $y$:
\begin{enumerate}[\indent a.]

\item
$p \forces \check x \in \check y$ if and only if $x \in y$;

\item
$p \forces \check x = \check y$ if and only if $x = y$;

\end{enumerate}
\end{property}

\begin{property} \label{filter_name}
For $p, q \in \Qcal$, $p \forces \check q \in \dot G$ if and only if whenever $r \in \Qcal$ is compatible with $p$,
$r$ is compatible with $q$.
\end{property}

If $\Qcal$ is separative, then this property takes a simpler form: $p \forces \check q \in \dot G$ if and only
if $p \leq q$.

\begin{property} \label{ordinal_names}
If $\dot \alpha$ is a $\Qcal$-name, $p \in \Qcal$, and
$p \forces \dot \alpha \textrm{ is an ordinal}$, then there is an ordinal $\beta$ such that
$p \forces \dot \alpha \in \check \beta$.
\end{property}

It is useful to define the following terminology:
if there is a $z$ such that $q \forces \dot y = \check z$,
then we say that $q$ \emph{decides} $\dot y$ (to be $z$).
Similarly, if $p \forces \phi$ or $p \forces \neg \phi$, then we say that $p$ \emph{decides} $\phi$.

\begin{property} \label{decide}
For any $x$, any $\Qcal$-name $\dot y$, and $p \in \Qcal$, 
if $p \forces \dot y \in \check x$, then
there is a $q \leq p$ which decides $\dot y$.
\end{property}

\begin{property}
\label{collection_for_names}
If $\dot x$ is a $\Qcal$-name and $p \in \Qcal$, then the collection of all $\Qcal$-names $\dot y$ such that
$p \forces \dot y \in \dot x$ forms a set and the collection of all $\Qcal$-names $\dot y$ such that
$p \forces \dot y = \dot x$ forms a set.

\end{property}

\begin{remark}
Unlike the other properties, this one is dependent on
the definition of $\Qcal$-name which we will give in the next section.
\end{remark}

\begin{property} \label{negation_monotone}
If $p \in \Qcal$ and $\phi$ is a formula in the forcing language, then
$p \forces \neg \phi$ if and only if there is no $q \leq p$ such that $q \forces \phi$.
\end{property}

Observe that this property implies that
if $p \forces \phi$ and $q \leq p$, then $q \forces \phi$.
Property \ref{negation_monotone} can be seen as providing an organizational tool in the proof of Theorem \ref{GNW}:
if $\Qcal := ([\omega]^\omega,\subseteq)$ then
an $A \in [\omega]^\omega$ accepts $a$ if and only if $A$ forces that every element of the generic filter contains
an infinite set with an initial part in $\check \Fcal$.
An infinite $A$ rejects $a$ if it forces the negation of this assertion.

\begin{property} \label{completeness_of_names}
If $p \in \Qcal$, then $p \forces \exists v \phi$ if and only if there is a $\Qcal$-name $\dot x$ such that
$p \forces  \phi [\dot x/v]$. 
\end{property}

\begin{property} \label{ZFC_forced}
For any $q \in \Qcal$, the collection of sentences in the forcing
language which are forced by $q$ contains the ZFC axioms, the axioms of first order logic,
and is closed under \emph{modus ponens}.
Moreover, if the axioms of ZFC are consistent, then so are the sentences forced by $q$. 
\end{property}

If $\one \forces_\Qcal \phi$, then we will sometimes say that ``$\Qcal$ forces $\phi$'' or, if $\Qcal$ is clear from the context,
that ``$\phi$ is forced.''
Similarly, we will write ``$\dot x$ is a $\Qcal$-name for...'' to mean
``$\dot x$ is a $\Qcal$-name and $\Qcal$ forces that $\dot x$ is...''.

In order to demonstrate how these properties can be used, we will prove
the following useful propositions.

\begin{prop} \label{check_quant}
Suppose that $x$ is a set and $\phi(v)$ is a formula in the forcing language.
If for all $y \in x$,
$p \forces \phi(\check y)$,
then $p \forces \forall y \in \check x \phi (y)$.
\end{prop}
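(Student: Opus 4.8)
The plan is to argue by contraposition using Property~\ref{negation_monotone}. Suppose that $p \not\forces \forall y \in \check x\, \phi(y)$. Unwinding the bounded quantifier, the sentence $\forall y \in \check x\, \phi(y)$ is by definition $\forall y\, (y \in \check x \to \phi(y))$, whose negation is logically equivalent to $\exists y\, (y \in \check x \wedge \neg \phi(y))$. Since by Property~\ref{ZFC_forced} the forcing relation respects first order logic and is closed under modus ponens, and since (by Property~\ref{negation_monotone}) $p \not\forces \forall y \in \check x\, \phi(y)$ gives some $q \leq p$ with $q \forces \neg\forall y \in \check x\, \phi(y)$, we get $q \forces \exists y\, (y \in \check x \wedge \neg \phi(y))$.

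Now I would apply Property~\ref{completeness_of_names} to extract a $\Qcal$-name $\dot y$ with $q \forces \dot y \in \check x \wedge \neg\phi(\dot y)$, hence in particular $q \forces \dot y \in \check x$. The next step is the crux: I want to locate an actual element $z \in x$ that $\dot y$ is ``equal to'' below some extension of $q$. Applying Property~\ref{decide} to $q \forces \dot y \in \check x$ yields an $r \leq q$ and a set $z$ with $r \forces \dot y = \check z$. I then need to know $z \in x$: this follows because $r \forces \dot y \in \check x$ (by monotonicity, the remark after Property~\ref{negation_monotone}) and $r \forces \dot y = \check z$, so $r \forces \check z \in \check x$, and Property~\ref{check_names}(a) forces $z \in x$.

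Finally, since $r \leq q$ we also have $r \forces \neg\phi(\dot y)$, and combined with $r \forces \dot y = \check z$ and the logical axioms for equality (available via Property~\ref{ZFC_forced}) we conclude $r \forces \neg\phi(\check z)$. But $z \in x$, so the hypothesis gives $p \forces \phi(\check z)$, and as $r \leq q \leq p$, monotonicity gives $r \forces \phi(\check z)$. Now Property~\ref{negation_monotone} (applied with $\phi(\check z)$ in the role of $\phi$) is violated: we cannot have both $r \forces \phi(\check z)$ and some extension of $r$, namely $r$ itself, forcing $\neg$ of it unless $r \forces \phi(\check z)$ and $r \not\forces \neg\phi(\check z)$ — a contradiction. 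This contradiction shows $p \forces \forall y \in \check x\, \phi(y)$.

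The main obstacle I anticipate is the bookkeeping around the definition of bounded quantification and making sure each logical manipulation (negating the implication, pulling out the witness name, handling the equality substitution) is justified purely by Properties~\ref{check_names}, \ref{decide}, \ref{negation_monotone}, \ref{completeness_of_names}, and \ref{ZFC_forced}, without tacitly invoking the semantics from Section~\ref{semantics:sec}. In particular one should be careful that $\neg\phi(\check y)$ really is a sentence in the forcing language so that the closure properties apply, and that substituting $\check z$ for $\dot y$ inside $\phi$ is licensed by the equality axioms that Property~\ref{ZFC_forced} guarantees are forced.
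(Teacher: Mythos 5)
Your proof is correct and follows essentially the same route as the paper: contraposition, extracting $q \forces \neg\forall y \in \check x\,\phi(y)$ via Property~\ref{negation_monotone}, converting to an existential by Property~\ref{ZFC_forced}, pulling out a witness name by Property~\ref{completeness_of_names}, deciding it via Property~\ref{decide}, and transferring $\neg\phi$ to $\check z$ by the forced equality axioms. Your explicit justification that $z \in x$ (via $r \forces \check z \in \check x$ and Property~\ref{check_names}) fills in a small step the paper states without comment.
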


\begin{proof}
We will prove the contrapositive.
Toward this end, suppose that $p$ does not force $\forall y \in \check x \phi(y)$.
It follows from Property \ref{negation_monotone}
there is a $q \leq p$ such that $q \forces \neg \forall y \in \check x \phi(y)$.
By Property \ref{ZFC_forced}, this is equivalent to $q \forces \exists y \in \check x \neg \phi(y)$.
By Property \ref{completeness_of_names},
there is a $\Qcal$-name $\dot y$ such that
\[
q \forces (\dot y \in \check x) \land (\neg \phi( \dot y)).
\]
By Property \ref{ZFC_forced} $q \forces \dot y \in \check x$ and
therefore by Property \ref{decide}, there is a $r \leq q$ and a $z$ in $x$ such that
$r \forces \dot y = \check z$.
But now, by Property \ref{ZFC_forced}, $r \forces \neg \phi(\check z)$ and hence
by Property \ref{negation_monotone}
$p$ does not force $\phi(\check z)$.
\end{proof}

\begin{prop} \label{bounded_abs}
Suppose that $\phi(v_1,\ldots,v_n)$ is a formula in the language of set theory
with only bounded quantification.
If $x_1,\ldots,x_n$ are sets 
and $\phi(x_1,\ldots,x_n)$ is true, then $\one \forces \phi(\check x_1,\ldots,\check x_n)$.
\end{prop}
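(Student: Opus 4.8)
The plan is to prove Proposition \ref{bounded_abs} by induction on the structure of the formula $\phi$, using the analogous ground-model fact (Proposition 2, absoluteness of bounded formulas for transitive sets) as a guide for which cases arise. The base case is atomic formulas, which is exactly Property \ref{check_names}: if $x_i \in x_j$ is true then $\one \forces \check x_i \in \check x_j$, and similarly for equality; and if the atomic statement is false, then $x_i \notin x_j$, so $\one \forces \neg(\check x_i \in \check x_j)$ by Property \ref{check_names} again, hence no condition forces $\check x_i \in \check x_j$ — but we only need the ``true implies forced'' direction, so this is immediate.

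For the inductive step I would handle the connectives and the bounded quantifiers. Conjunction and the case $\phi = \psi_1 \wedge \psi_2$ is routine from Property \ref{ZFC_forced} (closure under the propositional logic axioms and modus ponens). For negation $\phi = \neg\psi$: if $\neg\psi(x_1,\dots,x_n)$ is true, then $\psi(x_1,\dots,x_n)$ is false; I want to conclude $\one \forces \neg\psi(\check x_1,\dots,\check x_n)$, which by Property \ref{negation_monotone} means showing no $q \leq \one$ forces $\psi(\check x_1,\dots,\check x_n)$. This is where the induction hypothesis has to be applied carefully — I cannot directly invoke it because I only know $\psi$ is false, not $\neg\psi$; instead I should set up the induction so that the statement being proved simultaneously asserts ``if $\phi(\vec x)$ is true then $\one \forces \phi(\check{\vec x})$'' and ``if $\phi(\vec x)$ is false then $\one \forces \neg\phi(\check{\vec x})$'' (equivalently no condition forces $\phi(\check{\vec x})$). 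With that strengthened induction hypothesis the negation case is symmetric and clean, and the other connectives follow since $\{\neg, \wedge\}$ (or $\{\neg,\vee,\to\}$) suffice.

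The main work is the bounded quantifier case, say $\phi = \forall y \in v_k\, \psi(y, v_1,\dots,v_n)$, which is exactly where Proposition \ref{check_quant} is designed to be used. Suppose $\forall y \in x_k\, \psi(y, x_1,\dots,x_n)$ holds; then for each $y \in x_k$ the statement $\psi(y, x_1,\dots,x_n)$ is true, so by the induction hypothesis $\one \forces \psi(\check y, \check x_1,\dots,\check x_n)$. Proposition \ref{check_quant} then gives $\one \forces \forall y \in \check x_k\, \psi(y,\check x_1,\dots,\check x_n)$, as desired. The dual case $\phi = \exists y \in v_k\,\psi$ is even easier: pick a witness $y \in x_k$ with $\psi(y,\vec x)$ true, apply the induction hypothesis to get $\one \forces \psi(\check y,\check{\vec x})$, and then use Property \ref{ZFC_forced} (the logical axiom $\psi(\check y) \wedge \check y \in \check x_k \to \exists y \in \check x_k\, \psi(y)$, together with $\one \forces \check y \in \check x_k$ from Property \ref{check_names}) to conclude. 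For the ``false'' half of the strengthened hypothesis on the universal quantifier, I negate to an existential of a false-instance and run the existential argument. I expect the only real subtlety is bookkeeping: making sure the strengthened two-sided induction hypothesis is stated up front so the negation and universal cases go through, and being a little careful that substitution of check-names commutes with the formula decomposition (which is purely syntactic). No step should require genuinely hard forcing machinery beyond the listed Properties and Proposition \ref{check_quant}.
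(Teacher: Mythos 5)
Your proposal is correct and follows essentially the same route as the paper: induction on the structure of $\phi$, with atomic formulas handled by Property \ref{check_names}, propositional connectives by Property \ref{ZFC_forced}, and the bounded universal quantifier by Proposition \ref{check_quant}. The only place you go beyond the paper's terse write-up is in strengthening the induction hypothesis to the two-sided statement (true implies forced, false implies the negation is forced), which is exactly the bookkeeping needed to make the negation case rigorous that the paper leaves implicit.
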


\begin{proof}
The proof is by induction on the length of $\phi$.
If $\phi$ is atomic, then this follows from Property \ref{check_names}.
If $\phi$ is a conjunct, disjunct, or a negation,
then the proposition follows from Property \ref{ZFC_forced}
and the induction hypothesis.
Finally, suppose $\phi(v_1,\ldots,v_n)$ is of the form $\forall w \in v_n \psi(v_1,\ldots,v_n,w)$.
If $\forall w \psi(x_1,\ldots,x_n,w)$ is true, then for each $w$,
$\psi(x_1,\ldots,x_n,w)$ is true.
By our induction hypothesis, $\one \forces \phi(\check x_1,\ldots,\check x_n,\check w)$ for each
$w$.
By Proposition \ref{check_quant}, it follows that
$\one \forces \forall w \in \check x_n \psi(\check x_1,\ldots,\check x_n,w)$.
\end{proof}

\begin{prop} \label{WF_abs}
Suppose that $T$ is a set consisting of finite length sequences, closed under taking initial segments.
If there is a forcing $\Qcal$ and some 
$q \in \Qcal$ forces ``there is an infinite sequence $\sigma$, all of whose
finite initial parts are in $T$,'' then such a sequence $\sigma$ exists.
\end{prop}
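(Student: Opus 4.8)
The plan is to reduce the statement to a straightforward absoluteness argument using well-foundedness. Suppose no such sequence $\sigma$ exists; we want to derive a contradiction. The assertion ``there is no infinite sequence all of whose finite initial parts are in $T$'' is exactly the statement that the tree $(T, \supseteq)$ (ordered by extension) is well founded. Well-foundedness of a definable relation is a $\Pi_1$-type statement, but it is not itself expressible with only bounded quantification, so Proposition \ref{bounded_abs} does not apply directly. Instead I would use the standard characterization: a tree of finite sequences is well founded if and only if it admits a \emph{rank function}, i.e.\ an ordinal-valued function $\rho$ on $T$ such that $\rho(s) > \rho(t)$ whenever $t$ properly extends $s$. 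Such a rank function exists in the ground model precisely when $\sigma$ fails to exist.

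First I would fix a rank function $\rho \colon T \to \beta$ for some ordinal $\beta$, with $\rho(s) > \rho(t)$ whenever $s \subsetneq t$ in $T$ (this uses the ground-model assumption that no infinite branch exists). Next, working with the hypothetical $\Qcal$-name $\dot\sigma$ supplied by Property \ref{completeness_of_names} applied to the forced existential, I would argue inside the forcing: $q$ forces that $\dot\sigma \restriction n \in \check T$ for every $n \in \check\omega$, and hence by Proposition \ref{bounded_abs} together with Proposition \ref{check_quant} that $\check\rho(\dot\sigma\restriction n)$ is a well-defined ordinal with $\check\rho(\dot\sigma\restriction(n+1)) < \check\rho(\dot\sigma\restriction n)$ for all $n$. (The key point is that the statement ``$\rho$ is a function with domain $T$ and $\rho(s) > \rho(t)$ whenever $t$ extends $s$ by one term'' is expressible with bounded quantification relative to the parameters $T$, $\rho$, and the successor operation on finite sequences, so it transfers upward by Proposition \ref{bounded_abs}.) Thus $q$ forces the existence of an infinite strictly decreasing sequence of ordinals. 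But ``the ordinals are well founded'' is a theorem of ZFC, and by Property \ref{ZFC_forced} it is forced by $q$; this contradicts the consistency clause of Property \ref{ZFC_forced}, or more directly, one extracts an actual infinite descending sequence of ground-model ordinals by repeatedly applying Property \ref{decide} and Property \ref{ordinal_names} to the names $\check\rho(\dot\sigma\restriction n)$, which is impossible.

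The main obstacle I anticipate is the bookkeeping needed to see that the relevant statements about $\rho$, $T$, and the shift/successor operations on finite sequences are genuinely bounded-quantifier formulas (with the right parameters), so that Proposition \ref{bounded_abs} legitimately applies — in particular one must be careful that quantifiers ranging over ``finite sequences'' or over ``natural numbers'' are bounded by the explicitly given parameter sets rather than left unbounded. A cleaner route, avoiding ranks entirely, is to instead invoke Property \ref{ZFC_forced} directly: since $T$ has no infinite branch in $V$, the sentence ``$(\check T,\supseteq)$ is well founded'' — phrased as ``there is an ordinal-valued rank function on $\check T$'', an absolute-enough consequence — is a theorem of ZFC about the specific set $\check T$, hence forced by $q$, contradicting $q \forces \exists\sigma\,(\forall n\ \sigma\restriction n \in \check T)$. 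I would present whichever version keeps the verification of boundedness shortest, most likely the explicit rank-function argument since it makes the use of the earlier Properties most transparent and requires no appeal to the consistency clause.
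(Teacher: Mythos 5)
Your proposal is correct and follows essentially the same route as the paper: argue by contraposition, fix a ground-model rank function $\rho$ on $T$, note that ``$\rho$ is a strictly decreasing map from $T$ into the ordinals'' involves only bounded quantification and so is forced by Proposition \ref{bounded_abs}, and then derive a contradiction from the forced existence of an infinite branch. The extra detail you supply about how to realize the contradiction (via the consistency clause of Property \ref{ZFC_forced}, or by extracting a descending sequence of ground-model ordinals with Property \ref{decide}) simply makes explicit what the paper leaves implicit.
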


\begin{proof}
If no such sequence $\sigma$ exists, then there is a function $\rho$
from $T$ into the ordinals such that if
$s$ is a proper initial segment of $t$, then $\rho(t) \in \rho(s)$.
Such a $\rho$ certifies the nonexistence of such a $\sigma$ since such a $\sigma$ would define
a strictly decreasing infinite sequence of ordinals.
Observe that the assertion that $\rho$ is a strictly decreasing map from $T$ into the ordinals
is a statement about $\rho$ and $T$ involving only bounded quantification.
By Proposition \ref{bounded_abs}, this statement is forced by every forcing $\Qcal$.
\end{proof}

There is a special class of forcings for which there is a more  
conceptual picture of the forcing relation.
We begin by stating a general fact about forcings.
\begin{thm} \label{cBA}
For every forcing $\Qcal$, $\Qcal$ is isomorphic to a dense suborder of the positive
elements of a complete Boolean algebra.
\end{thm}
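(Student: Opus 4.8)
The plan is to realize $\Qcal$ inside its regular open algebra. First I would pass to the separative quotient, which is harmless since a forcing and its separative quotient have the same incompatibility structure and hence the same generic extensions; so assume $\Qcal$ is separative. Next I would equip $\Qcal$ with the topology whose open sets are the downward-closed subsets of $\Qcal$, and consider the complete Boolean algebra $\mathbb{B}$ of regular open sets of this space (recall a set $U$ is regular open if $U = \operatorname{int}(\cl(U))$, and that the regular open sets of any topological space form a complete Boolean algebra under $U \meet V = U \cap V$, $U \join V = \operatorname{int}(\cl(U \cup V))$, and complementation $\neg U = \operatorname{int}(X \setminus U)$). The map $e \colon \Qcal \to \mathbb{B}$ should send $p$ to the regular open set $\operatorname{int}(\cl(\{q \in \Qcal : q \leq p\}))$; since $\{q : q \leq p\}$ is already open, this is just the regularization of the basic open cone below $p$.

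Then the main steps are: (1) show $e$ is order-preserving and, using separativity, order-reflecting — the key point being that $p \not\leq q$ yields, by separativity, an $r \leq p$ incompatible with $q$, so the cone below $r$ witnesses that $e(p) \not\subseteq e(q)$; in particular $e$ is injective. (2) Show $e$ preserves incompatibility in both directions: $p \meet q$ (as conditions) being compatible corresponds exactly to the two cones having nonempty intersection, which corresponds to $e(p) \cap e(q) \neq \zero$ in $\mathbb{B}$. (3) Show the image $e[\Qcal]$ is dense in $\mathbb{B}^+$: given any nonzero regular open $U$, pick any $p \in U$ (possible since $U$ is nonempty open), and check $e(p) \subseteq U$ because the cone below $p$ is contained in the open set $U$, hence so is its regularization. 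Assembling (1)–(3) gives that $e$ is an isomorphism of $\Qcal$ onto a dense suborder of $\mathbb{B}^+$.

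The part requiring the most care is the interaction between separativity and the regular-open operation — specifically verifying that for separative $\Qcal$ the regularized cone $e(p)$ is not much larger than the raw cone $\{q : q \leq p\}$, so that order-reflection and incompatibility-reflection go through. Concretely, the claim one needs is: if $q \notin e(p)$ then there is $r \leq q$ incompatible with $p$; this is the topological restatement of separativity and is what makes the cones "honest" representatives. I would isolate this as a small lemma before checking (1) and (2). Everything else — that regular open sets form a complete Boolean algebra, that downward-closed sets form a topology, that density transfers — is routine and can be cited or dispatched in a line.

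One remark worth including: this theorem is the bridge to the "conceptual picture" alluded to just before the statement, since in a complete Boolean algebra $\mathbb{B}$ one can define the Boolean value $\truth{\phi} \in \mathbb{B}$ of a forcing-language sentence $\phi$ directly, and then $q \forces \phi$ becomes simply $e(q) \leq \truth{\phi}$; but for the proof of Theorem~\ref{cBA} itself only the order-theoretic embedding is needed.
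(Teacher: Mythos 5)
The paper does not actually prove Theorem~\ref{cBA}; it is quoted as a standard fact, so there is no internal argument to compare against. Your proposal is the standard and correct construction: regularize the cones inside the regular open algebra of the order (downward-closure) topology on the separative quotient, and check order-preservation, order- and incompatibility-reflection, and density. All three steps go through exactly as you sketch: with $e(p)=\operatorname{int}(\cl(\{q: q\leq p\}))$ one computes that $q\in\cl(\{r:r\leq p\})$ iff $q$ is compatible with $p$, so $e(p)=\{q: \text{every } r\leq q \text{ is compatible with } p\}$; separativity then says precisely that this set equals the cone below $p$, which gives (1) and makes (2) immediate, and (3) is the monotonicity of regularization applied inside a regular open $U$.

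Two small refinements. First, in your closing paragraph you identify the needed lemma as ``if $q\notin e(p)$ then some $r\leq q$ is incompatible with $p$'' and call it the topological restatement of separativity; in fact that implication is automatic from the computation of $\operatorname{int}(\cl(\cdot))$ in this topology and holds for any forcing. What separativity buys is the converse direction, $q\not\leq p\Rightarrow q\notin e(p)$ (equivalently, the cone below $p$ is already regular open), and that is exactly what your step (1) correctly uses --- so the proof is fine, only the label on the lemma is off. Second, the reduction to the separative quotient is not merely cosmetic: any dense suborder of the positive elements of a Boolean algebra is automatically a separative partial order, so the theorem as literally stated can only hold up to the identification of $\Qcal$ with its separative quotient, which is the convention the paper announces in Section~\ref{what_is:sec}. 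Making that dependence explicit, as you do, is the right move; it would be worth one sentence noting \emph{why} the reduction is forced rather than optional.
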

\noindent
Here we recall that a Boolean algebra is \emph{complete} if every subset has a least upper
bound.
A typical example of a complete Boolean algebra is the algebra of measurable subsets
of $[0,1]$ modulo the ideal of measure zero sets.
The algebra of Borel subsets of $[0,1]$ modulo the ideal of first category sets is similarly a complete Boolean algebra.
Random and Cohen forcing, respectively, are isomorphic to dense suborders of the positive elements of these
complete Boolean algebras.

Suppose now that $\Qcal$ is the positive elements of some complete Boolean algebra $\Bcal$.
If $\phi$ is a formula in the forcing language, then define the \emph{truth value}
$\truth{\phi}$
of $\phi$ to be the least upper bound of all $b \in \Bcal$ such that $b \forces \phi$.
Observe that if $a \leq \truth{\phi}$, then $a$ cannot force
$\neg \phi$.
Hence $\truth{\phi}$ forces $\phi$.
The rules which govern the logical connectives now take a particularly nice form:
\[
\truth{\neg \phi} = \truth{\phi}^{\C}
\qquad
\truth{\phi \land \psi} = \truth{\phi} \land \truth{\psi}
\qquad
\truth{\phi \lor \psi} = \truth{\phi} \lor \truth{\psi}
\]
\[
\truth{\forall v \phi} = \bigmeet_{\dot x} \truth{\phi[\dot x/v]}
\qquad
\truth{\exists v \phi} = \bigjoin_{\dot x} \truth{\phi[\dot x/v]}
\]
Notice that while $\dot x$ ranges over all $\Qcal$-names in the last equations
--- a proper class --- the collection of all possible values of
$\truth{\phi[\dot x/v]}$ is a set and therefore the last items are
meaningful.

In spite of the usefulness of complete Boolean algebras
in understanding forcing and also in some of
the development of the abstract theory of forcing,
forcings of interest rarely present themselves as complete Boolean algebras (the notable exceptions
being Cohen and random forcing).
While Theorem \ref{cBA} allows us to represent any forcing inside a complete
Boolean algebra, defining forcing strictly in terms of complete
Boolean algebras would prove cumbersome in practice.

\section{Names, interpretation, and semantics}

\label{semantics:sec}

In this section we will turn to the task of giving a
formal definition of what is meant by a \emph{$\Qcal$-name} and
$q \forces \phi$.
This will in turn be used to give a semantic perspective of forcing.
The definitions in this section are not essential for understanding
most forcing arguments and the reader may wish to skip this section on their first reading
of the material.
Others, however, may wish to have a tangible model of the axioms.

Before proceeding, we need to recall the notion of the \emph{rank} of a set.
If $x$ is a set, then the \emph{rank} of $x$ is defined recursively:
the rank of the emptyset is $0$ and the rank of $x$ is the least ordinal which is strictly greater than the ranks of
its elements.
This is always a well defined quantity and it will sometimes be necessary to give definitions by recursion on rank.
We recall that formally an ordered pair $(x,y)$ is defined to be $\{x,\{x,y\}\}$;
this is only relevant in that the rank of $(x,y)$ is greater than the ranks of
either $x$ or $y$.

Now let $\Qcal$ be a forcing, fixed for the duration of the section.
If $q \in \Qcal$, let $\Qcal_q$ denote the forcing
$(\{r \in \Qcal : r  \leq q\},\leq)$.

\begin{defn}[name]
A set $\dot x$ is a \emph{$\Qcal$-name}
if every element of $\dot x$ is of the form $(\dot y,q)$ where
$\dot y$ is a $\Qcal_q$-name and $q$ is in $\Qcal$.
\end{defn}
\noindent
(The requirement that $\dot y$ be a $\Qcal_q$-name is to help ensure that
Property \ref{collection_for_names} is satisfied.)
Notice that this apparently implicit definition is actually a definition by recursion
on \emph{rank}, as discussed in Section \ref{prelim:sec}.
Furthermore, if $\Pcal \subseteq \Qcal$ is a suborder, then any $\Pcal$-name
is also a $\Qcal$-name.

The following provide two important examples of $\Qcal$-names.

\begin{defn}[check names]
If $x$ is a set, $\check x$ is defined recursively by
\[
\{(\check y,\one) : y \in x\}.
\]
\end{defn}

\begin{defn}[name for the generic filter]
$\dot G := \{(\check q,q) : q \in \Qcal\}$.
\end{defn}

As mentioned in the previous section, the notion of a $\Qcal$-name is intended to describe a procedure
for building a new set from a given filter $G \subseteq \Qcal$.
This procedure is formally described as follows.
 
\begin{defn}[interpretation]
If $G$ is any filter and $\dot x$ is any $\Qcal$-name, define $\Int{\dot x}{G}$ recursively by
\[
\Int{\dot x}{G} := \{\Int{\dot y}{G} : \exists p \in G\ ((\dot y,p) \in \dot x)\}
\]
\end{defn}
\noindent
Again, this is a definition by recursion on rank.
In the analogy with randomness, $\Int{\dot x}{G}$ corresponds to evaluating a random variable
at a given outcome.

The following gives the motivation for the definitions of $\check x$ and $\dot G$.

\begin{prop}
If $H$ is any filter and $x$ is any set,
then $\Int{\check x}{H} = x$.
\end{prop}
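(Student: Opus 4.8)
The plan is to prove $\Int{\check x}{H} = x$ by induction on the rank of $x$, since both the definition of $\check x$ and the definition of the interpretation map $\Int{\cdot}{H}$ are recursions on rank. The statement is to be shown for an arbitrary filter $H \subseteq \Qcal$; note that $H$ is fixed throughout but otherwise unrestricted — in particular, the only feature of $H$ that matters is that it is nonempty, and specifically that $\one \in H$, which holds because $H$ is upward closed and nonempty (every element of $H$ is below $\one$).

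First I would fix $x$ and assume as induction hypothesis that $\Int{\check y}{H} = y$ for every $y$ of rank strictly less than the rank of $x$. By definition, $\check x = \{(\check y, \one) : y \in x\}$, and every such $y$ has rank strictly less than that of $x$, so the induction hypothesis applies to each. Now I would unwind the definition of interpretation:
\[
\Int{\check x}{H} = \{\Int{\dot z}{H} : \exists p \in H\ ((\dot z, p) \in \check x)\}.
\]
The pairs in $\check x$ are exactly the $(\check y, \one)$ for $y \in x$, so a pair $(\dot z, p) \in \check x$ with $p \in H$ exists precisely when $\dot z = \check y$ for some $y \in x$ and $p = \one \in H$. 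Since $\one \in H$ (as $H$ is a nonempty upward-closed subset of $\Qcal$ and $\one$ is the greatest element), this condition is satisfied for every $y \in x$. Hence
\[
\Int{\check x}{H} = \{\Int{\check y}{H} : y \in x\} = \{y : y \in x\} = x,
\]
where the middle equality is the induction hypothesis.

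The main obstacle — really the only subtlety — is the observation that $\one \in H$, which is what guarantees that no element $y \in x$ gets "dropped" from the interpretation; without some element of the second coordinate lying in $H$, the corresponding $\Int{\check y}{H}$ would not be enumerated. This is immediate from the definition of a filter, but it is worth flagging because it is the one place where a hypothesis on $H$ is used. Everything else is a routine unwinding of the two recursive definitions, with the rank bookkeeping handled by the fact that each $\check y$ appearing in $\check x$ has $y$ of smaller rank than $x$, so the recursion is well-founded and the induction goes through.
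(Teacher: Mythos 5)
Your proof is correct and is exactly the standard argument the paper leaves to the reader (it states this proposition without proof): induction on the rank of $x$, unwinding the definitions of $\check x$ and $\Int{\cdot}{H}$, with the only non-trivial point being that $\one \in H$ for every filter $H$, which you identify and justify correctly. Nothing to add.
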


\begin{prop}
If $H$ is any filter, then $\Int{\dot G}{H} = H$.
\end{prop}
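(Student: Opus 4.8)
The plan is to unwind the recursive definition of interpretation applied to $\dot G = \{(\check q, q) : q \in \Qcal\}$ and check directly that $\Int{\dot G}{H}$ is the set $H$. By the definition of interpretation,
\[
\Int{\dot G}{H} = \{\Int{\check q}{H} : \exists p \in H\ ((\check q, p) \in \dot G)\}.
\]
Now the pairs in $\dot G$ are exactly the $(\check q, q)$ for $q \in \Qcal$, so the condition ``$\exists p \in H$ with $(\check q, p) \in \dot G$'' holds precisely when $q$ itself lies in $H$ (the only candidate for $p$ is $q$). Thus $\Int{\dot G}{H} = \{\Int{\check q}{H} : q \in H\}$. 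By the previous proposition, $\Int{\check q}{H} = q$ for every $q$, so $\Int{\dot G}{H} = \{q : q \in H\} = H$.

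The one point that deserves a word of care is the quantifier ``$\exists p \in H$'': it does not force $p = q$ a priori, since in principle some pair $(\check q, p)$ with $p \ne q$ could belong to $\dot G$. But by inspection of the definition of $\dot G$, the only pair whose first coordinate is $\check q$ is $(\check q, q)$ — here I am using that the map $x \mapsto \check x$ is injective, which is immediate from the recursive definition of $\check x$ and an easy induction on rank. So ``$(\check q, p) \in \dot G$'' is equivalent to ``$p = q$,'' and the existential over $p \in H$ collapses to the condition $q \in H$.

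I expect no real obstacle here; the proof is a direct computation from the definitions, and the only subtlety — that we are not double-counting names or conflating $\Int{\check q}{H}$ with the name $\check q$ — is handled by the preceding proposition on check names and the injectivity of the check operation. One could present the argument as a single chain of equalities once these remarks are in place.
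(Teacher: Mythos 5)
Your computation is correct, and since the paper states this proposition without proof, your direct unwinding of the definitions of $\dot G$ and of interpretation, combined with $\Int{\check q}{H} = q$ from the preceding proposition, is exactly the routine argument intended. Your side remark about $(\check q, p) \in \dot G$ forcing $p = q$ is handled adequately by injectivity of $x \mapsto \check x$ (which, as you note, is an easy induction on rank, and in fact also follows immediately from $\Int{\check x}{H} = x$), so there is no gap.
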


\begin{remark}
It is possible to define \emph{$\Qcal$-name} to just be a synonym for \emph{set}.
The definition of $\Int{\dot x}{G}$ would be left unchanged so that only those elements
of $\dot x$ which are ordered pairs with a second coordinate in $\Qcal$ play any role in
the interpretation.
This alternative has the advantage of brevity and 
much of what is stated in the previous section remains true with this alteration.
On the other hand, it is easily seen that Property \ref{collection_for_names} fails.
For instance those sets which do not contain any ordered pairs forms a proper class and each member
of this class is forced by the trivial condition to be equal to the emptyset.
\end{remark}

We now turn to the formal definition of the forcing relation.
The main complexity of the definition of the forcing relation is tied
up in the formal definition of $p \forces \dot x \in \dot y$.

\begin{defn}[forcing relation: atomic formulae]
If $\Qcal$ is a forcing and $\dot x$ and $\dot y$ are $\Qcal$-names,
then we define the meaning of $p \forces \dot x = \dot y$ and $p \forces \dot x \in \dot y$ as follows
(the definition is by simultaneous recursion on rank):
\begin{enumerate}[\indent a.]

\item $p \forces \dot x = \dot y$ if and only if
for all $\dot z$ and $p' \leq p$,
\[
(p' \forces \dot z \in \dot x) \leftrightarrow (p' \forces \dot z \in \dot y).
\]

\item $p \forces \dot x \in \dot y$ if and only if for every $p' \leq p$ there is
a $p'' \leq p'$ and a $(\dot z,q)$ in $\dot y$ such that
$p'' \leq q$ and $p'' \forces \dot x = \dot z$.

\end{enumerate}
\end{defn}

Notice that the definition of $p \forces \dot x = \dot y$ is precisely to ensure
that the \emph{Axiom of Extensionality} --- which asserts that two sets are equal if they have the same
set of elements --- is forced by any condition.
The definition of the forcing relation for nonatomic formulas
is straightforward and is essentially determined by the properties of the forcing relation
mentioned already in Section \ref{formalism:sec}.

\begin{defn}[forcing relation: logical connectives]
Suppose that $p \in \Qcal$ and $\phi$ and $\psi$ are formulas in the forcing language.
The following are true:
\begin{enumerate}[\indent a.]

\item $p \forces \neg \phi$ if there does not exist a $q \leq p$ such that
$q \forces \phi$.

\item $p \forces \phi \land \psi$ if and only if $p \forces \phi$ and $p \forces \psi$.

\item $p \forces \phi \lor \psi$ if there does not exist a $q \leq p$ such that
$q \forces \neg \phi \land \neg \psi$.

\item $p \forces \forall v \phi$ if and only if for all $\dot x$,
$p \forces \phi[\dot x/v]$.

\item $p \forces \exists v \phi$ if and only if there is an $\dot x$ such that
$p \forces \phi[\dot x/v]$.

\end{enumerate}
\end{defn}
The interested reader may wish to stop and verify that the definitions of
$\forces_\Qcal$ and $\Qcal$-name given in this section satisfy the properties stated in 
Section \ref{formalism:sec}.

The following theorem is one of the fundamental results about forcing.
It connects the syntactic properties of the forcing relation with truth
in generic extensions of models of set theory.
If $M$ is a countable transitive model of ZFC, $\Qcal$ is a forcing in $M$, and
$G \subseteq \Qcal$ is an $M$-generic filter,
define 
\[
M[G] := \{\Int{\dot x}{G} : \dot x \in M \mand \dot x \textrm{ is a $\Qcal$-name}\}.
\] 
In this context, $M[G]$ is the \emph{generic extension} of $M$ by $G$ and $M$ is referred
to as the \emph{ground model}.
Notice that
\[
M = \{\Int{\check x}{G} : x \in M\} \subseteq M[G] \qquad \textrm{and} 
\qquad G = \Int{\dot G}{G} \in M[G].
\]
The following
theorem relates the semantics of forcing (i.e. truth in the generic extension) with
the syntax (i.e. the forcing relation).

\begin{thm}
Suppose that $M$ is a countable transitive model of ZFC and that $\Qcal$
is a forcing which is in $M$.
If $q$ is in $\Qcal$, $\phi(v_1,\ldots,v_n)$ is a formula in the language of set theory,
and $\dot x_1,\ldots,\dot x_n$
are in $M$, then the following are equivalent:
\begin{enumerate}[\indent a.]

\item $q \forces \phi(\dot x_1,\ldots,\dot x_n)$.

\item 
$M[G] \models \phi(\Int{\dot x_1}{G},\ldots,\Int{\dot x_n}{G})$
whenever $G \subseteq \Qcal$ is an $M$-generic filter and $q$ is in $G$.

\end{enumerate}
\end{thm}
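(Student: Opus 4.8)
The plan is to prove the equivalence of (a) and (b) by induction on the structure of $\phi$, the atomic case requiring a subsidiary induction on the ranks of the names. The one structural fact I would settle before starting is that, when the $\Qcal$-names are required to lie in $M$, the recursive clauses of Section~\ref{semantics:sec} defining $p \forces \psi$ can be evaluated inside $M$ --- the only unbounded quantifiers there range over $\Qcal$-names, which for names in $M$ may be restricted to $M$ --- so the relation ``$q \forces \psi(\dot x_1,\dots,\dot x_n)$'' is definable over $M$ with the names as parameters; consequently any collection of conditions so defined is a member of $M$, hence is met by $G$ as soon as it is dense. I would also record three small lemmas: monotonicity, that $p \forces \psi$ and $q \le p$ imply $q \forces \psi$ (the remark after Property~\ref{negation_monotone}); density of deciding sets, that $\{q : q \forces \psi \mor q \forces \neg\psi\}$ is dense for each $\psi$ (given $p$, either $p \forces \neg\psi$, or by the clause for $\neg$ some $q \le p$ forces $\psi$); and $p \forces \neg\neg\psi$ iff $p \forces \psi$, proved by the same induction on $\psi$ (for atomic $\psi$ from the atomic clauses, which are already phrased as statements holding ``densely below $p$'', and for compound $\psi$ from the connective clauses).

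The induction proper proves, for each $\phi$ and all $\dot x_1,\dots,\dot x_n \in M$, the conjunction of two statements: (a$'$) if $p \forces \phi(\dot x_1,\dots,\dot x_n)$ then $M[G] \models \phi(\Int{\dot x_1}{G},\dots,\Int{\dot x_n}{G})$ for every $M$-generic $G$ containing $p$; and (b$'$) if $G$ is $M$-generic and $M[G] \models \phi(\Int{\dot x_1}{G},\dots,\Int{\dot x_n}{G})$ then $p \forces \phi(\dot x_1,\dots,\dot x_n)$ for some $p \in G$. Granting this for all $\phi$, the theorem follows: (a$'$) is precisely (a)$\Rightarrow$(b) (with $p = q$); and for (b)$\Rightarrow$(a), if $q$ does not force $\phi$ then, by the $\neg\neg$ lemma, $q$ does not force $\neg\neg\phi$, so the clause for $\neg$ yields $r \le q$ with $r \forces \neg\phi$, and picking an $M$-generic $G$ containing $r$ (possible since $M$ is countable), (a$'$) applied to $\neg\phi$ gives $M[G] \models \neg\phi$ while $q \in G$ by upward closure, contradicting (b).

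The compound steps are bookkeeping with the clauses, monotonicity, and the density lemma. For $\wedge$, (a$'$) is immediate and (b$'$) follows by taking a common extension in $G$ of two conditions supplied by the inductive hypothesis; once (a$'$) and (b$'$) hold for $\phi$ they follow for $\neg\phi$ (for (a$'$), clipping a condition of $G$ forcing $\phi$ below one forcing $\neg\phi$ contradicts the $\neg$-clause; for (b$'$), $G$ meets the deciding set and the alternative $p \forces \phi$ is excluded by (a$'$) for $\phi$), which also handles $\vee$ since its clause is literally $\forces\neg(\neg\phi \wedge \neg\psi)$; for $\exists v\, \phi$ one uses the clause ``$p \forces \exists v\, \phi$ iff $p \forces \phi[\dot x/v]$ for some $\Qcal$-name $\dot x$'' together with the fact that every element of $M[G]$ is $\Int{\dot x}{G}$ for some $\Qcal$-name $\dot x \in M$, and $\forall$ is dual. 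The substance is in the atomic case, which I would prove for $\dot x = \dot y$ and $\dot x \in \dot y$ simultaneously, by induction on the ranks of the names, following the simultaneous recursion in the definition. In the membership case: if $p \forces \dot x \in \dot y$ and $p \in G$, then genericity applied to the set of conditions witnessing the clause (dense below $p$, and in $M$) yields $p'' \in G$ and $(\dot z,q) \in \dot y$ with $p'' \le q$ and $p'' \forces \dot x = \dot z$; the rank hypothesis for equality gives $\Int{\dot x}{G} = \Int{\dot z}{G}$, and $q \in G$ gives $\Int{\dot z}{G} \in \Int{\dot y}{G}$, so that, $M[G]$ being transitive, $M[G] \models \dot x \in \dot y$; conversely, if $\Int{\dot x}{G} \in \Int{\dot y}{G}$ there is $(\dot z,q) \in \dot y$ with $q \in G$ and $\Int{\dot x}{G} = \Int{\dot z}{G}$, the rank hypothesis gives $r \in G$ with $r \forces \dot x = \dot z$, and any common extension of $r$ and $q$ in $G$ forces $\dot x \in \dot y$ by the clause and monotonicity. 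The equality case goes through the same machinery via its definition in terms of membership --- (a$'$) from the membership case plus extensionality in the transitive model $M[G]$, and (b$'$) by extracting, from a witnessed failure of $\Int{\dot x}{G} = \Int{\dot y}{G}$, a name and a condition of $G$ forcing an asymmetric membership, and meeting the relevant deciding sets inside $G$.

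I expect the atomic case to be the main obstacle: one must set up the nested rank-induction for $=$ and $\in$ so that the genericity of $G$ can always be invoked to convert the ``densely below $p$'' and ``there is $(\dot z,q) \in \dot y$'' clauses into honest membership in $G$, keeping the two interlocked relations synchronized with the recursion on rank. The only other point requiring care is the definability of $\forces$ over $M$ used at the outset --- it is what guarantees the auxiliary dense sets lie in $M$ and are therefore met by $G$ --- which amounts to checking that the recursion of Section~\ref{semantics:sec}, with name-quantifiers restricted to $M$, is absolute between $M$ and $V$ on names lying in $M$.
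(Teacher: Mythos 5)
First, a point of comparison: the paper does not prove this theorem at all --- it is stated as the classical bridge between the syntactic forcing relation and truth in $M[G]$, with the subsequent Remark and the reference to Kunen standing in for a proof --- so there is no argument in the text to measure yours against. What you propose is the standard Definability-plus-Truth-Lemma proof, and its architecture is right: induction on the complexity of $\phi$ with a simultaneous rank induction for the atomic clauses, genericity applied to deciding sets (which lie in $M$ because the relation, with name-quantifiers restricted to $M$, is definable over $M$), and the countability of $M$ to thread an $M$-generic filter through a prescribed condition in the direction (b)$\Rightarrow$(a). One caveat on your framing: the relation in item (a) must indeed be the one computed with names ranging over $M$. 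Your closing claim that this restricted relation is \emph{absolute} between $M$ and $V$ is only correct for the atomic (and bounded-quantifier) clauses; for the $\exists$ and $\forall$ clauses the restricted and unrestricted relations genuinely differ (over the trivial forcing, the unrestricted relation makes $\one$ force ``$\check x$ is countable'' for $x=\omega_1^M$, while $M[G]=M$ disagrees). Fortunately nothing in your argument needs more than definability over $M$, so this is a miscalibrated remark rather than a defect.

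The genuine gap is in your $\neg\neg$ lemma, equivalently the statement that if $\{s : s \forces \psi\}$ is dense below $p$ then $p \forces \psi$. This lemma is the hinge of (b)$\Rightarrow$(a) (and is also quietly used inside the induction, e.g.\ to make the $\forall$ case and the density of the auxiliary sets go through), and for existential $\psi$ it does \emph{not} follow ``from the connective clauses.'' With the paper's clause ``$p \forces \exists v\,\phi$ iff there is a $\Qcal$-name $\dot x$ with $p \forces \phi[\dot x/v]$,'' the conditions in a dense set below $p$ may each force $\phi$ with \emph{different} witness names, and to conclude that $p$ itself forces $\exists v\,\phi$ you must amalgamate them into a single witness: choose (in $M$) a maximal antichain $A$ below $p$ together with names $\dot x_a$ such that $a \forces \phi[\dot x_a/v]$ for $a \in A$, mix these into one name $\dot x$ with $a \forces \dot x = \dot x_a$ for each $a \in A$, and invoke a substitution lemma --- that forcing $\dot x = \dot x_a$ transfers $a \forces \phi[\dot x_a/v]$ to $a \forces \phi[\dot x/v]$ --- which itself requires its own induction. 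This mixing (maximality) lemma is standard and true, so your plan succeeds once it is inserted, but as written that step is a missing idea rather than bookkeeping; without it the claim ``$q$ does not force $\phi$, hence some $r \le q$ forces $\neg\phi$'' is unjustified precisely when $\phi$ has existential quantifiers. The rest of the sketch --- the three preliminary lemmas, the compound cases, and the interlocked rank induction for $=$ and $\in$ --- is sound and matches the standard treatment.
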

\begin{remark}
This theorem can be modified to cover countable transitive models of
sufficiently large finite fragments of ZFC.
In fact this is crucial if one wishes to give a rigorous treatment of the semantics.  
By G\"odel's second incompleteness theorem, ZFC alone does
not prove that there are any set models of ZFC (countable or otherwise).
This is in fact our main reason for de-emphasizing the semantics: while it is formally necessary
to work with models of finite fragments of ZFC, this only introduces technicalities which are
inessential to understanding what can be achieved with forcing.
\end{remark}

While we will generally not work with the semantics of forcing, let us note here
that it is conventional to use $\dot x$ to denote a $\Qcal$-name for
an element $x$ of a generic extension $M[G]$.
While such names are not unique, the choice generally does not matter and
this informal convention affords a great deal of notational economy.

We will now finish this section with some further discussion and notational
conventions concerning names.
It is frequently the case in a forcing construction that one encounters a $\Qcal$-name
for a function $\dot f$ whose domain is forced by some condition to be a ground model set;
that is, for some set $D$,  $p \forces \dom(\dot f) = \check D$.
A particularly common occurrence is when $D = \omega$ or, more generally, some ordinal.
Under these circumstances, it is common to abuse notation and regard $\dot f$ as a function
defined on $D$, whose values are themselves names: $\dot f(x)$ is a $\Qcal$-name $\dot y$ such that
it is forced that $\dot f(\check x) = \dot y$.
Notice that if, for some sets $A$ and $B$, $p \forces \dot f: \check A \to \check B$,
it need not be the case that $\dot f(a)$ is of the form $\check b$ for some $b$ in $B$ --- i.e.
$p$ need not \emph{decide} the value of $\dot f(a)$ for a given $a \in A$.

In most cases, names are not constructed explicitly.
Rather a procedure is described for how to build the object to which the name
is referring.
Properties \ref{completeness_of_names} and \ref{ZFC_forced} are then implicitly invoked.
For example, if $\dot x$ is a $\Qcal$-name, $\bigcup \dot x$ is the $\Qcal$-name for the unique set
which is forced to be equal to the union of $\dot x$.
Notice that there is an abuse of notation at work here: formally, $\dot x$ is a set which has a union $y$.
It need not be the case that $y$ is even a $\Qcal$-name and certainly one should not
expect $\one \forces \bigcup \dot x = \check y$.
This is one of the reasons for using ``dot notation'': it emphasizes the role of the object as a name.

A more typical example of is $\omega_1$, the least uncountable ordinal.
Since ZFC proves ``there is a unique set $\omega_1$ such that $\omega_1$ is an ordinal, $\omega_1$ is uncountable,
and every element of $\omega_1$ is countable,''
it follows that if $\Qcal$ is any forcing, $\one \forces_\Qcal \exists x \phi(x)$, where $\phi(x)$ asserts $x$ is
the least uncountable ordinal.
In particular there is a $\Qcal$-name $\dot x$ such that $\one \forces_\Qcal \phi(\dot x)$.
Unless readability dictates otherwise, such names are denoted by adding a ``dot'' above the usual notation
(e.g. $\dot \omega_1$).

Another example is $\R$.
Recall that $\R$ is the completion of $\Q$ with respect to its metric --- formally the collection
of all equivalence classes of Cauchy sequences of rationals.
We use this same formal definition of $\R$ to define $\dot \R$:
if $\Qcal$ is a forcing, $\dot \R$ is the collection of all $\Qcal$-names for equivalence classes of
Cauchy sequences of rational numbers.
Notice that $\dot \R$ is not the same as $\check \R$
and, more to the point, we need not even have that
$\one \forces_{\Qcal} \dot \R = \check \R$ for a given forcing $\Qcal$.
This construction also readily generalizes to define $\dot X$ if $X$ is a complete
metric space.
The $\Qcal$-name $\dot X$ is then the collection of all $\Qcal$-names $\dot x$ such that
$\one$ forces that $\dot x$ is an equivalence class of Cauchy sequences
of elements of $\check X$.
That is, $\dot X$ is a $\Qcal$-name for the completion of $\check X$.

Finally, there are some definable sets which are always interpreted as
ground model sets and do not depend on the generic filter.
Two typical examples are finite and countable ordinals such as $0$, $1$, $\omega$, and $\omega^2$ as well
as sets such as $\Q$.
In such cases, checks are suppressed in writing the names for ease of readability --- we will write
$\Q$ and not $\check \Q$ or $\dot \Q$ in formulae which occur in the forcing language.

\section{The cast}

\label{cast:sec}

We will now introduce the examples which we will put to work
throughout the rest of the article.
The first class of examples provides the justification
for viewing forcings as abstract notions of randomness.

\begin{example}[random forcing]
Define $\Rand$ to be the collection of all measurable subsets of $[0,1]$ which have positive measure.
If $I$ is any index set, let $\Rand_I$ denote the collection of all measurable subsets of
$[0,1]^I$ which have positive measure.
Here $[0,1]$ is equipped with Lebesgue measure and $[0,1]^I$ is given the product measure.
Define $q \leq p$ to mean $q \subseteq p$.
This order is not separative so formally here we define $\Rand$ and $\Rand_I$ to be the corresponding
separative quotients.
This amounts to identifying those measurable sets which differ by a measure zero set.
Notice that every element of $\Rand_I$ contains a compact set in $\Rand_I$ --- the compact elements of $\Rcal_I$
are dense.
Furthermore, any two elements
of $\Rand_I$ are compatible if their intersection has positive measure.
\end{example} 

When working with a forcing $\Qcal$, one is rarely interested in the generic filter itself but rather
in some \emph{generic object} which can be derived in some natural way from the generic filter.
For instance, in $\Rand_I$ it is forced that
\[
\bigcap \{ \cl(q) : q \in \dot G\}
\]
contains a unique element.
We will let $\dot r$ denote a fixed $\Rand_I$-name for this element.
For each $i \in I$, let $\dot r_i$ denote a fixed $\Rand_I$-name for the $i$th coordinate of $\dot r$ and
observe that for all $i \ne j$ in $I$,  
\[
D_{i,j} := \{q \in \Rand_I : (x \in \cl(q)) \rightarrow (x(i) \ne x(j))\}
\]
is dense.
Therefore $\one \forces_{\Rand_I} \forall i \ne j \in \check I\ (\dot r_i \ne \dot r_j)$.
In particular, it is forced by $\Rand_I$ that $|\dot \R| \geq |\check I|$.
(Notice however, that we have not established that if, e.g., $I = \aleph_2$,
then $\one \forces_{\Rand_I} \dot \aleph_2 = \check \aleph_2$.
This will be established in Section \ref{ccc:sec}.)

In the context of $\Rand$, we will use $\dot r$ to denote a $\Rand$-name for the unique element
of $\bigcap \{\cl (q) : q \in \dot G\}$.
If $M$ is a transitive model of ZFC, then $r \in [0,1]$ is in every measure 1 Borel set coded in $M$ if and only if
$\{q \in \Rand \cap M : r \in q\}$ is a $M$-generic filter.
Such an $r$ is commonly referred to as a \emph{random real} over $M$.
The notion of a random real was first introduced by Solovay \cite{solovay_model}.

The next class of examples includes Cohen's original forcing from \cite{CON_negCH}.
Just as random forcing is rooted in measure theory,
Cohen forcing is rooted in the notion of Baire category.

\begin{example}[Cohen forcing]
Let $\Cohen$ denote the collection of all \emph{finite partial functions} from $\omega$ to $2$:
all functions $q$ such that the domain of $q$ is a finite subset of $\omega$ and the range of $q$ is
contained in $2 = \{0,1\}$.
We order $\Cohen$ by $q \leq p$ if $q$ extends $p$ as a function.
If $I$ is a set, let $\Cohen_I$ denote the collection
of all finite partial functions from $I \times \omega$ to $2$, similarly ordered by extension.
It is not difficult to show that $\Cohen_I$ is isomorphic to a dense suborder of the collection of all
nonempty open subsets of $[0,1]^I$, ordered by containment. 
This makes $\Cohen_I$ analogous to $\Rand_I$ (in fact it is a suborder), although viewing
$\Cohen_I$ as a collection of finite partial functions will often be more convenient from the point
of view of notation.
\end{example}

It is very often the case that forcings consist of a collection of \emph{partial functions} ordered
by \emph{extension}.
By this we mean that $q \leq p$ means that $p$ is the restriction of $q$ to the domain of $p$.
A filter in the forcing is then a collection of functions which is directed under containment
and whose union is therefore also a function.
This union is the generic object derived from the generic filter.

In the case of $\Cohen_I$, observe that
for each $i \ne j$ in $I$ and $n < \omega$, both 
\[
\{q \in \Cohen_I : (i,n) \in \dom(q)\}
\]
and
\[
\{q \in \Cohen_I : \exists m\ \big((\{(i,m),(j,m)\} \subseteq \dom(q)) \land (q(i,m) \ne q(j,m)) \big)\}
\]
are dense.
In particular, the generic object will be a function from $I \times \omega$ into $2$.
As in the case of $\Rand_I$,
such a generic object naturally corresponds to an indexed family $\Seq{r_i : i \in I}$
of elements of $[0,1]$ and
genericity ensures that these elements are all distinct.

If $M$ is a transitive model of ZFC, then $r \in [0,1]$ is in every dense open set coded in $M$ if and only if the set of finite restrictions of the binary expansion of $r$ is an
$M$-generic filter for the forcing $\Cohen$.
Such an $r$ is commonly referred to as a \emph{Cohen real} over $M$.
Notice that $[0,1]$ is a union of a measure 0 set and a set of first category:
for every $n$, the rationals in $[0,1]$ are contained in a relatively dense open set
of measure less than $1/n$.
Thus no element of $[0,1]$ is both
a Cohen real and a random real over a transitive model of ZFC.
In fact there are qualitative difference between Cohen and random reals as well.
For instance, in the case of random forcing it is forced that
\[
\lim_{n \to \infty} \frac{1}{n} |\{i < n : r(i) = 1\}| = \frac{1}{2}
\]
where as in the case of Cohen forcing, it is forced that the limit does not exist.

The next example of a forcing appears similar at first to random forcing, but in fact it is quite
different in nature.

\begin{example}[Amoeba forcing]
If $1 > \epsilon > 0$, then define $\Ameoba_\epsilon$ to be the collection of all elements of
$\Rand$ of measure greater than $\epsilon$.
This is regarded as a forcing with the order induced from $\Rand$.
\end{example}

Notice that the compatibility relation on $\Ameoba_{\epsilon}$ differs from that inherited from $\Rand$:
two conditions in $\Ameoba_{\epsilon}$ are compatible in $\Ameoba_{\epsilon}$ if and only if their intersection has measure
greater than $\epsilon$.

The previous forcings all introduce a new subset of $\omega$ to the ground model.
The next example adds a new ultrafilter on $\omega$ but, as we will see in Section \ref{sigma-closed:sec}, it
does not introduce a new subset of $\omega$.

\begin{example} 
Let $[\omega]^\omega$ denote the collection
of all infinite subsets of $\omega$.
The ordering of containment on $[\omega]^\omega$ is not separative;
its separative quotient is obtained by identifying those $x$ and $y$
which have finite symmetric difference.
We will abuse notation and denote this quotient by $[\omega]^{\omega}$ as well.
\end{example}

The next forcing was introduced by Mathias \cite{happy_families} to study
infinite dimensional generalizations of Ramsey's theorem.

\begin{example}[Mathias forcing]
Let $\Mathias$ denote the collection of all pairs $p = (a_p,A_p)$
such that $A_p$ is in $[\omega]^{\omega}$ and $a_p$ is a finite initial part of $A_p$.
Define $q \leq p$ to mean $a_p \subseteq a_q$ and $A_q \subseteq A_p$.
Note in particular that in this situation $a_p$ is an initial segment of $a_q$ and
$a_q \setminus a_p$ is contained in $A_p$.
This forcing is known as \emph{Mathias forcing}.
\end{example}

The final example is an illustration of the potential raw power of forcing.
Typically the phenomenon of collapsing cardinals to $\aleph_0$ is something one wishes
to avoid.

\begin{example}[collapsing to $\aleph_0$]
If $X$ is a set, consider the collection $X^{<\omega}$ of all finite sequences of elements
of $X$, ordered by extension.
Observe that if $x$ is in $X$, then the collection of all elements of $X^{<\omega}$ which
contain $x$ in their range is dense.
Thus $X^{<\omega}$ forces that $|\check X| = \aleph_0$.
Notice that if $X = \R$ in this example, then it is forced that
$|\check \R| = \check \aleph_0 <  |\dot \R|$.
\end{example}

\section{The countable chain condition}

\label{ccc:sec}

Something which is often an important consideration in the analysis of a forcing
is whether uncountability is preserved.
That is, does $\one \forces_\Qcal \check \aleph_1= \dot \aleph_1$?
More generally, one can ask whether cardinals are preserved by forcing with $\Qcal$:
if $X$ and $Y$ are sets such that $|X| < |Y|$,
then does $\one \forces_\Qcal |\check X| < |\check Y|$?

In general, this can be a very subtle matter (and even can be influenced by forcing).
One way to demonstrate that a forcing preserves cardinals
is to verify that it satisfies the \emph{countable chain condition (c.c.c.)}.
A forcing $\Qcal$ satisfies the \emph{c.c.c.} if every family of pairwise incompatible elements of $\Qcal$ is at most countable.
The following proposition dates to Cohen's proof that the Continuum Hypothesis is independent of ZFC.

\begin{prop} \label{ccc_card}
Suppose that $\Qcal$ is a c.c.c. forcing.
If $\kappa$ is a regular cardinal, then $\one \forces \check \kappa \textrm{ is a regular cardinal}$.
In particular, $\kappa$ is a cardinal then $\one \forces \kappa \textrm{ is a cardinal}$ and hence
for every ordinal $\alpha$,
$\one \forces \dot \aleph_{\alpha} = \check \aleph_{\alpha}$. 
\end{prop}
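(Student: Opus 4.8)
The plan is to prove the standard \emph{covering lemma} for c.c.c.\ forcings --- that every $\Qcal$-name for a function from a ground model ordinal into the ordinals is, coordinatewise, confined to a ground-model sequence of countable sets --- and then read off preservation of regularity, and hence of all cardinals, from it.

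First I would establish the following. \textbf{Claim:} if $\Qcal$ is c.c.c., $q \in \Qcal$, $\lambda$ is an ordinal, and $q \forces \dot f \colon \check\lambda \to \text{(the ordinals)}$, then there is a function $F$ with domain $\lambda$ such that each $F(\alpha)$ is a countable set of ordinals and $q \forces \dot f(\check\alpha) \in \check{F(\alpha)}$ for every $\alpha < \lambda$. To see this, fix $\alpha < \lambda$. Since $q \forces \dot f(\check\alpha)$ is an ordinal, Property \ref{ordinal_names} gives an ordinal $\mu_\alpha$ with $q \forces \dot f(\check\alpha) \in \check\mu_\alpha$; by Property \ref{decide} together with monotonicity, the set $D_\alpha$ of conditions below $q$ that decide $\dot f(\check\alpha)$ is dense below $q$. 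Let $A_\alpha \subseteq D_\alpha$ be a maximal antichain (Zorn's Lemma); by density it is a maximal antichain of $\Qcal_q$, hence countable by the c.c.c. Put $F(\alpha) := \{\beta : (\exists r \in A_\alpha)\ r \forces \dot f(\check\alpha) = \check\beta\}$, a countable set. If $q$ failed to force $\dot f(\check\alpha) \in \check{F(\alpha)}$, then by Property \ref{negation_monotone} some $q' \leq q$ would force $\dot f(\check\alpha) \notin \check{F(\alpha)}$; choosing $r \leq q'$ in $D_\alpha$ with $r \forces \dot f(\check\alpha) = \check\beta$, we would have $\beta \in F(\alpha)$ by construction, so $r \forces \dot f(\check\alpha) \in \check{F(\alpha)}$ by Proposition \ref{bounded_abs}, contradicting $r \leq q'$. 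This proves the Claim, and I expect it to be the main obstacle: it is the only place where the c.c.c.\ and the bookkeeping with antichains and names are used in an essential way.

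Next I would deduce that regular cardinals are preserved. We may assume $\kappa$ is uncountable, the cases $\kappa \leq \omega$ being immediate from absoluteness (Proposition \ref{bounded_abs}). Suppose toward a contradiction that $\one$ does not force ``$\check\kappa$ is a regular cardinal.'' A failure of regularity of $\check\kappa$ in a generic extension amounts precisely to the existence of an ordinal $\lambda < \check\kappa$ and a function $\lambda \to \check\kappa$ with unbounded range; so by Properties \ref{negation_monotone}, \ref{ZFC_forced}, \ref{completeness_of_names}, and \ref{ordinal_names} (the last to pin $\lambda$ below $\kappa$ down to a fixed ordinal) there are $q \in \Qcal$, an ordinal $\lambda < \kappa$, and a $\Qcal$-name $\dot f$ with $q \forces \dot f \colon \check\lambda \to \check\kappa$ has unbounded range. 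Apply the Claim to obtain $F$ with $q \forces \dot f(\check\alpha) \in \check{F(\alpha)}$ for all $\alpha < \lambda$, and set $S := \bigcup_{\alpha < \lambda} F(\alpha)$. Then $|S| \leq |\lambda| \cdot \aleph_0 < \kappa$, since $\kappa$ is regular and uncountable with $|\lambda| < \kappa$; hence $\gamma := \sup S < \kappa$, again by regularity. For each $\alpha < \lambda$ we have $F(\alpha) \subseteq S \subseteq \gamma + 1$, so $q \forces \dot f(\check\alpha) \in \check{(\gamma+1)}$ by Proposition \ref{bounded_abs}; by Proposition \ref{check_quant} it follows that $q \forces \forall \alpha \in \check\lambda\ (\dot f(\alpha) \leq \check\gamma)$, i.e.\ $q$ forces that $\dot f$ has range bounded by $\check\gamma < \check\kappa$, contradicting the choice of $q$. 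Hence $\one \forces \check\kappa$ is a regular cardinal.

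Finally I would obtain the ``in particular'' clauses by comparing the cardinals of $M$ with those of a generic extension $M[G]$. Since $M \subseteq M[G]$, every cardinal of $M[G]$ is already a cardinal of $M$; conversely the previous paragraph shows that every regular cardinal of $M$ --- in particular, by Section \ref{prelim:sec}, every successor cardinal, as well as $\aleph_0$ --- remains a cardinal in $M[G]$. For a limit cardinal $\kappa$ of $M$, and any $\lambda < \kappa$, pick a cardinal $\mu$ with $\lambda < \mu < \kappa$; then $\mu^+ \leq \kappa$ is a successor cardinal, hence a cardinal of $M[G]$, and composing any surjection $\lambda \to \kappa$ of $M[G]$ with the restriction of its domain to the preimage of $\mu^+$ would exhibit a surjection onto $\mu^+$ from a set of size $\leq |\lambda| < \mu^+$ --- impossible. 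So every cardinal of $M$ is a cardinal of $M[G]$, and the two classes of cardinals coincide. Since $\alpha \mapsto \aleph_\alpha$ is defined by recursion from the ``next cardinal'' operation and suprema, and both operations are computed the same way in $M$ and $M[G]$, an easy induction on $\alpha$ (formalized as $\one$ forcing the corresponding statement about $\check\aleph_\alpha$) yields $\one \forces \dot\aleph_\alpha = \check\aleph_\alpha$ for every ordinal $\alpha$.
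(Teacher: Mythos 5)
Your proof is correct and follows essentially the same covering argument as the paper: use the c.c.c.\ to confine each $\dot f(\check\alpha)$ to a countable ground-model set of ordinals and then invoke regularity of $\kappa$ to bound the range (the paper simply takes $F(\alpha)$ to be the set of all values decided by extensions of $p$, noting that deciders of distinct values are pairwise incompatible, and dispatches the ``in particular'' clause in one line by observing that every cardinal is a supremum of successor cardinals). The only step stated too quickly is ``$\beta \in F(\alpha)$ by construction'': since $F(\alpha)$ was defined from $A_\alpha$ rather than from $D_\alpha$, you should instead use maximality of $A_\alpha$ in $\Qcal_q$ to find $s \in A_\alpha$ compatible with $q'$; a common extension forces $\dot f(\check\alpha)$ to equal the value decided by $s$, which lies in $F(\alpha)$, yielding the same contradiction.
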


\begin{proof}
The second conclusion follows from the first since every
cardinal is the supremum of a set of successor cardinals and every
supremum of a set of successor cardinals is a cardinal.
Let $\kappa$ be a regular cardinal and $\Qcal$ be a given forcing.
Suppose that $\dot f$ and $\dot \lambda$ are $\Qcal$-names and that
$p$ is an element of $\Qcal$
such that
\[
p \forces (\dot \lambda \in \check \kappa) \land (\dot f : \dot \lambda \to \check \kappa)
\]
By extending $p$ if necessary, we may assume without loss of generality that $\dot \lambda = \check \lambda$
for some $\lambda < \kappa$.
It is sufficient to show that $p$ forces that $\dot f$ is not a surjection.
If $\kappa$ is countable, then $\lambda$ is finite and it is possible to decide $\dot f$ by deciding its values
one at a time
(this does not require that $\Qcal$ is c.c.c.).
Thus we will assume that $\kappa$ is uncountable.

For each $\alpha < \lambda$, define
\[
F(\alpha) := \{ \beta < \kappa : \exists q \leq p (q \forces \dot f(\alpha) = \check \beta)\}.
\]
Notice that if $\beta \ne \beta'$ are in $F(\alpha)$ and
$q$ forces that $\dot f(\alpha) = \check \beta$ and $q'$ forces that $\dot f( \alpha) = \check \beta'$,
then $q$ and $q'$ are incompatible (otherwise any extension $\bar q$ would
force $\check \beta = \dot f( \alpha) = \check \beta'$).
Since $\Qcal$ is c.c.c., $F(\alpha)$ is countable and has an upper bound $g(\alpha) < \kappa$.

Since $\kappa$ is regular, the range of $g$ is bounded.
We are therefore finished once we show that
\[
p \forces \forall \alpha \in \check \lambda\ (\dot f( \alpha) \in \check F( \alpha)).
\]
By Proposition \ref{check_quant}, this is equivalent to showing that for all $\alpha$ in $\lambda$,
$p \forces \dot f( \alpha) \in \check F( \alpha)$.
Suppose for contradiction that this is not the case.
Then there is a $q \leq p$ and an $\alpha$ in $\lambda$ such that
$q \forces \dot f( \alpha) \not \in \check F( \alpha)$.
By Proposition \ref{decide}, there is a $q' \leq q$ and a $\beta < \kappa$ such that
$q' \forces \dot f( \alpha) = \check \beta$.
But now $\beta \in F(\alpha)$, a contradiction.
\end{proof}

\begin{prop} \label{generic_prop_K}
Suppose that $\Qcal$ is a c.c.c. forcing and that $\Seq{q_\xi : \xi < \omega_1}$ is a sequence of conditions in $\Qcal$.
Then there is a $p$ such that
\[
p \forces \{\xi \in \check \omega_1 : q_\xi \in \dot G\} \textrm{ is uncountable}.
\]
\end{prop}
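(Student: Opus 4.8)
The plan is to produce a single condition $p$ which forces that uncountably many of the $q_\xi$ land in the generic filter, and the natural candidate is a condition lying below the "accumulation point" of the sequence $\Seq{q_\xi : \xi < \omega_1}$ in an appropriate sense. Concretely, first I would consider, for each condition $r \in \Qcal$, the set $S_r := \{\xi < \omega_1 : r \text{ is compatible with } q_\xi\}$. The claim I want is that there is some $r$ for which $S_r$ is uncountable and, moreover, $r$ forces $\{\xi : q_\xi \in \dot G\}$ to be uncountable; I will take $p$ to be such an $r$ (possibly after one further refinement). The reason such an $r$ exists is the countable chain condition applied to a cleverly chosen antichain, exactly as in the standard argument that c.c.c. forcings have precaliber $\aleph_1$.

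The key steps, in order. \emph{Step 1:} Suppose toward a contradiction that \emph{no} condition $p$ forces the desired statement. Then for every $p \in \Qcal$ there is $q \leq p$ and a name $\dot\beta$ with $q \forces "\dot\beta \in \check\omega_1$ and $\xi \mapsto q_\xi \in \dot G$ holds for no $\xi \geq \dot\beta"$, i.e. below every condition one can find an extension bounding the set of "active" indices; unpacking this, for every $p$ there are $q \leq p$ and $\gamma < \omega_1$ with $q \forces \{\xi \in \check\omega_1 : q_\xi \in \dot G\} \subseteq \check\gamma$. \emph{Step 2:} Using this, recursively build a sequence $\Seq{p_\eta : \eta < \omega_1}$ of conditions together with a strictly increasing sequence of countable ordinals $\Seq{\gamma_\eta : \eta < \omega_1}$ so that $p_\eta \forces \{\xi : q_\xi \in \dot G\} \subseteq \check\gamma_\eta$ and so that $p_\eta$ is compatible with $q_{\gamma_\eta}$ but incompatible with $q_\xi$ for all $\gamma_\eta < \xi$; at stage $\eta$, apply Step 1 to a condition witnessing compatibility with some $q_\xi$ for $\xi$ above $\sup_{\eta' < \eta}\gamma_{\eta'}$, using Property \ref{filter_name} to translate "$q_\xi \in \dot G$" into the compatibility statement. \emph{Step 3:} Show the resulting $\Seq{p_\eta : \eta < \omega_1}$ is an uncountable antichain: if $\eta < \eta'$ then $\gamma_\eta < \gamma_{\eta'}$, and $p_{\eta'}$ refines a condition forcing the generic set into $\check\gamma_{\eta'}$ while being compatible with $q_{\gamma_{\eta'}}$, whereas $p_\eta$ is incompatible with every $q_\xi$ for $\xi > \gamma_\eta$, in particular with $q_{\gamma_{\eta'}}$ — so $p_\eta$ and $p_{\eta'}$ cannot have a common extension (such an extension would have to be compatible with $q_{\gamma_{\eta'}}$ via $p_{\eta'}$ and incompatible with it via $p_\eta$). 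This contradicts the c.c.c. hypothesis on $\Qcal$, completing the proof.

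The main obstacle I anticipate is bookkeeping in Step 2: one must interleave the recursion on $\eta$ (choosing $p_\eta$, $\gamma_\eta$) carefully enough that the incompatibility in Step 3 genuinely holds, which requires that each $p_\eta$ decides the bound $\gamma_\eta$ on the active index set \emph{and} that $p_\eta$ is actually compatible with a specific $q$ indexed above all earlier $\gamma$'s. The translation between "$q_\xi \in \dot G$" and raw compatibility of conditions (Property \ref{filter_name}, which for separative $\Qcal$ simply says $p \forces \check q_\xi \in \dot G$ iff $p \leq q_\xi$) is the gadget that makes this combinatorial rather than genuinely about names; once one adopts the convention that $\Qcal$ is separative (as the paper does), the argument reduces to the classical "$\aleph_1$ is a precaliber of every c.c.c. poset" and the only real work is confirming that the negation of the statement gives exactly the recursive hypothesis used to build the antichain. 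A cleaner alternative worth checking is to appeal directly to the $\Delta$-system / c.c.c. fact to extract an uncountable centered-enough subsequence and take $p$ below a pseudo-intersection-style bound, but the contradiction-plus-antichain route above seems most self-contained given only the properties developed in the excerpt.
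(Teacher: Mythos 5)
Your overall strategy---negate the conclusion and then recursively build an uncountable antichain to contradict the c.c.c. (the standard ``precaliber $\aleph_1$'' argument)---can be made to work, but as written Steps 2 and 3 contain a genuine flaw. First, the requirements you impose on $p_\eta$ are mutually inconsistent: if $p_\eta \forces \{\xi \in \check\omega_1 : q_\xi \in \dot G\} \subseteq \check\gamma_\eta$, then $p_\eta$ forces $\check q_{\gamma_\eta} \notin \dot G$ and is therefore \emph{incompatible} with $q_{\gamma_\eta}$ (a common extension $r \leq p_\eta, q_{\gamma_\eta}$ would force $\check q_{\gamma_\eta} \in \dot G$ by Property \ref{filter_name}, hence force $\check\gamma_\eta \in \check\gamma_\eta$, which is impossible by Property \ref{check_names}). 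So ``$p_\eta$ is compatible with $q_{\gamma_\eta}$'' can never be arranged; the condition you want is compatibility with---in fact extension of---some $q_{\xi_\eta}$ whose index $\xi_\eta$ lies strictly above $\sup_{\eta' < \eta} \gamma_{\eta'}$ (and then automatically $\xi_\eta < \gamma_\eta$). Second, in Step 3 mere compatibility of $p_{\eta'}$ with the relevant $q$ is not inherited by extensions of $p_{\eta'}$: from $r \leq p_{\eta'}$ and ``$p_{\eta'}$ is compatible with $q$'' you cannot conclude that $r$ is compatible with $q$, so your parenthetical argument that $p_\eta$ and $p_{\eta'}$ have no common extension does not go through. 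The repair is the natural one: at stage $\eta$ set $\delta_\eta := \sup_{\eta' < \eta} \gamma_{\eta'}$, choose $\xi_\eta > \delta_\eta$, and apply your Step 1 \emph{below the condition $q_{\xi_\eta}$}, obtaining $p_\eta \leq q_{\xi_\eta}$ and $\gamma_\eta$ with $p_\eta \forces \{\xi : q_\xi \in \dot G\} \subseteq \check\gamma_\eta$; then $\gamma_\eta > \xi_\eta > \delta_\eta$ (since $p_\eta \forces \check\xi_\eta \in \check\gamma_\eta$, and such check statements reflect), and for $\eta < \eta'$ any common extension of $p_\eta$ and $p_{\eta'}$ lies below $q_{\xi_{\eta'}}$ with $\xi_{\eta'} > \gamma_\eta$, directly contradicting that $p_\eta$ forces $\check q_{\xi_{\eta'}} \notin \dot G$. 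Note also that your Step 1 already uses the c.c.c., via Proposition \ref{ccc_card}, to know that a set forced to be countable is forced to be bounded in $\check\omega_1$.

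For comparison, the paper's proof avoids the transfinite recursion entirely: from the negation it extracts a single name $\dot\beta$ with $\one \forces \forall \xi \in \check\omega_1\ (q_\xi \in \dot G \rightarrow \xi < \dot\beta)$, applies the c.c.c. once (exactly as in Proposition \ref{ccc_card}) to bound all decided values of $\dot\beta$ by a single $\gamma < \omega_1$, and then observes that $q_\gamma \forces \check q_\gamma \in \dot G$, an immediate contradiction. Your antichain route, once repaired, proves the same statement (and is the argument one would give for the general precaliber fact), but the paper's version is shorter because it applies the chain condition to the possible values of one name rather than to a recursively constructed $\omega_1$-family of conditions.
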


\begin{remark}
Notice that this characterizes the c.c.c.: if $A$ is an uncountable antichain in a forcing $\Qcal$,
then any condition forces that $\check A \cap \dot G$ contains at most one element.
\end{remark}

\begin{proof}
Suppose that this is not the case.
Then 
\[
\one \forces
\exists \beta \in \check \omega_1
\forall \xi \in \check \omega_1\ 
(q_\xi \in \dot G \rightarrow \xi < \beta).
\]
By Property \ref{completeness_of_names} of the forcing relation,
there is a $\Qcal$-name $\dot \beta$ for an element of $\omega_1$ such that
\[
\one \forces \forall \xi \in \check \omega_1\ (q_\xi \in \dot G \rightarrow \xi < \dot \beta).
\]
As in the proof of Proposition \ref{ccc_card},
the set of $\alpha < \omega_1$ such that, for some $q \in Q$,
$q \forces \dot \beta = \check \alpha$
is countable and therefore bounded by some $\gamma$.
That is 
$$
\one \forces \forall \xi \in \check \omega_1\ (q_\xi \in \dot G \rightarrow \xi < \check \gamma).
$$
But now $q_\gamma$ forces that $\check q_\gamma$ is in $\dot G$, a contradiction.
\end{proof}

We will now return to some of the examples introduced in Section \ref{cast:sec}.

\begin{prop} \label{cohen_random_ccc}
For any index set $I$, both $\Rcal_I$ and $\Ccal_I$ are c.c.c. forcings.
\end{prop}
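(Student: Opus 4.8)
The plan is to prove that any antichain in $\Rand_I$ or $\Cohen_I$ is at most countable by reducing to a countable ``support'' and then using the essentially combinatorial structure there. I would treat the two cases together where possible, since $\Cohen_I$ embeds densely into the nonempty open subsets of $[0,1]^I$, but the cleanest uniform argument is to note that in both cases $[0,1]^I$ carries a probability measure (product of Lebesgue measure, respectively the Lebesgue–Haar measure corresponding to viewing a Cohen condition as a basic clopen set), conditions are positive-measure sets, and two conditions are compatible exactly when they intersect in positive measure. So it suffices to prove the following measure-theoretic fact: in a probability space, any family of positive-measure sets that is pairwise ``almost disjoint'' (pairwise intersections of measure zero) is at most countable. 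This is immediate: for each $n$, at most $n$ members of such a family can have measure $> 1/n$, since $k$ pairwise-almost-disjoint sets of measure $> 1/n$ would have union of measure $> k/n$; hence the family is a countable union of finite sets.

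For the Cohen case one can alternatively give the more standard combinatorial argument, which I would at least mention since it does not invoke the product measure: a condition $q \in \Cohen_I$ is a finite partial function, and two conditions are incompatible only if they disagree at some common coordinate of their domains. Given an uncountable antichain $\{q_\xi : \xi < \omega_1\}$, the domains $\dom(q_\xi)$ are finite subsets of $I \times \omega$, so by the $\Delta$-system lemma (for uncountable families of finite sets) there is an uncountable $S \subseteq \omega_1$ and a finite root $R$ with $\dom(q_\xi) \cap \dom(q_\eta) = R$ for distinct $\xi, \eta \in S$. Since there are only finitely many functions from $R$ into $2$, two of the $q_\xi|_R$ for $\xi \in S$ coincide, and those two conditions then agree on the whole overlap of their domains, hence are compatible — contradicting that we had an antichain.

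I would organize the write-up by first stating the reduction (positive-measure conditions, compatibility $=$ positive-measure intersection), then giving the one-line counting argument for the general probability-space statement, and concluding that both $\Rand_I$ and $\Cohen_I$ satisfy the c.c.c. The main obstacle, such as it is, is purely bookkeeping: verifying that the separative quotient and the passage between ``finite partial function'' and ``basic open set'' do not disturb the measure-theoretic description of compatibility, and (if one uses the $\Delta$-system route) invoking the uncountable $\Delta$-system lemma, which is standard but has not been proved in the excerpt. I would simply cite it, or else prefer the measure argument, which is self-contained given the descriptions of $\Rand_I$ and $\Cohen_I$ already set up in Section~\ref{cast:sec}.
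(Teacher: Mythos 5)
Your main argument is essentially the paper's own proof: both reduce the c.c.c.\ for $\Rcal_I$ and $\Ccal_I$ to the pigeonhole observation that more than $1/\epsilon$ pairwise almost-disjoint sets cannot each have measure at least $\epsilon$, after viewing Cohen conditions as positive-measure open subsets of $[0,1]^I$ for which compatibility in $\Ccal_I$ agrees with positive-measure intersection. The $\Delta$-system argument you sketch for $\Ccal_I$ is a correct standard alternative, but it is not the route the paper takes and is not needed once the measure reduction is in place.
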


\begin{proof}
In the case of $\Rand_I$, this is just a reformulation of the assertion
that if $\Fcal$ is an uncountable
family of measurable subsets of $[0,1]^I$, each having positive measure,
then there are two elements of $\Fcal$ which intersect in a set of positive measure.
The reason for this is that if $\Fcal$ is uncountable, then for some $\epsilon > 0$ there 
are more than $1/\epsilon$ elements of $\Fcal$ with measure at least $\epsilon$.
At least two of these elements must intersect in a set of positive measure.
The same argument applies to $\Ccal_I$, by observing that
we may view $\Ccal_I$ as a dense suborder of the collection of all nonempty open subsets
of $[0,1]^I$, ordered by containment.
Since any nonempty open subset of $[0,1]^I$ has positive measure we may view
$\Ccal_I$ as a suborder of $\Rcal_I$.
Moreover, conditions $p,q \in \Ccal_I$ which are compatible in $\Rcal$ are compatible in $\Ccal_I$.
\end{proof}

\begin{prop} \label{ameoba_ccc}
The forcing $\Ameoba_\epsilon$ satisfies the c.c.c. for every $\epsilon > 0$.
\end{prop}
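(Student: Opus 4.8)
The plan is to argue by contradiction. Suppose $\Seq{A_\xi : \xi < \omega_1}$ is an uncountable antichain in $\Ameoba_\epsilon$; writing $\mu$ for Lebesgue measure, this means $\mu(A_\xi) > \epsilon$ for every $\xi$ while, as recorded right after the definition of $\Ameoba_\epsilon$, $\mu(A_\xi \cap A_\eta) \le \epsilon$ whenever $\xi \ne \eta$ (two conditions of $\Ameoba_\epsilon$ being compatible in $\Ameoba_\epsilon$ exactly when their intersection has measure greater than $\epsilon$). I will derive a contradiction by producing two indices $\xi \ne \eta$ for which $A_\xi$ and $A_\eta$ are close in the symmetric-difference metric, which then forces $\mu(A_\xi \cap A_\eta) > \epsilon$.

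First I would separate the measures uniformly from $\epsilon$: since $(\epsilon,1] = \bigcup_{n \ge 1}(\epsilon + 1/n, 1]$ and $\omega_1$ is regular, there is an $n$ for which $\{\xi < \omega_1 : \mu(A_\xi) > \epsilon + 1/n\}$ is uncountable. Fix such an $n$, set $\delta := 1/n$, and discard the other indices, so that from now on $\mu(A_\xi) > \epsilon + \delta$ for every $\xi$ in an uncountable set of indices.

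The crucial observation is that $\Rand$, equipped with the metric $d(p,q) := \mu(p \symdif q)$, is a separable metric space: the finite unions of subintervals of $[0,1]$ with rational endpoints form a countable $d$-dense set, since any measurable set can be approximated in measure by an open superset and then by a finite union of the component intervals of that open set. Hence $\Rand$ is covered by countably many open $d$-balls of radius $\delta/4$, and by regularity of $\omega_1$ one of these balls contains $A_\xi$ for uncountably many $\xi$. Choosing two distinct such indices $\xi$ and $\eta$, we get $\mu(A_\xi \symdif A_\eta) = d(A_\xi, A_\eta) < \delta/2$, so
\[
\mu(A_\xi \cap A_\eta) \ge \mu(A_\xi) - \mu(A_\xi \symdif A_\eta) > (\epsilon + \delta) - \tfrac{\delta}{2} = \epsilon + \tfrac{\delta}{2} > \epsilon .
\]
Thus $A_\xi \cap A_\eta$ is a condition of $\Ameoba_\epsilon$ below both $A_\xi$ and $A_\eta$, contradicting their incompatibility and completing the proof.

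The one genuinely nontrivial ingredient is the separability of the measure algebra $\Rand$ in the symmetric-difference metric; this is exactly the feature that the coarser compatibility relation of $\Ameoba_\epsilon$ does not respect, so it is worth stating explicitly rather than leaving implicit. Beyond that, the only point requiring care is the order of operations: the thinning step must come before the covering argument, so that a single $\delta$ both bounds $\mu(A_\xi)$ from below and controls the radius of the balls, and taking that radius to be $\delta/4$ (so two points of a common ball are within $d$-distance $\delta/2$) leaves the final estimate strictly above $\epsilon$.
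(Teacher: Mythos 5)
Your proof is correct. It is close in spirit to the paper's argument --- both rest on a countable family of rational-interval approximations plus a pigeonhole and a two-term measure estimate --- but the execution differs in a way worth noting. The paper does not thin the antichain: it partitions a dense set of conditions (the compact ones) into countably many families $\Fcal_p$, indexed by finite unions of rational intervals $p$, consisting of the subsets of $p$ whose measure falls short of $\lambda(p)$ by less than $(\lambda(p)-\epsilon)/2$; the closeness threshold thus varies with $p$, any two members of a single $\Fcal_p$ are compatible, and the conclusion is reached by passing to compact extensions of the antichain members. Your route instead works with the antichain members themselves: the preliminary thinning buys a uniform margin $\delta$, and separability of the measure algebra under $d(p,q)=\mu(p\symdif q)$ supplies the countable cover by linked balls. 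What each buys: the paper's version avoids the thinning step but must invoke density of the compact conditions (containment in a rational-interval superset requires outer approximation, which is where compactness enters), and it exhibits a $\sigma$-linked dense subset along the way; your version needs no compactness or passage to extensions, since symmetric-difference approximation works for arbitrary measurable sets, and it in fact shows that all of $\Ameoba_\epsilon$ is a countable union of linked families --- in both cases a stronger structural property than the c.c.c. itself. You were also right to flag the order of operations: fixing $\delta$ before choosing the radius $\delta/4$ is exactly what replaces the paper's $p$-dependent threshold $(\lambda(p)-\epsilon)/2$.
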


\begin{proof}
Let $D$ denote the collection of all elements of $\Ameoba_\epsilon$ which are finite unions of rational
intervals.
Notice that $D$ is countable.
For each $p$ in $D$, let $\Fcal_p$ denote the collection of all elements $q$ of $\Ameoba_\epsilon$ such that
$q \subseteq p$ and
\[
\lambda (p) - \lambda (q) < \frac{\lambda(p) - \epsilon}{2}.
\]
Notice that $\bigcup_{p \in D} \Fcal_p$ contains all of the compact sets in $\Ameoba_\epsilon$ which
are in turn dense in $\Ameoba_\epsilon$.
Moreover, any two elements of $\Fcal_p$ intersect in a set
of measure greater than $\epsilon$ and hence have a common lower bound in $\Ameoba_\epsilon$.
If  $X \subseteq \Ameoba_\epsilon$ is uncountable, two distinct elements of $X$ must have extensions in the same
$\Fcal_p$ for some $p$ and thus be compatible.
Hence any antichain in $\Ameoba_\epsilon$ is countable and $\Ameoba_\epsilon$ is c.c.c..
\end{proof}

\begin{remark}
The reader may wonder why we have not bothered to generalize $\Ameoba_\epsilon$ to a larger index set,
given that we did this for $\Rand$.
The reason is that, for uncountable index sets, the analog of $\Ameoba_{\epsilon}$ is not c.c.c.
and in fact collapses the cardinality of $I$ to become countable.
\end{remark}

We finish this section by demonstrating that the Continuum Hypothesis isn't provable
within ZFC.
By Theorem \ref{ccc_card},
\(\Rand_{\omega_2}\) forces that \(\dot \aleph_1 = \check \aleph_1\) and \(\dot \aleph_2 = \check \aleph_2\).
On the other hand, we have already observed that for all $\alpha < \beta < \omega_2$,
\[
\one \forces_{\Rand_{\omega_2}} \dot r_\alpha \ne \dot r_\beta.
\]
Hence $\Rand_{\omega_2}$ forces that $|\dot \R| \geq \check \aleph_2 = \dot \aleph_2$.
Since the set of formulas which are forced by $\one$ is a consistent theory
extending ZFC and containing $|\R| \geq \aleph_2$,
this establishes that that ZFC cannot prove the Continuum Hypothesis.
The same argument shows that $\Cohen_{\omega_2}$ forces that CH is false;
this was the essence of Cohen's proof \cite{CON_negCH}.

\section[Intersection property]{An intersection property of families of sets of positive measure*}

\label{ameoba:sec}

The purpose of this section is to use the tools which we have developed in order to
prove the following intersection property of sets
of positive measure in $[0,1]$.

\begin{prop}
If $X \subseteq \R$ is uncountable and
$\Seq{B_x : x \in X}$ is an indexed collection of Borel subsets of $[0,1]$,
each having positive measure,
then there is a nonempty set $Y \subseteq X$ such that $Y$ has no isolated points and
such that
$\bigcap \{B_y : y \in Y\}$ has positive measure.
\end{prop}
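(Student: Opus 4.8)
### Plan for the proof

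\textbf{The plan is to} force with the amoeba forcing $\Ameoba_\epsilon$ for a suitable $\epsilon$, extract from the generic object a perfect set of indices $y$ for which $B_y$ simultaneously contains a common ``large'' piece, and then use absoluteness to pull this conclusion back to the ground model. The point of amoeba forcing is exactly that it adds a measure-$1$ set of reals avoiding a prescribed measure-zero set while remaining c.c.c.; here the relevant feature is that it forces the existence of an open set $\dot U$ of measure close to $\epsilon$ which is contained (modulo null) in the generic ``amoeba'' condition, and hence the generic object identifies a condition of measure barely above $\epsilon$ that can be refined densely. Concretely, since each $B_x$ has positive measure, by countable additivity there is a single $\epsilon > 0$ and an uncountable $X' \subseteq X$ such that $\lambda(B_x) > \epsilon$ for all $x \in X'$; replace $X$ by $X'$.

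\textbf{First I would} observe that for each $x \in X$ the set $B_x$, viewed as a condition, lies in $\Ameoba_\epsilon$, and that the family $\Seq{B_x : x \in X}$ is an uncountable family of conditions in a c.c.c.\ forcing. \textbf{Then} I would invoke Proposition~\ref{generic_prop_K}: there is a condition $p \in \Ameoba_\epsilon$ forcing that $\{x \in \check X : B_x \in \dot G\}$ is uncountable. Now work below $p$. In the generic extension $M[G]$, let $Y_G := \{x \in X : B_x \in G\}$; this is an uncountable — hence, since $X \subseteq \R$ and c.c.c.\ preserves $\aleph_1$, genuinely uncountable — subset of $X$, and every $B_y$ with $y \in Y_G$ belongs to the generic filter $G$, so each such $B_y$ (mod null) contains the generic amoeba set $\bigcap\{q : q \in G\}$, which has measure $\geq \epsilon > 0$. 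Thus $\bigcap\{B_y : y \in Y_G\}$ has measure $\geq \epsilon$ in $M[G]$. An uncountable set of reals has a nonempty subset with no isolated points (its Cantor--Bendixson perfect kernel, or simply: a subset of $\R$ with no isolated points and positive ``density'' extracted by a standard argument), so there is a nonempty $Z \subseteq Y_G$ with no isolated points, and $\bigcap\{B_z : z \in Z\} \supseteq \bigcap\{B_y : y \in Y_G\}$ still has positive measure.

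\textbf{The hard part will be} descending back to the ground model, because the witnessing set $Z$ lives in $M[G]$, not in $M$, and is not obviously coded by a ground-model object. The trick is to phrase the conclusion as a statement with enough absoluteness. What we have forced is: ``there is a perfect tree $T \subseteq X^{<\omega}$ (equivalently a continuous injection of $2^\omega$ into $X$) whose branch-set $Z$ satisfies $\lambda^*\!\left(\bigcap\{B_z : z \in Z\}\right) \geq \epsilon$.'' Here I would be careful to replace ``perfect set'' by a tree of finite sequences from a countable dense subset, so that the object being quantified over is (coded by) a real; and I would replace the intersection-has-positive-measure assertion by the existence of a rational $\delta > 0$ and, for each $n$, a finite union of rational intervals approximating $\bigcap\{B_z : z \in Z\}$ from inside to within $2^{-n}$ with measure $> \delta$ — all of which is arithmetic in the codes of the $B_x$ and of $T$. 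This makes the whole statement of the form ``there exists a real $r$ such that $\Phi(r, \langle \text{codes of } B_x \rangle)$'' with $\Phi$ involving only quantifiers over $\omega$ and over reals below the codes — in fact a $\Sigma^1_2$-type statement, or, after the usual tree-of-attempts reformulation, one to which Proposition~\ref{WF_abs} or Proposition~\ref{bounded_abs} applies. Since $\one \forces_{\Ameoba_\epsilon}$ (below $p$) this existential statement, and it is of a shape preserved downward from generic extensions to the ground model, such a real $r$ — hence such a perfect set $Y$ — exists in $M$ already. I expect the genuinely delicate bookkeeping to be (i) checking that ``uncountable subset of $\R$ contains a nonempty perfect-kernel-like subset with no isolated points'' can be run inside $M[G]$ and its output coded by a real, and (ii) verifying the absoluteness template of Section~\ref{formalism:sec} genuinely covers the measure-positivity clause; both are routine but must be done with care about what is coded by a real and what merely exists as a set.
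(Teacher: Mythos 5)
Your first half is exactly the paper's: shrink to $\lambda(B_x)>\epsilon$, view the $B_x$ as conditions in $\Ameoba_\epsilon$, use the c.c.c.\ (Proposition \ref{ameoba_ccc}) and Proposition \ref{generic_prop_K} to get a condition forcing $\dot Z:=\{x\in\check X:\check B_x\in\dot G\}$ to be uncountable. The gap is in your descent to the ground model. The statement you propose to pull back cannot be coded by a real: $X$ is an \emph{arbitrary} uncountable set of reals and $\Seq{B_x : x\in X}$ is an uncountably-indexed family, so ``$Y\subseteq X$'' is not an arithmetic or projective assertion in any real parameter, and the $\Sigma^1_2$-style absoluteness you invoke is neither available in this paper (Propositions \ref{bounded_abs}, \ref{WF_abs} and \ref{gen_abs_prop} are the only absoluteness tools developed, and none covers such a statement as you have phrased it) nor applicable. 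Worse, the strengthened conclusion you force and then try to transfer --- a \emph{perfect} $Y\subseteq X$ --- is simply false in general: a Bernstein set is uncountable but contains no perfect subset, so no correct absoluteness argument can produce such a $Y$ in the ground model. The proposition only asks for a nonempty $Y\subseteq X$ with no isolated points, and in the intended proof $Y$ is countable (dense-in-itself). Separately, your intermediate claim that in $M[G]$ the full intersection $\bigcap\{B_y : y\in Y_G\}$ has measure $\geq\epsilon$ because each $B_y$ contains ``the generic amoeba set'' is asserted without proof and concerns reinterpreted Borel sets in the extension; it is both unjustified as stated and unnecessary.

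The missing idea is to do the coding with a ground-model tree of \emph{finite approximations} rather than with a real. Before forcing, form the set $T$ of all finite increasing sequences $\Seq{\sigma_i : i<n}$ of finite subsets of $X$ such that each $\bigcap\{B_x : x\in\sigma_i\}$ has measure $>\epsilon$ and each point of $\sigma_i$ has another point of $\sigma_i$ within $1/i$. An infinite branch yields a countable $Y\subseteq X$ with no isolated points whose intersection has measure $\geq\epsilon$ (continuity of the measure from above; no perfect set is needed), and conversely. Since the nodes of $T$ are arbitrary ground-model sets, Proposition \ref{WF_abs} applies directly: if some condition in some forcing forces $T$ to have an infinite branch, then it has one outright, because the nonexistence of a branch is witnessed by a rank function, with no real-coding required. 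Your condition from Proposition \ref{generic_prop_K} does force this, using only that finite intersections of $\{\check B_x : x\in\dot Z\}$ lie in $\dot G$ and hence in $\check\Ameoba_\epsilon$ --- a statement about ground-model sets --- together with the fact that an uncountable set of reals (in the extension) contains a countable subset with no isolated points. With that replacement your argument becomes the paper's proof; as written, the descent step fails.
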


\begin{proof}
By replacing each $B_x$ with a subset if necessary, we may assume that each $B_x$ is compact.
Similarly, by replacing $X$ with a subset if necessary, we may assume that there is an
$\epsilon > 0$ such that if $x$ is in $X$, then $B_x$ has measure greater than $\epsilon$.
Let $T$ consist of all finite length sequences $\sigma = \Seq{\sigma_i : i < n}$ such that:
\begin{enumerate}
\popcounter

\item $\sigma$ is an increasing sequence of finite subsets of $X$;

\item $\bigcap \{B_x : x \in \sigma_i\}$ has measure greater than $\epsilon$ for all
$i < n$;

\item for each $i < n$, if $x$ is in $\sigma_i$, then there is
a $y$ distinct from $x$ in $\sigma_i$ such that $|x-y| < 1/i$.

\pushcounter
\end{enumerate}
Observe that if $\sigma$ is an infinite sequence all of whose initial parts are in $T$,
then $Y := \bigcup \{\sigma_i : i < \infty\}$ has no isolated points and
$\bigcap \{B_y :y \in Y\}$ has measure at least $\epsilon$.
Conversely, if there is a countable $Y \subseteq X$ with no isolated points and
$\bigcap_{x \in F} B_x$ has measure greater than $\epsilon$ whenever $F \subseteq Y$ is finite,
then $T$ has an infinite path.
Thus by Proposition \ref{WF_abs}, it is sufficient to show that the conclusion of the
proposition is forced by some condition in some forcing.

Consider the Amoeba forcing $\Ameoba_\epsilon$ and let
$\dot Z$ be the $\Qcal$-name for the set
$\{x \in \check X : \check B_x \in \dot G\}$.
Observe that every condition forces that the intersection of every finite subset of
$\{\check B_x : x \in \dot Z\} \subseteq \dot G$ is in $\dot G$ and hence
in $\check \Ameoba_\epsilon$.
By Proposition \ref{generic_prop_K},
there is a $q$ in $\Ameoba_\epsilon$ such that
$q$ forces that $\dot Z$ is uncountable.
By Property \ref{ZFC_forced} of the forcing relation,
$q$ forces that $\dot Z$ contains a countable subset $\dot Y$ with no isolated points.
This finishes the proof.
\end{proof}

\section{The Halpern-L\"auchli theorem*}

The Halpern-L\"auchli Theorem is a Ramsey-theoretic result concerning colorings of
products of finitely branching trees.
Before stating the theorem, we need to first define some terminology.
Recall that a subset $T$ of $\omega^{<\omega}$ is a \emph{tree} if it is closed under initial segments:
whenever $t$ is in $T$ and $s$ is an initial part of $t$, it follows that $s$ is in $T$.
A tree $T \subseteq \omega^{<\omega}$ comes equipped with a natural partial order:
$s \leq t$ if and only if $s$ is an initial part of $t$.
If $T \subseteq \omega^{<\omega}$ is a tree and $l < \omega$, the \emph{$l$th level}
of $T$ consists of all elements of $T$ of length $l$ and is denoted $(T)_l$.

All trees considered in this section will be assumed to be \emph{pruned} without further mention:
every element will have at least one immediate successor.
A tree $T \subseteq \omega^{<\omega}$ is \emph{finitely branching} if every element of $T$ has only finitely many immediate
successors in $T$.
If $S \subseteq T \subseteq \omega^{<\omega}$ are trees and $J \subseteq \omega$ is infinite,
then we say that
$S$ is a \emph{strong subtree of $T$ based on $J$} if whenever
$s$ is in $S$ with length in $J$, every immediate successor 
of $s$ in $T$ is in $S$.
The Halpern-L\"auchli Theorem can now be stated as follows.
 
\begin{thm} \cite{Halpern-Lauchli}
If $\Seq{T_i : i < d}$ is a sequence of finitely branching subtrees of
$\omega^{<\omega}$ and
\[
f : \bigcup_{l = 0}^\infty \prod_{i < d} (T_i)_l \to k
\]
then there exists an infinite set $L \subseteq \omega$ and strong subtrees
$S_i \subseteq T_i$ based on $L$ for each $i < d$ such that
$f$ is constant when restricted to $\bigcup_{l \in L} \prod_{i < d} (S_i)_l$.
\end{thm}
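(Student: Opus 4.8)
The plan is to prove the Halpern--L\"auchli theorem by forcing with a finite product of copies of Mathias-like forcings (or, more precisely, by a single forcing that adds a generic ``level branch'' through each tree) and then applying an absoluteness argument analogous to Proposition \ref{WF_abs} to extract the strong subtrees in the ground model. The guiding idea is that a strong subtree based on an infinite set $L$ is precisely the kind of object that can be built level-by-level, so a suitable ill-founded-tree coding will let us transfer the existence of a generic solution back to the ground model. The argument has three layers: (i) set up a tree $T$ of finite approximations to $(L, \Seq{S_i : i<d})$ together with the constraint that $f$ is constant on the relevant product levels, so that an infinite branch through $T$ yields the conclusion of the theorem; (ii) prove that it is consistent---i.e. forced by some condition in some forcing---that such a branch exists; (iii) invoke Proposition \ref{WF_abs} to conclude the branch exists outright.

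The heart of the matter is step (ii). Here I would force with $\Qcal$ equal to a product $\prod_{i<d} \Qcal_i$ where each $\Qcal_i$ adds a generic infinite ``path of nodes'' $\dot b_i$ through $T_i$ whose lengths all come from a common generic infinite set $\dot L \subseteq \omega$; one can arrange this, for instance, by taking conditions to be pairs $(a_i, A_i)$ in the spirit of Mathias forcing, where $a_i$ is a finite increasing sequence of nodes of $T_i$ with lengths in a finite set, and $A_i$ is a finitely branching subtree promising the future. Because each $T_i$ is finitely branching, for each candidate level $l$ the set $\prod_{i<d}(T_i)_l$ is \emph{finite}, so $f$ takes only finitely many values; the generic then selects, at each generic level $l \in \dot L$, a tuple of nodes, and by a pigeonhole/fusion argument inside $\Qcal$ one shrinks the promises $A_i$ so that $f$ is constant with some fixed value $c^* < k$ on all product-levels obtained along the generic. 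The key density argument---this is the main obstacle---is showing that one can always extend a condition to decide $f$ to be the \emph{same} color at the next generic level; this is exactly the combinatorial core of Halpern--L\"auchli and will require the analysis of how colorings behave under passing to strong subtrees. I expect this to be where the real work lies, and it may be unavoidable to prove a ``one-level'' or ``density'' version of the theorem by hand, with forcing merely packaging the iteration of that step cleanly; alternatively, one can push the entire combinatorial burden into verifying that the relevant dense sets are genuinely dense, using that the trees are pruned and finitely branching.

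Once step (ii) is in place, step (iii) is routine: let $T$ be the tree of finite initial segments of tuples $(L \cap n, \Seq{S_i \cap \omega^{<n} : i < d}, c^*)$ satisfying the finitary constraints (the $S_i$ are strong subtrees of $T_i$ below level $n$ based on $L \cap n$, and $f \equiv c^*$ on $\bigcup_{l \in L, l < n}\prod_{i<d}(S_i)_l$); this $T$ is a set of finite sequences closed under initial segments, an infinite branch through it yields exactly the $L$ and $S_i$ demanded by the theorem, and step (ii) shows some condition in $\Qcal$ forces such a branch to exist. Proposition \ref{WF_abs} then gives the branch in $V$, completing the proof. One technical point to handle with care: the conclusion $f$ is constant on $\bigcup_{l \in L}\prod_{i<d}(S_i)_l$ refers to infinitely many levels, so the coding tree $T$ must track the single color $c^*$ as part of its finite data (fixing it at the root), ensuring the infinite branch really does exhibit one color rather than permitting the color to drift.
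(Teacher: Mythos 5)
There is a genuine gap, and you identify it yourself: the ``key density argument'' in your step (ii) --- that any condition can be extended so that at the next generic level every tuple in the product of the chosen node-sets receives the \emph{same} prescribed color --- is not a routine density check but is precisely the dense-set form of the Halpern--L\"auchli theorem (Theorem \ref{HLD} in the paper). Forcing with Mathias-style conditions $(a_i,A_i)$ that promise future strong subtrees, and then invoking Proposition \ref{WF_abs} to pull a generic branch back to the ground model, only repackages the statement: the relevant sets are dense if and only if (essentially) the theorem you are trying to prove already holds, and nothing in your outline supplies an argument for that. So step (iii) is fine but vacuous until step (ii) is filled in, and filling it in ``by hand,'' as you concede may be unavoidable, means proving the theorem by other means; the forcing in your scheme contributes no leverage. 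Note also that the absoluteness transfer is not even needed in the paper's argument, since the homogeneous dense sets are extracted directly in the ground model from the forcing conditions, and the passage from the dense-set version to strong subtrees is a purely combinatorial observation requiring no forcing at all.

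The paper's (Harrington's) proof locates the combinatorial core somewhere quite different, which is exactly what makes the forcing genuinely useful there. One first reduces the full theorem to Theorem \ref{HLD}. To prove \ref{HLD} for a coloring $f$, one forces with finite partial functions from $\theta_{d-1}$ into $2^{<\omega}$ (a variant of $\Cohen_{\theta_{d-1}}$, where $\theta_0=\aleph_1$ and $\theta_{i+1}=(2^{\theta_i})^{++}$), so that each index $\xi<\theta_{d-1}$ names a generic branch $\dot r_\xi$. Fixing a name $\dot\Ucal$ for a nonprincipal ultrafilter on $\omega$, for each $\sigma\in\prod_{i<d}\theta_i$ one finds a condition $p_\sigma$ deciding the $\dot\Ucal$-stable color of the tuple of branches indexed by $\sigma$; the genuinely combinatorial work is then done by the Erd\H{o}s--Rado-type Lemmas \ref{ER_polar} and \ref{ER_Delta}, which homogenize the map $\sigma\mapsto(e_\sigma,l_\sigma,p_\sigma(\sigma(0)),\ldots,p_\sigma(\sigma(d-1)))$ over products of cofinal sets $H_i\subseteq\theta_i$ and make the $p_\sigma$'s pairwise compatible. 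The $(m,n)$-dense sets with constant color are then read off from a single condition $\bar q$ forcing a common level $n$ into all the relevant $\dot U_\sigma$'s. In other words, the pigeonhole burden is shifted from the trees to cardinal arithmetic on large index sets, rather than to a density property of a tree forcing --- and that shift is the missing idea in your proposal.
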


Unlike essentially all other Ramsey-theoretic statements concerning the countably infinite,
the full form of the Halpern-L\"auchli Theorem --- at least at present --- cannot be derived from the
machinery of  semigroup dynamics of spaces of ultrafilters
(see \cite{alg_betaN}, \cite{intro_Ramsey_spaces}).
The special case of the Halpern-L\"auchli Theorem for $n$-ary trees is a consequence
of a form of the Hales-Jewett Theorem, which can be proved using semigroup dynamics
--- see \cite{intro_Ramsey_spaces}.
The proof which is presented in this section is based on forcing and is an inessential modification
of an argument due to Leo Harrington
(see \cite{forcing_appl}).

In order to prove the Halpern-L\"auchli Theorem, we will derive it from the so-called \emph{dense set}
form of the theorem.
If $T \subseteq \omega^{<\omega}$ is a tree and $t$ is in $S$, then a set $D \subseteq T$ is
\emph{$(m,n)$-dense in $T$ above $t$}
if $D \subseteq (T)_n$ and whenever $u$ is in $(T)_m$ with $t \subseteq u$,
there is a $v$ in $D$ such that $u \subseteq v$.
If $t$ is the null string, then we just say that $D$ is $(m,n)$-dense in $T$.

\begin{thm} \label{HLD}
If $\Seq{T_i : i < d}$ is a sequence of finitely branching subtrees of $\omega^{<\omega}$ and
\[
f : \bigcup_{l = 0}^\infty \prod_{i < d} (T_i)_l \to k
\]
then there is an $l$ and a $\bar t$ in $\prod_{i < d} (T_i)_l$
such that for every $m \geq l$ there is an $n \geq m$ and sets
$\Seq{D_i : i < d}$ such that for each $i < d$, $D_i$ is $(m,n)$-dense above
$t_i$ in $T_i$ and such that $f$ is constant on $\prod_{i < d} D_i$.
\end{thm}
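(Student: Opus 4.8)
The plan is to reduce Theorem~\ref{HLD} to the assertion that its conclusion is \emph{forced by some condition in some forcing}, by an absoluteness argument in the spirit of Proposition~\ref{WF_abs}, and then to produce such a forcing following Harrington's argument. I would read the conclusion of Theorem~\ref{HLD} as a statement $\Phi$ with $\Seq{T_i : i < d}$, $f$, $k$ and $\omega$ as parameters. Since each $T_i$ is finitely branching, every level $(T_i)_n$ is finite, so the clauses ``$D_i$ is $(m,n)$-dense above $t_i$ in $T_i$'' and ``$f$ is constant on $\prod_{i<d} D_i$'' unfold into formulas with only bounded quantification over the ground-model sets $\prod_{i<d} T_i$ and $\prod_{i<d}\mathcal P(T_i)$; the remaining quantifiers ``$\exists l$'', ``$\forall m \ge l$'', ``$\exists n$'' range over $\omega$, and ``$\exists\bar t$'' over $\prod_{i<d}T_i$. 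Hence both $\Phi$ and $\neg\Phi$ are expressible with only bounded quantification over ground-model sets. If $\Phi$ were false then $\neg\Phi$ would be true, so Proposition~\ref{bounded_abs} would give $\one \forces \neg\Phi$ in every forcing, and then---by monotonicity of $\forces$ and the consistency clause of Property~\ref{ZFC_forced}---no condition in any forcing could force $\Phi$. So it suffices to exhibit a forcing $\Qcal$ and some $q \in \Qcal$ with $q \forces \Phi$, exactly as Proposition~\ref{WF_abs} was deployed in Section~\ref{ameoba:sec}.

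For the forcing, I would fix a cardinal $\kappa$ large enough that, even after the c.c.c.\ forcing below, a partition relation $\kappa \to (\aleph_1)^d_{2^{\aleph_0}}$ of Erd\H{o}s--Rado type holds (this is arrangeable in ZFC), and let $\Qcal$ be the finite-support product, over $(i,\alpha) \in d \times \kappa$, of the forcings $(T_i,\supseteq)$, each $T_i$ ordered by reverse end-extension. Forcing with $(T_i,\supseteq)$ adjoins a branch through $T_i$, so $\Qcal$ adjoins mutually generic branches $\dot b^i_\alpha \in [T_i]$; since each factor is countable, a $\Delta$-system argument shows $\Qcal$ is c.c.c., so by Proposition~\ref{ccc_card} cardinals are preserved. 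In a generic extension by $\Qcal$ I would then carry out three moves. First, a pigeonhole on the finite set $\prod_{i<d}(T_i)_l$ fixes a level $l$ and a tuple $\bar t \in \prod_{i<d}(T_i)_l$ realized as $\Seq{b^i_\alpha \restriction l : i < d}$ by $\kappa$ many indices $\alpha$; call this set $A_0$. Second, Erd\H{o}s--Rado applied to the colouring of $[A_0]^d$ that sends an increasing tuple $\alpha_0 < \dots < \alpha_{d-1}$ to the sequence $\Seq{f(b^0_{\alpha_0}\restriction n, \dots, b^{d-1}_{\alpha_{d-1}}\restriction n) : n < \omega}$ of colours of the mixed tuple of branch restrictions---a colouring with at most $2^{\aleph_0}$ colours---yields a homogeneous set $A \subseteq A_0$ with $|A| = \aleph_1$. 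Third, for each $m \ge l$, using that the $\kappa$ mutually generic branches are dense through $T_i$ (every node of $T_i$ is an initial segment of $b^i_\alpha$ for many $\alpha$), one chooses a strictly increasing finite block of indices from $A$, laid out as $E_0 < E_1 < \dots < E_{d-1}$, so that each $D_i := \{b^i_\alpha \restriction n : \alpha \in E_i\}$ is $(m,n)$-dense above $t_i$ for a suitable $n \ge m$; then every transversal of $\prod_{i<d} D_i$ equals $\Seq{b^i_{\alpha_i}\restriction n : i < d}$ for an increasing tuple $\alpha_0 < \dots < \alpha_{d-1}$ drawn from $A$, so homogeneity of $A$ makes $f$ constant on $\prod_{i<d} D_i$. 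This supplies, above $\bar t$, exactly the $n$ and $\Seq{D_i : i < d}$ that $\Phi$ demands, so $\one \forces_\Qcal \Phi$ and the first step completes the proof.

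The hard part will be the coordination of these three moves: Erd\H{o}s--Rado homogeneity of the index set $A$ is in tension with the requirement that $A$ stay ``spread through the trees''---that above every $m$-node over $t_i$ there remain an index in $A$ whose branch passes through it---since a homogeneous set could in principle cluster the branch restrictions, whereas $A_0$ before thinning does meet every such $m$-node. Reconciling the two will force the level $l$, the tuple $\bar t$, and the precise form of the Erd\H{o}s--Rado colouring to be chosen together, and one must be careful about whether the colouring (and hence the partition relation) is applied in the ground model or in the extension and about keeping the number of colours below the Erd\H{o}s--Rado threshold after the c.c.c.\ forcing. The remaining ingredients---the product-lemma reduction of the relevant forcing facts to the countable sub-forcing $\prod_{i<d}(T_i,\supseteq)$, the c.c.c.\ bookkeeping, and the bounded-quantifier packaging of the first step---should be routine.
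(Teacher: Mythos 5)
Your opening reduction is fine (though the paper does not even need it: its argument never leaves the ground model, since Property \ref{decide} produces the witnesses $l,\bar t, n, D_i$ directly as decided values), and the overall shape---many mutually generic branches indexed by large cardinals, then partition calculus, then read off dense level sets---is indeed Harrington's. But the central step as you state it fails. You color increasing $d$-tuples of indices by the whole sequence $\Seq{f(b^0_{\alpha_0}\restriction n,\ldots,b^{d-1}_{\alpha_{d-1}}\restriction n) : n<\omega}$, a coloring into $k^\omega$, i.e.\ into continuum many colors, and you want to homogenize it \emph{in the extension}. Your forcing adds $\kappa$ branches, hence $\kappa$ reals, so in the extension $2^{\aleph_0}\geq\kappa$, and no relation of the form $\kappa\to(\aleph_1)^d_{2^{\aleph_0}}$ can hold there: with at least $\kappa$ colors one can color all $d$-tuples with distinct colors, so there is no homogeneous set of size $d+1$, let alone $\aleph_1$. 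This is not a matter of ``keeping the number of colours below the threshold''; with this coloring the number of colors is intrinsically the continuum of the extension. The paper's way around this is precisely the ingredient your outline lacks: a $\Qcal$-name $\dot\Ucal$ for a nonprincipal ultrafilter on $\omega$, which replaces the $\omega$-sequence of colors by a single $\dot\Ucal$-majority color $e_\sigma\in k$; a condition $p_\sigma$ decides $e_\sigma$, and the coloring that gets homogenized is the \emph{ground-model} map $\sigma\mapsto\big(e_\sigma,l_\sigma,p_\sigma(\sigma(0)),\ldots,p_\sigma(\sigma(d-1))\big)$, which has only countably many values and is handled by the polarized partition Lemma \ref{ER_polar} together with the $\Delta$-system Lemma \ref{ER_Delta} over the graded cardinals $\theta_0=\aleph_1$, $\theta_{i+1}=(2^{\theta_i})^{++}$, not by ordinary Erd\H{o}s--Rado on $[\kappa]^d$ in the extension. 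The ultrafilter is also what makes ``for every $m$ there is an $n$'' provable: the sets $\dot U_\sigma$ of good levels for the finitely many relevant $\sigma$ all lie in $\dot\Ucal$, so a single $n\geq m$ is forced into all of them.

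The two difficulties you flag at the end are not loose ends but the actual content of the proof, and your first and third moves do not survive them. The pigeonhole ``fix $l$ and $\bar t$ realized by $\kappa$ many indices'' is vacuous (for $l=0$ every index works) and cannot locate the right $\bar t$; the relativization to $\bar t$ is essential (color nodes of $2^{<\omega}$ by their first bit: no $(m,n)$-dense set above the root is monochromatic), and in the paper $l$ and the $t_i$ emerge only through the homogenization, as the common stem that the deciding conditions $p_\sigma$ commit at the coordinates $\sigma(i)$. Likewise, homogeneity gives no control over where the branches indexed by $A$ go: an $\aleph_1$-sized subset of the $\kappa$ branches can cluster through a single node at level $m$, so ``the mutually generic branches are dense through $T_i$'' does not yield density \emph{within} $A$. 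The paper reconciles density with homogeneity not by hoping the homogeneous set is spread out, but by \emph{building} a condition: the $p_\sigma$ for $\sigma$ in the homogeneous product are pairwise compatible (Lemma \ref{ER_Delta}), so one takes a common extension $q$ forcing prescribed restrictions $\dot r_\alpha\restriction m = t_i\cat u$ for a chosen bijection of $A_i\subseteq H_i$ with $2^{m-l}$, then extends to $\bar q$ forcing a common good level $n$, and reads the $D_i$ off $\bar q$. To repair your proposal you would need to (i) replace the sequence coloring by the ultrafilter-stabilized color, (ii) fold the stems of the deciding conditions into the coloring and homogenize in the ground model with the graded $\theta_i$, and (iii) obtain density by extending conditions rather than from genericity---at which point you have reconstructed the paper's proof.
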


The original form of the Halpern-L\"auchli Theorem is an
immediate consequence of the dense set version and the following observation.

\begin{obs}
Let $T \subseteq \omega^{<\omega}$ be a tree and $t$ be an element of $T$.
If $\Seq{D_p : p < \infty }$ is a sequence of subsets of $T$
such that for some increasing sequence
$\Seq{m_p : p < \infty }$, $D_p$ is $(m_p,m_{p+1})$-dense in $T$ above $t$,
then the downward closure of $\bigcup_{p =0}^\infty D_p$ contains a strong subtree of
$T$ which is based on $\{m_p : p < \infty \}$. 
\end{obs}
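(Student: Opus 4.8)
The plan is to prove the Observation in two steps. The first step, slightly surprisingly, is to show that the downward closure of $\bigcup_{p} D_p$ already contains the whole subtree $T_t := \{w \in T : w \subseteq t \mor t \subseteq w\}$ of $T$ --- the ``stem and cone at $t$'' --- so the density hypothesis does considerably more than the stated conclusion requires. The second step is then the routine fact that a pruned tree such as $T_t$ contains a strong subtree of $T$ based on any increasing sequence of levels that starts at or above $|t|$; feeding $\Seq{m_p : p < \infty}$ into this construction, and using the first step to keep the subtree inside $\cl(\bigcup_p D_p)$, finishes the proof.

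For the first step I would argue as follows. We may assume $m_0 \geq |t|$, since otherwise the density requirement on $D_0$ is vacuous (and in the intended applications $m_0 = |t|$). Fix $w \in T$ with $t \subseteq w$ and put $k := |w|$. Since $\Seq{m_p : p < \infty}$ is strictly increasing it is cofinal in $\omega$, so choose $p$ with $m_p \geq k$. Because $T$ is pruned, $w$ has an extension $u \in (T)_{m_p}$; then $t \subseteq u$, so by the $(m_p, m_{p+1})$-density of $D_p$ above $t$ there is a $v \in D_p$ with $u \subseteq v$, whence $t \subseteq w \subseteq v$. Thus $w$ lies in the downward closure of $\bigcup_p D_p$, and so does $t$ together with all of its initial segments (the $v$ just produced extends $t$). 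This gives $T_t \subseteq \cl(\bigcup_p D_p)$.

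For the second step I would construct the required strong subtree $S \subseteq T_t$ recursively, level by level. Let $u_0 \in (T)_{m_0}$ be a node with $t \subseteq u_0$ --- it exists since $T$ is pruned and $m_0 \geq |t|$ --- and take it to be the root of $S$. Given the nodes of $S$ at height $m_p$, each extending $u_0$, process each such node $s$ together with each immediate successor $s'$ of $s$ in $T$: using again that $T$ is pruned, choose an extension $v \supseteq s'$ with $|v| = m_{p+1}$, let the resulting $v$'s be the nodes of $S$ at height $m_{p+1}$, and adjoin to $S$ every node on the path from $s'$ up to its $v$. Finally put into $S$ all initial segments of $u_0$, and let $S$ be the union of everything so selected. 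Then $S \subseteq T$ is closed under initial segments; $S \subseteq T_t \subseteq \cl(\bigcup_p D_p)$; and by construction, whenever a node $s$ of $S$ has height equal to some $m_p$ --- equivalently, $s$ is a node explicitly chosen during the recursion --- every immediate successor of $s$ in $T$ has been placed in $S$. Hence $S$ is a strong subtree of $T$ based on $\{m_p : p < \infty\}$, completing the argument.

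I do not expect a serious obstacle here. The tempting but unproductive route is to try to thread $S$ directly through the sets $D_p$ node by node; this stalls because $(m_p,m_{p+1})$-density delivers only \emph{one} extension of a given height-$m_p$ node rather than one extension for each of its immediate successors, and observing first that $T_t \subseteq \cl(\bigcup_p D_p)$ sidesteps the issue. The one point genuinely requiring care is the bookkeeping in the second step: one must check that the only nodes of $S$ whose height lies in $\{m_p : p < \infty\}$ are those selected at some stage --- the interpolating path-nodes have heights strictly between consecutive $m_p$'s (these are consecutive because the sequence is increasing), and the stem nodes below $u_0$ have heights below $m_0$ --- so that the branching condition in the definition of ``strong subtree based on $\{m_p\}$'' is imposed exactly at the levels where the construction has secured it.
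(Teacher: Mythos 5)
The paper states this as an Observation with no proof given, so there is nothing to diverge from; your argument is correct and is essentially the routine construction the author intends: every node of $T$ extending $t$ lies below an element of some $D_p$ (extend it to level $m_p$ by prunedness and apply density), so the downward closure contains the whole cone above $t$ together with the stem, inside which prunedness alone yields a subtree branching fully at exactly the levels $m_p$. One caveat: your opening ``we may assume $m_0 \geq |t|$'' is not a harmless normalization but an implicit hypothesis of the statement itself --- if $m_0 < |t|$ and $T$ branches below $t$, no nonempty strong subtree based on $\{m_p : p < \infty\}$ need fit inside the downward closure (take $T = 2^{<\omega}$, $t = \langle 0\rangle$, $m_p = p$, and $D_p$ the nodes of length $p+1$ extending $t$; the closure omits $\langle 1\rangle$, yet any nonempty strong subtree based on $\{0,1,2,\dots\}$ must contain it) --- but since, as you note, the Halpern--L\"auchli application always has $m_0 = |t|$, reading the Observation with that proviso is exactly right.
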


It is also not difficult to see that, unlike the standard formulation of the
Halpern-L\"auchli Theorem, the special case of Theorem \ref{HLD} in which each
$T_i$ is $2^{<\omega}$ is equivalent to the theorem in its full generality.

Harrington's proof of the Halpern-L\"auchli theorem uses the forcing relation to
reduce the desired Ramsey-theoretic properties of trees to Ramsey-theoretic properties
of cardinals.
In the proof we will need some standard definitions
and facts from combinatorial set theory (see, e.g., \cite[Ch.II]{set_theory:Kunen}).
If $\kappa$ is a regular cardinal, a subset $S$ of $\kappa$ is
\emph{stationary} if it intersects every closed and unbounded subset of $\kappa$.
Clearly every stationary subset of $\kappa$ has cardinality $\kappa$.
Furthermore, if $\mu$ is an infinite cardinal less than $\kappa$, then 
the set of all ordinals in $\kappa$ of cofinality $\mu$ is stationary.
We will need the following property of stationary sets.

\begin{lem}[Pressing Down Lemma; see \cite{set_theory:Kunen}] \label{PDL}
Suppose that $\theta$ is a regular cardinal
and $S \subseteq \theta$ is a stationary set.
If $r:S \to \theta$ satisfies that
$r(\xi) < \xi$ for all $\xi \in S$,
then $r$ is constant on a stationary subset of $S$.
In particular if a stationary subset of $\theta$ is partitioned
into fewer than $\theta$ sets, then
one of the pieces of the partition is stationary.
\end{lem}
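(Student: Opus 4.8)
The plan is to prove the displayed assertion by contradiction using a diagonal intersection of clubs, and then to read off the ``in particular'' clause from it. So suppose toward a contradiction that the regressive map $r \colon S \to \theta$ is not constant on any stationary subset of $S$. Then for every $\alpha < \theta$ the fibre $S_\alpha := \{\xi \in S : r(\xi) = \alpha\}$ is non-stationary, so one may fix a closed unbounded set $C_\alpha \subseteq \theta$ with $C_\alpha \cap S_\alpha = \emptyset$. Consider the diagonal intersection
\[
C := \{\xi < \theta : \xi \in C_\alpha \textrm{ for every } \alpha < \xi\}.
\]

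The crux is to verify that $C$ is closed and unbounded in $\theta$. Closure is routine: if $\eta < \theta$ is a limit point of $C$, then for each $\alpha < \eta$ cofinally many points of $C$ below $\eta$ lie in $C_\alpha$, whence $\eta \in C_\alpha$ since $C_\alpha$ is closed, and so $\eta \in C$. For unboundedness --- the only place where the regularity of $\theta$ (and $\cf(\theta) > \omega$) is genuinely used --- given $\xi_0 < \theta$ one recursively chooses $\xi_0 < \xi_1 < \cdots$ with $\xi_{n+1} \in \bigcap_{\alpha < \xi_n} C_\alpha$, which is possible because an intersection of fewer than $\theta$ clubs is again a club; then $\sup_n \xi_n < \theta$ lies in $C$. (This is the standard fact that diagonal intersections of clubs are club, for which one could instead simply invoke \cite{set_theory:Kunen}.) Now, since $S$ is stationary, pick $\xi \in S \cap C$ and set $\beta := r(\xi) < \xi$. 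The definition of $C$ forces $\xi \in C_\beta$, while $\xi \in S_\beta$ by the choice of $\beta$; this contradicts $C_\beta \cap S_\beta = \emptyset$ and proves the main claim.

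For the concluding clause, suppose $S$ is stationary and $S = \bigcup_{i < \mu} S_i$ with $\mu < \theta$ (after re-indexing the pieces by an ordinal below $\theta$). The set $S' := \{\xi \in S : \xi > \mu\}$ is again stationary, since for any club $D \subseteq \theta$ the set $D \setminus (\mu + 1)$ is also a club and hence meets $S$. Define $r \colon S' \to \theta$ by letting $r(\xi)$ be the least $i$ with $\xi \in S_i$; then $r(\xi) < \mu < \xi$, so $r$ is regressive, and by the first part it is constant with some value $i$ on a stationary $S'' \subseteq S'$, whence $S'' \subseteq S_i$ and $S_i$ is stationary. The one real obstacle in all of this is the club-ness of the diagonal intersection $C$, and even that reduces to the familiar closing-off argument sketched above, using nothing beyond the regularity of $\theta$.
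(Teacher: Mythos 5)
Your proof is correct: it is the standard Fodor argument via the diagonal intersection $C=\{\xi<\theta : \xi\in C_\alpha \textrm{ for all } \alpha<\xi\}$, and the deduction of the ``in particular'' clause by restricting to $\{\xi\in S : \xi>\mu\}$ and pressing down on the least index is exactly right. Note that the paper does not prove this lemma at all --- it is quoted from Kunen --- so there is nothing to compare against beyond observing that yours is the standard textbook proof. One small point worth making explicit: as you tacitly acknowledge in the unboundedness step, the argument (and the lemma itself) requires $\theta$ to be regular and \emph{uncountable}; for $\theta=\omega$ the regressive map $r(n)=n-1$ is a counterexample. This is harmless here, since in the paper the lemma is only applied with $\theta$ a successor of an infinite cardinal, but if you state the hypothesis as bare regularity you should flag the uncountability assumption.
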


We will need the following variant of the $\Delta$-System Lemma.

\begin{lem} \label{base_case}
Suppose that $X$ is a set, $\theta$ is
the successor of a regular cardinal,
and $\{p_\xi : \xi < \theta\}$
is a family of partial functions from $X$
to $2$ such that for every $\xi < \theta$,
$2^{|\dom(p_\xi)|} < \theta$.
Then there exists a cofinal $H \subseteq \theta$ such that
$\bigcup_{\xi \in H} p_\xi$ is a function.
\end{lem}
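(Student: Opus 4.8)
The plan is to run the classical $\Delta$-System Lemma argument on the domains of the $p_\xi$ and then refine once more by the values the functions take on the common root, so that the resulting functions become pairwise compatible. Write $\theta = \mu^+$ with $\mu$ regular. First I would normalize: since $|\dom p_\xi| \ge \mu$ would give $2^{|\dom p_\xi|} \ge 2^\mu \ge \mu^+$, the hypothesis forces $|\dom p_\xi| < \mu$; and there are at most $\mu$ cardinals below $\mu$, so by regularity of $\theta$ I may pass to a set of $\theta$ many indices on which $|\dom p_\xi|$ has a single value $\rho < \mu$ (note $2^\rho \le \mu$). Reindexing a set of size at most $\theta$, I may also assume each $\dom p_\xi$ is a subset of the ordinal $\theta$.

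Next I would extract a $\Delta$-system. Fix a stationary $S \subseteq \theta$ contained in $\{\xi : \cf \xi > \rho\}$ (such an $S$ exists, e.g. $\{\xi : \cf\xi = \mu\}$, as $\rho < \mu < \theta$). For $\xi \in S$ the set $\dom p_\xi \cap \xi$ has size $\rho < \cf\xi$ and so is bounded in $\xi$; setting $g(\xi) := \sup(\dom p_\xi \cap \xi)$ (and $g(\xi)=0$ if the set is empty) defines a regressive function on $S$. By the Pressing Down Lemma (Lemma~\ref{PDL}) there are a stationary $S_0 \subseteq S$ and a $\gamma^* < \theta$ with $\dom p_\xi \cap \xi \subseteq \gamma^*+1$ for $\xi \in S_0$, so in fact $\dom p_\xi \cap \xi = \dom p_\xi \cap (\gamma^*+1)$ there. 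Pigeonholing the traces $\dom p_\xi \cap (\gamma^*+1)$ into fewer than $\theta$ classes (see the last paragraph), I obtain a set $W$ of size $\theta$ and a fixed $R \subseteq \gamma^*+1$ with $\dom p_\xi \cap \xi = R$ for all $\xi \in W$. As $W$ is cofinal in $\theta$, I can thin it to an increasing sequence $\Seq{\xi_\alpha : \alpha < \theta}$ from $W$ in which $\xi_\alpha$ also exceeds $\sup \bigcup_{\beta<\alpha} \dom p_{\xi_\beta}$; then for $\beta < \alpha$ one has $\dom p_{\xi_\beta} \subseteq \xi_\alpha$ and $\dom p_{\xi_\alpha} \cap \xi_\alpha = R \subseteq \dom p_{\xi_\beta}$, whence $\dom p_{\xi_\alpha} \cap \dom p_{\xi_\beta} = R$. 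Thus $\{\dom p_{\xi_\alpha} : \alpha < \theta\}$ is a $\Delta$-system with root $R$.

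It remains to handle the values. Since $R \subseteq \dom p_\xi$ we have $|R| \le \rho$, so there are at most $2^\rho \le \mu < \theta$ functions from $R$ to $2$; partitioning $\{\xi_\alpha : \alpha < \theta\}$ by the restriction of $p_\xi$ to $R$ and using regularity of $\theta$ once more yields a cofinal $H \subseteq \theta$ on which this restriction is constant. For $\xi, \eta \in H$ the functions $p_\xi, p_\eta$ satisfy $\dom p_\xi \cap \dom p_\eta = R$ and agree on $R$, hence are compatible; a family of pairwise compatible functions has a union which is a function, so $\bigcup_{\xi \in H} p_\xi$ is a function, and $H$ is cofinal in $\theta$, as required.

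The step I expect to be the crux is the one deferred above: pigeonholing the traces $\dom p_\xi \cap (\gamma^*+1)$ into fewer than $\theta$ classes. This is immediate when the conditions have finite domains — the case actually needed for the application to the Halpern-L\"auchli theorem, where the $p_\xi$ come from a forcing whose conditions have finite domains — since then there are at most $|\gamma^*+1| < \theta$ possible traces. In general this is precisely the point at which the hypothesis $2^{|\dom p_\xi|} < \theta$, together with the assumption that $\theta$ is the successor of a regular cardinal, comes into play; everything else is routine manipulation of regressive functions and pigeonhole arguments, once the Pressing Down Lemma is in hand.
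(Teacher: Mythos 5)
Your argument is essentially the paper's: normalize, trap the traces $\dom (p_\xi) \cap \xi$ below a fixed ordinal via a club of closure points and the Pressing Down Lemma, pigeonhole, and then check compatibility on the common part. The only structural difference is cosmetic: you pigeonhole the domains first (building an explicit $\Delta$-system by a further thinning) and then the values on the root $R$, whereas the paper pigeonholes the whole restriction $p_\xi \restriction \xi$ in one step and uses membership in the club $E$, rather than a thinning, to see that overlaps land inside the root. Every step you actually carry out is correct.

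The gap is exactly the step you defer: that the traces $\dom (p_\xi) \cap (\gamma^*+1)$ fall into fewer than $\theta$ classes. You verify this only for finite domains and then assert that in general ``this is precisely the point at which the hypothesis $2^{|\dom(p_\xi)|} < \theta$ comes into play,'' but no argument is given, and the bare hypothesis does not deliver it: the pigeonhole needs the number of size-$\rho$ subsets of $\gamma^*+1$ to be less than $\theta$, i.e.\ roughly $|\gamma^*|^{\rho} \le \mu$, and $2^{\rho}\le\mu$ does not imply this in ZFC. For instance with $\rho = \aleph_0$, $\mu = \aleph_{\omega+1}$ (regular, so $\theta=\aleph_{\omega+2}$ is allowed), it is consistent that CH holds (so $2^{\rho}<\theta$) while $\aleph_\omega^{\aleph_0} > \aleph_{\omega+2}$; since nothing bounds $|\gamma^*|$ below $\mu$, the number of possible traces can then reach $\theta$ and the pigeonhole is simply not available from the stated hypothesis. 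To be fair, the paper's own proof dispatches the identical point with the words ``by the pigeonhole principle,'' and in the only case the paper uses (Lemma \ref{ER_Delta} and the Halpern--L\"auchli argument, where the $p_\xi$ are finite) your observation closes the step completely; but as a proof of the lemma for arbitrary, possibly infinite, domains, the deferred step is not justified by the hypothesis you cite, and you would need either an extra cardinal-arithmetic assumption (e.g.\ $|\alpha|^{|\dom(p_\xi)|} < \theta$ for all $\alpha<\theta$) or a genuinely different argument at that point.
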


\begin{proof}
Set $\theta := \kappa^+$.
Observe that by replacing $X$ with the union of the domains of the $p_\xi$'s if necessary,
we may assume that $|X| \leq \theta$ and thus moreover that $X \subseteq \theta$.
For each $\xi < \theta$,
define $a_\xi := \dom (p_\xi) \cap \xi$.
Observe that $|\dom(p_\xi)|$ must be less than $\kappa$ for each $\xi$
and thus $a_\xi$ is a bounded subset of $\xi$ whenever $\cf(\xi) = \kappa$.
Let $E \subseteq \theta$ consist of all $\delta$ such that
if $\xi < \delta$, then $\sup (\dom (p_\xi)) < \delta$.
It is easily checked that $E$ is a closed and unbounded set.
By Lemma \ref{PDL}, there is a stationary $S \subseteq E$ consisting of
ordinals of cofinality $\kappa$ and a $\zeta$ such that
if $\xi$ is in $S$, $\sup a_\xi < \zeta$.
By the pigeonhole principle, there is a stationary set
$H \subseteq S$ and partial function $r$ from $\theta$ to $2$
such that if $\xi$ is in $H$, then $p_\xi \restriction \xi = r$.
Now if $\xi < \eta$ are in $H$, then
$p_\xi \bigcup p_\eta$ is a function.
To see this, suppose that $\alpha$ is in $\dom(p_\xi) \cap \dom(p_\eta)$.
Since $\eta$ is in $E$, it must be that $\alpha < \eta$.
Thus $\alpha$ is in $a_\xi = a_\eta = \dom (r)$ and hence
$p_\xi(\alpha) = r(\alpha) = p_\eta(\alpha)$.
\end{proof}

Next, we will need two closely related Ramsey-theoretic statements which are relatives
of the Erd\H{o}s-Rado Theorem but which have simpler proofs.

\begin{lem} \label{ER_polar}
Suppose that $\Seq{\theta_i : i < d}$ is a sequence of uncountable
regular cardinals satisfying $2^{\theta_i} < \theta_{i+1}$ if $i < d-1$.
If $f:\prod_{i < d} \theta_i \to \omega$,
then there exist cofinal sets $H_i \subseteq \theta_i$ for each $i < d$
such that $f$ is constant on $\prod_{i < d} H_i$.
\end{lem}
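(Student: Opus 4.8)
The plan is to prove Lemma \ref{ER_polar} by induction on $d$. The base case $d = 1$ is immediate: a function $f : \theta_0 \to \omega$ partitions the uncountable regular cardinal $\theta_0$ into countably many pieces, so by regularity one piece has size $\theta_0$ and hence is cofinal; take $H_0$ to be that piece. For the inductive step, suppose the result holds for sequences of length $d-1$ and let $f : \prod_{i < d}\theta_i \to \omega$ be given.

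First I would, for each $\alpha < \theta_{d-1}$, consider the slice $f_\alpha : \prod_{i < d-1}\theta_i \to \omega$ defined by $f_\alpha(\bar\xi) := f(\bar\xi \cat \Seq{\alpha})$. By the induction hypothesis applied to $f_\alpha$ (the cardinals $\theta_0, \ldots, \theta_{d-2}$ satisfy the required gap condition), there are cofinal sets $H_i^\alpha \subseteq \theta_i$ for $i < d-1$ on whose product $f_\alpha$ is constant with some value $c(\alpha) \in \omega$. The key counting point is that the number of possible "types" $(H_0^\alpha, \ldots, H_{d-2}^\alpha, c(\alpha))$ can be bounded: we may as well assume each $H_i^\alpha$ is chosen to be a set of size $\theta_i$, so there are at most $\prod_{i<d-1} 2^{\theta_i} \cdot \aleph_0 = 2^{\theta_{d-2}} < \theta_{d-1}$ such types. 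Since $\theta_{d-1}$ is regular and we are partitioning it into fewer than $\theta_{d-1}$ classes according to type, one class $C \subseteq \theta_{d-1}$ is cofinal; let $H_{d-1} := C$, and let $(H_0, \ldots, H_{d-2}, c)$ be the common type. Then for every $\alpha \in H_{d-1}$ and every $\bar\xi \in \prod_{i<d-1}H_i$ we have $f(\bar\xi \cat \Seq{\alpha}) = f_\alpha(\bar\xi) = c$, so $f$ is constant on $\prod_{i<d}H_i$, as required.

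The main obstacle — and the only place any real care is needed — is the cardinal arithmetic bounding the number of types so that the Pressing Down / pigeonhole step on $\theta_{d-1}$ actually applies. One must be careful that the induction hypothesis can be invoked \emph{uniformly} with the cofinal sets taken to live inside a fixed family of size at most $\prod_{i<d-1}\theta_i^{\theta_i}$; in fact it is cleaner to fix at the outset, for each $i < d-1$, an enumeration of $[\theta_i]^{\theta_i}$ of length $2^{\theta_i}$, and to record for each $\alpha$ the indices of the chosen sets rather than the sets themselves. The total number of parameter tuples is then $\prod_{i < d-1} 2^{\theta_i} \cdot \aleph_0$, and the hypotheses $2^{\theta_i} < \theta_{i+1}$ together with the regularity of $\theta_{d-1}$ (and the fact that a product of fewer than $\theta_{d-1}$ cardinals each below $\theta_{d-1}$, with $\theta_{d-1}$ regular, is below $\theta_{d-1}$) guarantee this is $< \theta_{d-1}$. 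Everything else is a routine unwinding of definitions.
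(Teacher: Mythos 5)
Your proof is correct and takes essentially the same route as the paper's: induct on $d$, apply the induction hypothesis to each slice $f_\alpha$, and pigeonhole over $\theta_{d-1}$ (via $\prod_{i<d-1}2^{\theta_i}\cdot\aleph_0 = 2^{\theta_{d-2}} < \theta_{d-1}$ and regularity) to stabilize both the color and the tuple of cofinal sets simultaneously. One minor caution: your parenthetical claim that a product of fewer than $\theta_{d-1}$ cardinals, each below $\theta_{d-1}$, is below $\theta_{d-1}$ when $\theta_{d-1}$ is regular is false in general (e.g. $\aleph_0$ many copies of $2$ against $\aleph_1$); it is harmless here only because your product has just $d-1 < \omega$ factors, and the correct finite computation already appears in your second paragraph.
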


\begin{proof}
The proof is by induction on $d$.
If $d$ is given and $\xi < \theta_{d-1}$, fix
$H_i^\xi \subseteq \theta_i$ for each $i < d-1$ such that
$f$ takes the constant value $g(\xi)$ on
\[
\left( \prod_{i < d-1} H_i \right) \times \{\xi\}
\]
(if $d=1$, then the product over the emptyset is the trivial map with
domain $\emptyset$ and this is vacuously true).
By applying the pigeonhole principle and our cardinal arithmetic assumption,
there is an $H_{d-1} \subseteq \theta_{d-1}$ such that $g$ is constant on $H_{d-1}$ and
$H_i^\xi$ does not depend on $\xi$ for $\xi \in H_{d-1}$.
It follows that $f$ is constant when restricted to
$\prod_{i < d} H_i$, where $H_i := H_i^\xi$ for some (equivalently any) $\xi$ in $H_{d-1}$.
\end{proof}

\begin{lem} \label{ER_Delta}
Suppose that $X$ is a set and $\Seq{\theta_i : i < d}$
is a sequence of successors of infinite regular cardinals such that
$2^{\theta_i} < \theta_{i+1}$ if $i < d-1$.
If $\{p_\sigma : \sigma \in \prod_{i < d} \theta_i\}$
is a family of finite partial functions from $X$ into a countable set,
then there are $H_i \subseteq \theta_i$ of cardinality $\theta_i$
such that
\[
\bigcup \{p_\sigma : \sigma \in \prod_{i < d} H_i\}
\]
is a function.
\end{lem}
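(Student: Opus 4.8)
The plan is to prove this by induction on the dimension $d$, using Lemma \ref{base_case} (in a countably‑valued form) to handle the top coordinate. The base case $d = 0$ is trivial, and $d = 1$ is a routine $\Delta$‑system argument: if $\{p_\xi : \xi < \theta_0\}$ are finite partial functions into a fixed countable set, first thin --- using the regularity and uncountability of $\theta_0$ --- to a subset of size $\theta_0$ on which $|\dom(p_\xi)|$ is a fixed finite $n$; then apply the $\Delta$‑system lemma for finite sets to thin to a subset of size $\theta_0$ on which $\{\dom(p_\xi)\}$ forms a $\Delta$‑system with some root $R$ of size at most $n$; and finally thin once more --- there are only countably many partial functions from $R$ into a countable set --- to a set $H_0$ of size $\theta_0$ on which $p_\xi \restriction R$ is constant. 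Then $\bigcup_{\xi \in H_0} p_\xi$ is a function.

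For the inductive step I would assume $d \ge 2$ and the statement for $d - 1$, and write a typical element of $\prod_{i<d}\theta_i = \bigl(\prod_{i<d-1}\theta_i\bigr) \times \theta_{d-1}$ as $(\tau,\xi)$. For each fixed $\xi < \theta_{d-1}$, apply the inductive hypothesis to $\{p_{(\tau,\xi)} : \tau \in \prod_{i<d-1}\theta_i\}$ to obtain sets $H_i^\xi \subseteq \theta_i$ $(i < d-1)$, each of cardinality $\theta_i$, such that
\[
q_\xi := \bigcup\bigl\{ p_{(\tau,\xi)} : \tau \in \textstyle\prod_{i<d-1} H_i^\xi \bigr\}
\]
is a function. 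Since $2^{\theta_i} < \theta_{i+1}$ the cardinals $\theta_i$ are strictly increasing, so a finite product of them is the largest factor; hence $|\dom(q_\xi)| \le \bigl(\prod_{i<d-1}\theta_i\bigr)\cdot\aleph_0 = \theta_{d-2}$, and $2^{\theta_{d-2}} < \theta_{d-1}$ by hypothesis.

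Next I would homogenize the choices: there are at most $\prod_{i<d-1} 2^{\theta_i} = 2^{\theta_{d-2}} < \theta_{d-1}$ possible values for the tuple $\Seq{H_i^\xi : i < d-1}$, so by the regularity of $\theta_{d-1}$ there is $H' \subseteq \theta_{d-1}$ of cardinality $\theta_{d-1}$ on which this tuple is a single $\Seq{H_i : i < d-1}$. Now $\{q_\xi : \xi \in H'\}$ is a family of $\theta_{d-1}$ partial functions into a fixed countable set, whose domains have size at most $\theta_{d-2}$, with $2^{\theta_{d-2}} < \theta_{d-1}$ and $\theta_{d-1}$ the successor of a regular cardinal. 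Lemma \ref{base_case} applies to it: its statement concerns $2$‑valued functions, but one may replace each $q_\xi$ by the $2$‑valued function on $X \times \omega$ recording in which coordinate (under a fixed enumeration of the countable codomain) the value of $q_\xi$ lies --- this only multiplies the domain by $\aleph_0$, which changes nothing since $2^{\theta_{d-2}} \ge 2^{\aleph_0}$. This yields $H_{d-1} \subseteq H'$ of cardinality $\theta_{d-1}$ with $\bigcup_{\xi \in H_{d-1}} q_\xi$ a function. Since $\prod_{i<d-1}H_i = \prod_{i<d-1}H_i^\xi$ for each $\xi \in H_{d-1}$, this union is exactly
\[
\bigcup\bigl\{ p_\sigma : \sigma \in \textstyle\prod_{i<d}H_i \bigr\} = \bigcup_{\xi \in H_{d-1}} q_\xi ,
\]
which is therefore a function, and $|H_i| = \theta_i$ for all $i < d$; this completes the induction.

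The argument is really just cardinal bookkeeping, and the one point that deserves care is that the inductive hypothesis returns the \emph{infinite} functions $q_\xi$ rather than finite ones. What saves the day is that their domains still have size at most $\theta_{d-2}$, and the single inequality $2^{\theta_{d-2}} < \theta_{d-1}$ does double duty --- it collapses the $\theta_{d-1}$ many possible tuples $\Seq{H_i^\xi : i<d-1}$ to one via the regularity of $\theta_{d-1}$, and it lets Lemma \ref{base_case} be reapplied at the coordinate $\theta_{d-1}$. This is precisely why Lemma \ref{base_case} is formulated in terms of $2^{|\dom p_\xi|}$ being small rather than $\dom p_\xi$ being finite.
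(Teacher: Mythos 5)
Your proof is correct and follows essentially the same route as the paper: induct on $d$, apply the inductive hypothesis for each value of the top coordinate to get functions $q_\xi$, homogenize the tuples $\Seq{H_i^\xi : i<d-1}$ by a pigeonhole/counting argument using $2^{\theta_{d-2}}<\theta_{d-1}$ and regularity, and then apply Lemma \ref{base_case} to $\Seq{q_\xi}$. The only deviations --- handling $d=1$ by a direct $\Delta$-system argument rather than by citing Lemma \ref{base_case}, and spelling out the coding of a countable codomain by $2$-valued functions on $X\times\omega$ --- are harmless refinements of details the paper leaves implicit.
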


\begin{proof}
The proof is by induction on $d$.
The case $d=1$ follows from Lemma \ref{base_case}.
Now suppose $\Seq{\theta_i : i \leq d}$ and
$\Seq{p_\sigma : \sigma \in \prod_{i \leq d} \theta_i}$ are given.
For each $\xi$ in $\theta_{d}$, find $\Seq{H_i^\xi : i < d}$ such
that $H^\xi_i \subseteq \theta_i$ and
such that
\[
\bigcup \{p_{\sigma \cat \xi} : \sigma \in \prod_{i < d} H_i\}
\]
is a function, which we will denote by $q_\xi$.
Applying the pigeonhole principle, find
a cofinal $\Gamma \subseteq \theta_{d}$ such that,
for some $\Seq{H_i : i < d}$,
$H_i^\xi = H_i$ if $\xi$ is in $\Gamma$.
Now apply Lemma \ref{base_case} to $\Seq{q_\xi : \xi \in \Gamma}$
to find $H_d \subseteq \Gamma$ of cardinality $\theta_d$ such
that
\[
\bigcup_{\xi \in H_d} q_\xi = \bigcup \{p_\sigma : \sigma \in \prod_{i\leq d} H_i\}
\]
is a function.
\end{proof}

Finally we turn to the task of proving the dense set form of the Halpern-L\"auchli Theorem.

\begin{proof}[Proof of Theorem \ref{HLD}]
Suppose that 
\[
f: \bigcup_{k = 0}^\infty \prod_{i < d} 2^k \to 2
\]
is given and define $\Seq{\theta_i : i < d}$ by $\theta_0 = \aleph_1$
and $\theta_{i+1} = (2^{\theta_i})^{++}$.
Set $\Qcal$ to be the collection of all finite partial functions from $\theta_{d-1}$
into $2^{<\omega}$.
(This is an inessential modification of the forcing $\Cohen_{\theta_{d-1}}$.)
The order on $\Qcal$ is defined by $q \leq p$ if the domain of $q$
contains the domain of $p$ and
$p(\alpha)$ is an initial part $q(\alpha)$ whenever $\alpha$ is in the domain of $p$.
Observe that $\dot r_\xi := \bigcup_{q \in \dot G} q(\check \xi)$ describes an element of $2^{\omega}$.

Applying Property \ref{ZFC_forced} of the forcing relation,
fix a $\Qcal$-name $\dot \Ucal$ for a nonprincipal ultrafilter on $\omega$.
Since $\dot \Ucal$ is forced to be an ultrafilter,
for each $\sigma \in \prod_{i<d} \theta_i$ there is a $\dot e_\sigma$ such that
it is forced that there such that 
\[
\dot U_\sigma =
\{
m \in \omega :
f(\dot r_{\sigma(0)} \restriction m ,\ldots,\dot r_{\sigma(d-1)} \restriction m) =
\dot e_\sigma
\}
\]
is in $\dot \Ucal$.
By Property \ref{decide} of the forcing relation,
there is a $p_\sigma$ which decides $\dot e_\sigma$ to be some $e_\sigma \in \{0,1\}$.
By extending $p_\sigma$ if necessary, we may assume that there is an $l_\sigma$
such that if $\alpha$ is in
the domain of $p_\sigma$, $p_\sigma(\alpha)$ has length $l_\sigma$.
Define
\[
g(\sigma) := \big(e_\sigma,l_\sigma,p_\sigma(\sigma(0)),\ldots,p_\sigma(\sigma(d-1))\big).
\]
By Lemmas \ref{ER_polar} and \ref{ER_Delta}, there are cofinal sets $H_i \subseteq \theta_i$
such that:
\begin{enumerate}
\popcounter

\item
$g$ is constantly $(e,l,t_0,\ldots,t_{d-1})$ on $\prod_{i < d} H_i$ for some $(e,l)$ and
$t_0,\ldots,t_{d-1}$ in $2^l$;

\item
every finite subset of $\{p_\sigma : \sigma \in \prod_{i < d} H_i\}$
has a common lower bound in $\Qcal$.

\pushcounter
\end{enumerate}
Now let $m \geq l$ be given.
For each $i < d$, let $A_i$ be a subset of $H_i$ of cardinality $2^{m-l}$
and fix a bijection between $A_i$ and the set of binary sequences of length $m-l$.
Let $q$ be a condition in $\Qcal$ which is a common lower bound for
\[
\{p_\sigma : \sigma \in \prod_{i < d} A_i\}
\] 
and such that if $\alpha$ is the element of $A_i$ which corresponds
to $u \in 2^{m-l}$ under the bijection,
then $q(\alpha)$ has $t_i \cat u$ as an initial part.
That is, $q$ forces that $\dot r_\alpha \restriction m = t_i \cat u$.

Let $\bar q$ be an extension of $q$ such that for some $n > m$,
$\bar q$ forces that
\[
\check n \in \bigcap \{\dot U_\sigma : \sigma \in \prod_{i < d} A_i\}.
\]
By extending $\bar q$ if necessary, we may assume that for each $i < d$ and
$\alpha$ in $A_i$,
$\bar q(\alpha)$ has length at least $n$.
Finally, set $D_i$ to be the set of all $w \in 2^n$ such that for some $\alpha$ in $A_i$,
$\bar q(\alpha) \restriction n = w$.

We will now show that $D_i$ is $(m,n)$ dense above $t_i$ for each $i < d$ and that
$f \restriction \prod_{i < d} D_i$ is constantly $e$.
To see the former, fix $i < d$ and let $u$ be in $2^{m-l}$ and let $\alpha$ be the corresponding element of
$A_i$.
By our choice of $q$, $q(\alpha) \restriction m = t_i \cat u$
and by our choice of $\bar q$ and
the definition of $D_i$, there is a $w$ in $D_i$ such that
$\bar q(\alpha) \restriction n = w$.
To see the latter, 
suppose $\Seq{w_i : i < d} \in \prod_{i < d} D_i$ and
let $\sigma \in \prod_{i < d} A_i$ be such that
$\bar q(\alpha_i) \restriction n = w_i$.
Clearly $\bar q$ forces that
\[
\Seq{\dot r_{\sigma(i)} \restriction n : i < d} = \Seq{w_i : i < d}.
\]
Furthermore, by the definition of $U_\sigma$ and $n$,
we have that
\[
f(\Seq{\dot r_{\sigma(i)} \restriction n : i < d}) =
f(\Seq{w_i : i < d}) = e.
\]
\end{proof}

\section{Universally Baire sets and absoluteness}

\label{abs:sec}

In this section, we will introduce an abstract notion of regularity for subsets
of complete metric spaces which is useful in proving absoluteness results.
Let $(X,d)$ be a (not necessarily separable) complete metric space.
Recall that the completion of a metric space is taken to be the collection of all
equivalence classes of its Cauchy sequences.
Recall also that if $\Qcal$ is a forcing, then $\dot X$ represents a $\Qcal$-name for
the completion of $\check X$.

In this section we will be interested in interpreting names by filters which 
are not fully generic.
Notice, for instance, that it is possible that $\dot y$ is forced to be equal to $\check x$, even
though there are some (non generic) ultrafilters which interpret $\dot y$ to be different than $x$.
For this reason it is necessary to work with names which have better properties
with respect to arbitrary interpretations.

\begin{defn}
If $\Qcal$ is a forcing, then a \emph{nice $\Qcal$-name for an element of $\dot X$}
is a $\Qcal$-name $\dot x$ such that, for some countable collection of dense subsets $\Dcal$
of $\Qcal$, $\Int{\dot x}{G}$ is a Cauchy sequence in $(X,d)$ whenever
$G$ is $\Dcal$-generic.
\end{defn}

\begin{remark}
For technical reasons we need to make nice $\Qcal$-names for elements of a complete metric space
$\dot X$ to formally be a Cauchy sequence rather than an equivalence class
of a Cauchy sequence, even though
the intent is only to refer to the limit point corresponding to the equivalence class. 
Also, while the completion of a complete metric space is not literally equal to the original
space, there is a canonical isometry between the two and usually there is no need to
distinguish them.
The point in the above definition is that the only meaningful way to define
$\dot X$ is as the name for the completion of $\check X$.
Hence names for elements of $\dot X$ are names for equivalence classes of Cauchy sequences.
When they are interpreted by a sufficiently
generic filter, they will typically result in elements of the completion of $X$, not
in elements of $X$.
\end{remark}

The next lemma shows nice $\Qcal$-names can be used to represent any element of $\dot X$ whenever
$\Qcal$ is a forcing and $X$ is a complete metric space.

\begin{lem}
If $\Seq{\dot x_n : n < \infty}$ is a $\Qcal$-name for a Cauchy sequence in $\check X$, then
there exists a nice $\Qcal$-name $\Seq{\dot y_n : n < \infty}$ for an element of $\dot X$ such that
it is forced that $\Seq{\dot y_n : n < \infty} = \Seq{\dot x_n : n < \infty}$.
\end{lem}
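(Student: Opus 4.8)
The plan is to extract from the given name $\Seq{\dot x_n : n < \infty}$ a new name whose interpretation under \emph{any} sufficiently generic filter is guaranteed to be a Cauchy sequence, by ``reading off'' a rapidly converging subsequence using the forcing relation. First I would work in a complete Boolean algebra: by Theorem~\ref{cBA} we may assume $\Qcal$ is (dense in) the positive part of a complete Boolean algebra $\Bcal$, so that truth values $\truth{\cdot}$ are available. Since it is forced that $\Seq{\dot x_n : n < \infty}$ is Cauchy, for each $k$ the sentence ``$\exists N\, \forall m, m' \geq N\ d(\dot x_m, \dot x_{m'}) < 2^{-k}$'' is forced by $\one$. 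Using Property~\ref{completeness_of_names} (in the $\Bcal$-valued form, $\truth{\exists v\,\phi} = \bigjoin_{\dot z}\truth{\phi[\dot z/v]}$, together with Property~\ref{ordinal_names} to see the witness may be taken to be a genuine natural number name), I would fix for each $k$ a $\Qcal$-name $\dot N_k$ forced to be such an $N$; by replacing $\dot N_k$ with $\max(\dot N_0,\ldots,\dot N_k)$ we may assume it is forced that $\dot N_0 \le \dot N_1 \le \cdots$. The key point is that for each fixed $k$, the set $\Dcal_k := \{q \in \Qcal : q \text{ decides } \dot N_k\}$ is dense (by Property~\ref{decide} or directly from Property~\ref{ordinal_names} followed by deciding an ordinal below a fixed bound), and I take $\Dcal := \{\Dcal_k : k < \infty\}$, a countable family of dense sets.

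Next I define the nice name. The idea is: $\dot y_k$ should be $\dot x_{\dot N_k}$, i.e. a name for the $\dot N_k$-th term of the original sequence. Formally, using Property~\ref{completeness_of_names} and Property~\ref{ZFC_forced} one builds a $\Qcal$-name $\dot y_k$ with $\one \forces \dot y_k = \dot x_{\dot N_k}$; concretely $\dot y_k := \{(\dot z, q) : q \le q' \text{ for some } q' \text{ deciding } \dot N_k \text{ to be some } m, \text{ and } q \forces \dot z = \dot x_m\}$, adjusted as needed to be a legitimate $\Qcal$-name. Then I claim $\Seq{\dot y_k : k < \infty}$ is a nice name witnessed by $\Dcal$: if $G$ is $\Dcal$-generic, then for each $k$ there is $q_k \in G$ with $q_k \in \Dcal_k$, so $q_k$ decides $\dot N_k$ to be some concrete $m_k \in \omega$, and hence $\Int{\dot y_k}{G} = \Int{\dot x_{m_k}}{G}$ is a genuine element of $X$ (an actual $\Qcal$-name term, interpreted). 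Moreover, because it is \emph{forced} that $d(\dot x_m, \dot x_{m'}) < 2^{-k}$ for all $m, m' \ge \dot N_k$, and because this is an assertion with bounded quantifiers about ground-model-decided data once $\dot N_k, \dot N_{k'}$ are decided, one gets that $G$ forces — hence, by genericity and the fact that the relevant conditions lie in $G$ — that $d(\Int{\dot y_k}{G}, \Int{\dot y_{k'}}{G}) \le 2^{-\min(k,k')}$. Thus $\Int{\Seq{\dot y_k : k}}{G}$ is Cauchy in $(X,d)$, so $\Seq{\dot y_k : k}$ is nice.

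Finally I must check that it is forced that $\Seq{\dot y_k : k < \infty}$ and $\Seq{\dot x_n : n < \infty}$ represent the same element of $\dot X$, i.e. determine the same equivalence class of Cauchy sequences. This is immediate from $\one \forces \dot y_k = \dot x_{\dot N_k}$ together with the forced statement that $\Seq{\dot N_k : k}$ is nondecreasing and cofinal in $\omega$ (which can be arranged by further thinning, replacing $\dot N_k$ by $\max(\dot N_k, k)$): a sequence and a subsequence of a Cauchy sequence always converge to the same point, and this fact has bounded-quantifier content once one works inside the forcing language, so it is forced by Property~\ref{ZFC_forced}. I expect the main obstacle to be the bookkeeping in the second paragraph: one has to be careful that ``$\dot y_k = \dot x_{\dot N_k}$'' is legitimately a $\Qcal$-name construction (invoking Properties~\ref{completeness_of_names} and~\ref{ZFC_forced} to replace a procedure by an actual name, per the discussion at the end of Section~\ref{semantics:sec}), and that the estimate $d(\Int{\dot y_k}{G},\Int{\dot y_{k'}}{G}) \le 2^{-\min(k,k')}$ genuinely follows for \emph{every} $\Dcal$-generic $G$ and not merely every $M$-generic one — this is exactly why the countable family $\Dcal$ of dense sets, rather than full genericity, is the right hypothesis to isolate.
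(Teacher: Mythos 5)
Your plan---fix names $\dot N_k$ for a Cauchy modulus, let $\Dcal$ consist of the dense sets deciding the $\dot N_k$, and take $\dot y_k$ to be ``$\dot x_{\dot N_k}$''---founders at exactly the point flagged in the opening paragraph of Section \ref{abs:sec}: forced equality of names is not preserved under interpretation by filters that are only $\Dcal$-generic. Your $\dot y_k$ is built from pairs $(\dot z,q)$ where $\dot z$ is an arbitrary name forced by $q$ to equal $\dot x_m$, and your $\Dcal$ decides only the \emph{indices} $\dot N_k$, never the \emph{values} of the terms. For a merely $\Dcal$-generic $G$, the assertion that $\Int{\dot y_k}{G}=\Int{\dot x_{m_k}}{G}$ ``is a genuine element of $X$'' fails on both counts: names forced to be equal can have different interpretations under a non-generic filter, and $\Int{\dot x_{m_k}}{G}$ itself need not be a point of $X$ at all, since $\dot x_{m_k}$ is just some name forced to lie in $\check X$ and nothing in your $\Dcal$ decides it. Consequently the estimate $d(\Int{\dot y_k}{G},\Int{\dot y_{k'}}{G})\le 2^{-\min(k,k')}$ does not even typecheck, and niceness is not established. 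The missing idea---which is the whole content of the paper's short proof---is to build the new name out of \emph{check names for decided values}: the paper sets $\dot y_n:=\{(\check y,q)\in X\times\Qcal : q\forces \dot x_n=\check y\}$ and puts into $\Dcal$ the dense sets $D_n$ of conditions deciding both $\dot x_n$ (possible by Property \ref{decide}, since $\dot x_n$ is forced to lie in $\check X$) and the least $\dot m$ with $d(\dot x_i,\dot x_j)<1/n$ for $i,j>\dot m$; then every $\Dcal$-generic filter interprets the name as an honest Cauchy sequence of points of $X$. Your argument can be repaired by additionally deciding $\dot x_{\dot N_k}$ to be some $\check y$ and using that check name, but at that point it collapses into the paper's construction, with the modulus names $\dot N_k$ and the passage to a complete Boolean algebra carrying no extra weight.

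A secondary discrepancy: the lemma asks for a nice name forced to be \emph{equal} to $\Seq{\dot x_n : n<\infty}$, and the paper's termwise construction gives $\one\forces\dot y_n=\dot x_n$ for every $n$; your subsequence yields only that the two sequences are forced to be equivalent Cauchy sequences, i.e.\ to determine the same point of $\dot X$. That weaker conclusion would suffice for the way nice names are used later, but it is not what is stated, and the extra thinning needed to arrange it (cofinality of the $\dot N_k$, the bounded-quantifier argument) is bookkeeping you would not need once the values themselves are decided.
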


\begin{proof}
Define
\[
\dot y_n := \{(\check y,q) \in X \times \Qcal : q \forces \dot x_n = \check y\} 
\]
and let $D_n$ be the elements of $\Qcal$ which decide both $\dot x_n$ and the least $\dot m$
such that for all $i,j  > \dot m$, $d(\dot x_i,\dot x_j) < 1/n$.
It is readily verified that $\Dcal := \{D_n : n < \infty\}$ witnesses that
$\Seq{\dot y_n : n < \infty}$
is a nice $\Qcal$-name for an element of $\dot X$.
\end{proof}

\begin{defn} (see \cite{UB})
Let $(X,d)$ be a complete metric space.
A subset $A$ of $X$ is \emph{universally Baire}
if whenever $\Qcal$ is a forcing there is a $\Qcal$-name $\dot A$ such that for every
nice $\Qcal$-name $\dot x$ for an element of $\dot X$, there is a countable collection of
dense subsets $\Dcal$ of $\Qcal$ such that:
\begin{enumerate}[\indent a.]

\item $\{q \in \Qcal : q \textrm{ decides } \dot x \in \dot A\}$ is in $\Dcal$;

\item whenever $G$ is a $\Dcal$-generic filter in $\Qcal$,
$\Int{\dot x}{G}$ is in (the completion of) $X$
and $\Int{\dot x}{G}$ is in $A$ if and only
if there is a $q$ in $G$ such that $q \forces \dot x \in \dot A$.

\end{enumerate}
\end{defn}

The following proposition, while easy to establish, is
important in what follows.
\begin{prop}
If $\dot A$ and $\dot B$ are $\Qcal$-names which both witness that
$A$ is universally Baire with respect to $\Qcal$,
then $\one \forces \dot A = \dot B$.
\end{prop}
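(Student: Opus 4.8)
The plan is to show $\one \forces \dot A = \dot B$ by contradiction: suppose some condition $p$ forces $\dot A \ne \dot B$. By the Axiom of Extensionality (forced by every condition, via Property \ref{ZFC_forced}) and Property \ref{completeness_of_names}, there is a $\Qcal$-name $\dot z$ and a condition $q \le p$ such that $q$ forces ``$\dot z$ is a Cauchy sequence of elements of $\check X$ and $\dot z$ witnesses $\dot A \ne \dot B$'' — i.e. $q$ forces that $\dot z$ lies in exactly one of $\dot A$, $\dot B$. Using the preceding lemma (replacing $\dot z$ by an equivalent nice name) we may assume $\dot z$ is a nice $\Qcal$-name for an element of $\dot X$.

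Next I would invoke the definition of universally Baire twice, once for $\dot A$ and once for $\dot B$, applied to the nice name $\dot z$. This yields two countable families $\Dcal_A$ and $\Dcal_B$ of dense subsets of $\Qcal$ satisfying conditions (a) and (b) of the definition. Let $\Dcal := \Dcal_A \cup \Dcal_B$, still a countable collection of dense sets. The key point is now to produce a single filter that is $\Dcal$-generic and contains $q$: since $\Dcal$ is countable, one builds such a filter by a straightforward recursion, at each step meeting the next dense set below the current condition, starting from $q$. (This is the standard ``Rasiowa–Sikorski''-style construction; it does not require $\Qcal$ itself to be countable, only that we are meeting countably many dense sets.) Call the resulting filter $G$.

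Now by part (b) of the definition applied to $\dot A$, $\Int{\dot z}{G}$ lies in $A$ iff some condition in $G$ forces $\dot z \in \dot A$; and applied to $\dot B$, $\Int{\dot z}{G}$ lies in $A$ iff some condition in $G$ forces $\dot z \in \dot B$. Since $q \in G$ and $q$ forces that $\dot z$ is in exactly one of $\dot A$, $\dot B$, the usual soundness of the forcing relation along a filter meeting enough dense sets — concretely, that if $q \in G$ and $q \forces \phi$ for a suitably decided $\phi$, then $\phi$ holds under the interpretation — gives a contradiction: membership of $\Int{\dot z}{G}$ in $A$ would have to both hold and fail. Hence no such $p$ exists and $\one \forces \dot A = \dot B$.

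The step I expect to be the main obstacle is the passage from ``$q$ forces $\dot z \in \dot A \leftrightarrow \dot z \notin \dot B$ in the forcing language'' to an actual asymmetry under interpretation by the $\Dcal$-generic $G$. This requires care because we are not working with a fully generic filter over a transitive model but only with a filter meeting countably many prescribed dense sets; the force of the definition of universally Baire is precisely that conditions (a) and (b) make the relations $\dot z \in \dot A$ and $\dot z \in \dot B$ behave correctly along such filters, so the argument must route the contradiction through (a) and (b) rather than through the full semantic theorem of Section \ref{semantics:sec}. Once one is disciplined about using only the stated properties of universally Baire names, the contradiction is immediate; the rest is routine.
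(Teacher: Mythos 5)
Your proposal is correct and follows essentially the same route as the paper: assume some condition forces a nice name $\dot z$ into exactly one of $\dot A$, $\dot B$, build a filter through that condition meeting the countably many dense sets supplied by both universally Baire witnesses, and apply clause (b) twice to make $\Int{\dot z}{G} \in A$ both hold and fail. The only cosmetic difference is that the paper first passes to a condition forcing, say, $\dot x \in \dot A \setminus \dot B$ (which makes the final contradiction immediate), whereas you leave the ``exactly one'' formula undecided and rely on clause (a) to finish; both versions work.
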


\begin{proof}
If this were not the case, then there would exist a nice
$\Qcal$-name $\dot x$ for an element of $\dot X$and
a $p$ in $\Qcal$ such that
\[
p \forces \dot x \in (\dot A \symdif \dot B).
\]
Suppose without loss of generality that $p \forces \dot x \in (\dot A \setminus \dot B)$.
If $G$ is a sufficiently generic filter containing $p$, then
$\Int{\dot x}{G}$ will be in $A$ since $p$ is in $G$ and $p$ forces that $\dot x$ is in $\dot A$.
On the other hand, $\Int{\dot x}{G}$ can't be in $A$ since $p$ is in $G$ and
$p$ forces that $\dot x$ is not in $\dot B$,
a contradiction.
\end{proof}

The following is also easy to establish.
The proof is left to the interested reader.

\begin{prop} \label{UB_sigma-alg}
The universally Baire subsets of a complete metric space form a $\sigma$-algebra
which includes the open subsets of $X$.
In particular, every Borel set in a complete metric space is universally Baire.
\end{prop}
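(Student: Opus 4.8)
The plan is to verify the $\sigma$-algebra axioms directly from the definition of universally Baire, using the uniqueness of the witnessing name (the previous proposition) to make the constructions canonical. For the open sets, given an open $U \subseteq X$ and a forcing $\Qcal$, I would take $\dot U$ to be a $\Qcal$-name for the open set in the completion of $\check X$ defined by the same system of rational balls that defines $U$; since a nice $\Qcal$-name $\dot x$ for an element of $\dot X$ is (forced to be) a Cauchy sequence, the assertion $\dot x \in \dot U$ reduces to the assertion that some rational ball in the defining system eventually contains the tail of $\dot x$, and the dense set of conditions deciding this (together with the dense sets witnessing niceness) can be assembled into the required countable family $\Dcal$. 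The point is that membership in an open set is decided by a ``local'' piece of the Cauchy sequence, so a condition deciding finitely much of $\dot x$ already decides $\dot x \in \dot U$ or forces its failure on a dense set.

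For closure under complements, given $A$ universally Baire with witnessing name $\dot A$ for a forcing $\Qcal$, I would take the name for the complement to be (a name for) the set-theoretic complement of $\dot A$ inside $\dot X$, i.e. a $\Qcal$-name $\dot B$ with $\one \forces \dot B = \dot X \setminus \dot A$. For a nice name $\dot x$, the same countable family $\Dcal$ that works for $A$ works here: a condition decides $\dot x \in \dot A$ if and only if it decides $\dot x \in \dot B$, and for a $\Dcal$-generic $G$, $\Int{\dot x}{G}$ lies in the completion of $X$ and lies in $X \setminus A$ exactly when no $q \in G$ forces $\dot x \in \dot A$, which by genericity (some $q \in G$ decides it) is equivalent to some $q \in G$ forcing $\dot x \in \dot B$. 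So complements are essentially free.

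The one step requiring genuine care is closure under countable unions (equivalently, intersections). Given $A = \bigcup_{n < \infty} A_n$ with each $A_n$ universally Baire, fix a forcing $\Qcal$ and the witnessing names $\dot A_n$; the natural candidate is a name $\dot A$ with $\one \forces \dot A = \bigcup_n \dot A_n$. Given a nice $\Qcal$-name $\dot x$, each $A_n$ supplies a countable family $\Dcal_n$; I would set $\Dcal := \bigcup_n \Dcal_n$ together with the dense set $E := \{q : q$ decides $\dot x \in \dot A_n$ for all $n\}$ --- but here is the subtlety: ``decides $\dot x \in \dot A_n$ for every $n$'' is not obviously forced to happen on a dense set of conditions, since it is a countable conjunction of decisions. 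The resolution is that we do not need a single condition to decide all of them; we need, for $\Dcal$-generic $G$, that $\Int{\dot x}{G} \in A$ iff some $q \in G$ forces $\dot x \in \dot A_n$ for some $n$. For a $\Dcal$-generic $G$, by genericity against each $\Dcal_n$ we get that $\Int{\dot x}{G} \in A_n$ iff some $q \in G$ forces $\dot x \in \dot A_n$; taking the union over $n$ gives $\Int{\dot x}{G} \in A$ iff some $q \in G$ forces $\dot x \in \dot A_n$ for some $n$, which is exactly $\dot x \in \dot A$ being forced by an element of $G$. Clause (a) is handled by including in $\Dcal$, for each $n$, the dense set $\{q : q$ decides $\dot x \in \dot A_n\}$ and observing that a condition forcing $\dot x \in \dot A_n$ decides $\dot x \in \dot A$ positively, while a condition below which every $\dot A_n$-membership is decided negatively decides $\dot x \in \dot A$ negatively; one checks the set of such conditions is dense using that $\Dcal$ is countable and $\Qcal$ can be taken separative. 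The main obstacle, then, is purely bookkeeping: arranging the countable collections of dense sets so that the biconditional in clause (b) survives the passage to a countable union, and I expect the cleanest route is to phrase everything in terms of what a $\Dcal$-generic filter sees rather than what individual conditions decide. I would leave the open-set case and these verifications to the reader as the proposition already promises, sketching only the union case in detail.
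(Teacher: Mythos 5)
The paper offers no proof to compare against (it is left to the reader), so I am judging your sketch on its own terms. The complement step is fine, but the countable-union step has a genuine gap at exactly the point you flagged and then talked yourself past. From ``some $q \in G$ forces $\dot x \in \dot A_n$ for some $n$'' you do get ``some $q \in G$ forces $\dot x \in \dot A$'', but the converse --- which is the direction clause (b) for $A$ actually needs --- is not automatic for a filter that is only generic with respect to $\bigcup_n \Dcal_n$ together with the deciding set for $\dot x \in \dot A$: a condition can force membership in the union without any member of $G$ ever forcing membership in a particular $\dot A_n$. Concretely, take $\Qcal = \Cohen$, $X = \R$, $A_n := (1/(n+2),1)$, so $A = (0,1)$, and let $\dot x$ be the usual nice name for $\sum_n \dot g(n)2^{-n-1}$, where $\dot g$ is the generic function. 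No condition forces $\dot x = 0$ or $\dot x = 1$, so by Property \ref{negation_monotone} every condition forces $\dot x \in \dot A$; yet the filter $G$ consisting of all conditions taking only the value $0$ interprets $\dot x$ as $0 \notin A$, while meeting the deciding set for each individual $\dot A_n$ (negatively, via a long enough all-zero condition forcing $\dot x \leq 2^{-k}$), the bit- and modulus-deciding sets, and the natural witnessing families $\Dcal_n$ for the intervals, as well as the (here trivial) deciding set for $\dot x \in \dot A$. So with the family $\Dcal$ you specify, clause (b) fails. Note also that your stated reason for the density claim (``$\Dcal$ countable, $\Qcal$ separative'') is not the relevant one, and that clause (a) never needed help: the set of conditions deciding $\dot x \in \dot A$ is dense for any name, by Property \ref{negation_monotone}.

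The repair is the dense set you in fact wrote down but deployed only for clause (a): $E^* := \{q : \exists n\ q \forces \dot x \in \dot A_n\} \cup \{q : q \forces \dot x \notin \dot A\}$. It is dense purely by Property \ref{negation_monotone} together with Proposition \ref{check_quant} (if no extension of $p$ forces $\dot x \in \dot A_n$ for any $n$, then $p$ forces $\dot x \notin \dot A_n$ for every $n$, hence $\dot x \notin \dot A$). Put $E^*$ into $\Dcal$: if $G$ meets it in the first part, clause (b) for that $A_n$ gives $\Int{\dot x}{G} \in A_n \subseteq A$ and the witness for $\dot A$ is the same condition; if in the second part, no member of $G$ can force $\dot x \in \dot A$, and the per-$n$ biconditionals give $\Int{\dot x}{G} \notin A_n$ for every $n$. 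That closes the union case (my counterexample filter is precisely the one $E^*$ excludes). I would also tighten the open-set step: a condition deciding finitely many entries of the Cauchy sequence never by itself decides membership in $\dot U$; what you need in $\Dcal$, besides the automatically dense deciding set, are dense sets of conditions deciding an entry of $\dot x$ together with a Cauchy modulus, and the dense set of conditions either forcing $\dot x$ into a specific ball of the presentation of $U$ with a definite rational margin or forcing $\dot x \notin \dot U$; a common lower bound in $G$ of the relevant finitely many conditions then transfers the forced estimates to the actual interpreted limit (via Proposition \ref{bounded_abs}), giving both directions of clause (b).
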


Putting this all together, we have the following proposition which will be
used in establishing absoluteness results.
If $\phi(v_1,\ldots,v_n)$ is a logical formula and $x_1,\ldots,x_n$ are sets,
then we say that $\phi(x_1,\ldots,x_n)$ is \emph{generically absolute}
if whenever $\Qcal$ is a forcing and $q$ is in $\Qcal$,
$q \forces \phi(\check x_1,\ldots,\check x_n)$ if and only if
$\phi(x_1,\ldots,x_n)$ is true.

\begin{prop} \label{gen_abs_prop}
The assertion that a given countable Boolean combination of universally
Baire subsets of a complete metric space
is nonempty is generically absolute.
\end{prop}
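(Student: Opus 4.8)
The plan is to reduce the statement to the defining property of universally Baire sets together with the fact that, \emph{inside the ambient universe}, one can always find a filter meeting any prescribed countable family of dense subsets of $\Qcal$ (a Rasiowa--Sikorski style diagonalization, already implicit in the way ``universally Baire'' was phrased). Let $X$ be the complete metric space in question, let $A_0, A_1, \dots$ be the given universally Baire subsets, and let $B$ be the fixed countable Boolean combination of the $A_n$. By Proposition~\ref{UB_sigma-alg}, $B$ is itself universally Baire, and by the uniqueness proposition immediately preceding it, for each forcing $\Qcal$ there is a $\Qcal$-name $\dot B$, unique up to forced equality, witnessing this (equivalently, the same Boolean combination applied to the canonical names $\dot A_n$). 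This $\dot B$ is what the Boolean combination denotes in a generic extension, so it suffices to show that for every forcing $\Qcal$ and every $q \in \Qcal$,
\[
q \forces \dot B \neq \emptyset \quad\Longleftrightarrow\quad B \neq \emptyset .
\]

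First I would treat the direction from $V$ to the extension. Assume $x \in B$. Using the lemma on nice names, fix a nice $\Qcal$-name $\dot x$ for the element of $\dot X$ represented by the constant Cauchy sequence $\Seq{x : n < \infty}$, so that $\Int{\dot x}{G} = x$ for every filter $G$. The definition of universally Baire (applied to $\dot B$ and $\dot x$) yields a countable family $\Dcal$ of dense subsets of $\Qcal$ such that whenever $G$ is $\Dcal$-generic, $\Int{\dot x}{G}$ lies in (the completion of) $X$ and belongs to $B$ precisely when some $p \in G$ forces $\dot x \in \dot B$. I claim $\one \forces \dot x \in \dot B$: if not, Property~\ref{negation_monotone} gives a $q_0$ with $q_0 \forces \neg(\dot x \in \dot B)$; choosing a $\Dcal$-generic $G$ containing $q_0$ and using $\Int{\dot x}{G} = x \in B$, we obtain some $p \in G$ with $p \forces \dot x \in \dot B$, and any common extension of $p$ and $q_0$ in $G$ then forces $\dot x \in \dot B$ while lying below $q_0$, contradicting Property~\ref{negation_monotone}. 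Hence $\one \forces \dot x \in \dot B$, so $\one \forces \dot B \neq \emptyset$, and in particular $q \forces \dot B \neq \emptyset$ for every $q$.

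For the converse, assume $q \forces \dot B \neq \emptyset$. Since $\dot B$ is forced to name a subset of $\dot X$, Property~\ref{completeness_of_names} together with the lemma on nice names lets me fix $q' \leq q$ and a nice $\Qcal$-name $\dot x$ for an element of $\dot X$ with $q' \forces \dot x \in \dot B$. Applying the definition of universally Baire to $\dot B$ and $\dot x$ again gives a countable family $\Dcal$ of dense subsets of $\Qcal$ with the property above, and I choose (by the same diagonalization) a $\Dcal$-generic filter $G$ containing $q'$. Then $q' \in G$ forces $\dot x \in \dot B$, so $\Int{\dot x}{G} \in B$; thus $B \neq \emptyset$. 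Combining the two directions (with the sequence $\Seq{A_n : n < \infty}$ treated as the parameter) gives the asserted generic absoluteness.

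I expect the main difficulty to be bookkeeping rather than a genuine obstacle. One must keep careful track that the name attached to the Boolean combination is the \emph{canonical} universally Baire name, so that Proposition~\ref{UB_sigma-alg} and the uniqueness proposition legitimately apply; that only nice names are used throughout, so that interpretations by filters which are merely $\Dcal$-generic (rather than fully generic over a model) behave as intended; and that $X$ and its completion are identified consistently, as in the running convention of Section~\ref{semantics:sec}. The essential mechanism --- passing between truth in $V$ and the forcing relation by way of filters generic over countably many dense sets, which exist in $V$ --- is precisely what the notion of universally Baire was designed to supply.
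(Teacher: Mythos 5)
Your proof is correct and is essentially the argument the paper leaves implicit (``putting this all together''): transfer between truth and the forcing relation via nice names, the canonical universally Baire witness for the combination, and filters generic with respect to the countably many dense sets, which exist by a Rasiowa--Sikorski diagonalization inside the ambient universe. The two places you compress --- that a statement not forced by $\one$ has some condition forcing its negation (Property~\ref{negation_monotone} applied to the negated formula together with Property~\ref{ZFC_forced}), and that the Boolean combination of the canonical names witnesses universal Baireness of $B$ (the content of the omitted proof of Proposition~\ref{UB_sigma-alg}) --- are exactly the routine bookkeeping you yourself identify, and neither is a genuine gap.
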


\section{A property of marker sequences for Bernoulli shifts*}

In this section we will give an example of how homogeneity properties of a forcing
can be put to use.
The goal of the section is to prove a special case of a theorem of Gao, Jackson,
and Seward concerning marker sequences in Bernoulli shift actions. 
Let $\Gamma$ be a countable discrete group acting continuously on a Polish space $X$.
A decreasing sequence of Borel subsets
$\Seq{A_n : n < \infty}$ of
$X$ is a \emph{vanishing marker sequence} for the action if each $A_n$ intersects
every orbit of the action and $\bigcap_{n=0}^\infty A_n = \emptyset$.
Certainly a necessary requirement for such a sequence to exist is that every orbit of
$\Gamma$ is infinite.
In fact this is also a sufficient condition; this is the content of the so-called
\emph{Marker Lemma} (see, e.g., \cite{KechrisMiller}).
The following result of Gao, Jackson, and Seward
grew out of their analysis of the Borel chromatic number
of the free part of the shift graph on $2^{\Z^2}$.

\begin{thm} \cite{grp_color_bernoulli}
Suppose that $\Gamma$ is a countable group,
$k$ is a natural number, and $\Seq{A_n : n < \infty}$ is a vanishing marker sequence
for the free part of the shift action of $\Gamma$ on $k^\Gamma$.
For every increasing sequence $\Seq{F_n : n < \infty}$ of finite sets which covers
$\Gamma$, there is an $x \in k^\Gamma$ such that:
\begin{enumerate}[\indent a.]

\item the closure of the orbit of $x$ is contained in the free part of the action;

\item the closure of the orbit of $x$ is a minimal nonempty closed subset of
$k^\Gamma$ which is invariant under the action;

\item there are infinitely many $n$ such that, for some $g$ in $F_n$,
$g \cdot x$ is in $A_n$.

\end{enumerate}
\end{thm}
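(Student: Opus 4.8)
The plan is to reduce the statement to a forcing construction via the generic absoluteness machinery of the previous section. Let $C \subseteq k^\Gamma$ be the set of $x$ satisfying (a), (b) and (c). I claim $C$ is Borel: condition (a) says $\overline{\Gamma\cdot x}$ misses each of the closed sets $\{y : g\cdot y = y\}$ with $g \neq 1$, which by compactness is an $F_{\sigma\delta}$ condition on $x$; condition (b) says $x$ is uniformly recurrent (every finite pattern of $x$ occurs syndetically in $x$), again $F_{\sigma\delta}$; and condition (c), being $\exists^\infty n\, \exists g \in F_n\, (g\cdot x \in A_n)$ with each $A_n$ Borel, is visibly Borel. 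Hence $C$ is universally Baire by Proposition \ref{UB_sigma-alg}, so by Proposition \ref{gen_abs_prop} the assertion ``$C \neq \emptyset$'' is generically absolute. It therefore suffices to produce a forcing $\Qcal$ with $\one \forces \dot C \neq \emptyset$.

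Next I would set up the forcing. Since a vanishing marker sequence exists, every orbit of the shift action is infinite, so $\Gamma$ is infinite and $k \geq 2$; fix a minimal free subshift $Y \subseteq k^\Gamma$ (such exists by \cite{grp_color_bernoulli}). Let $\Qcal$ be the forcing whose conditions are the nonempty relatively clopen subsets of $Y$ --- equivalently the finite patterns occurring in $Y$ --- ordered by inclusion. This $\Qcal$ is countable, atomless and separative, hence equivalent to Cohen forcing and in particular homogeneous; moreover the shift action of $\Gamma$ induces partial automorphisms of $\Qcal$ by translation of patterns. Genericity forces $\bigcap\{p : p \in \dot G\}$ to be a single point $\dot x \in Y$, and since $Y$ is minimal we have $\one \forces \overline{\Gamma\cdot\dot x} = \check Y$; as $Y$ is a minimal free subshift, $\one$ forces (a) and (b). It remains to force (c).

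For (c) I would combine a pigeonhole observation about the markers with a homogeneity argument. The observation: for every $y \in Y$ and every $n$ the set $S_n(y) := \{g \in \Gamma : g\cdot y \in A_n\}$ is infinite. Indeed the $A_n$ decrease, so the $S_n(y)$ decrease; each $S_n(y)$ is nonempty because $A_n$ meets the orbit of $y$; and $\bigcap_n S_n(y) = \{g : g\cdot y \in \bigcap_n A_n\} = \emptyset$ --- which is impossible for a decreasing chain of nonempty \emph{finite} sets, so in fact no $S_n(y)$ is finite. By Proposition \ref{negation_monotone}, to see that $\one \forces$ (c) it is enough to show that for every condition $V$ and every $N$ there are $n \geq N$, a $g \in F_n$ and a $W \leq V$ with $W \forces g\cdot\dot x \in \dot A_n$; since $\dot A_n$ is the canonical name of the Borel (hence universally Baire) set $A_n$, this reduces to finding $n \geq N$ and $g \in F_n$ with $g^{-1}A_n$ non-meager in $V$. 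Here homogeneity enters: the translation partial-automorphisms of $\Qcal$, together with the minimality of $Y$ (which lets one move any nonempty clopen subset of $Y$ into any other by some element of $\Gamma$), let one reduce to the case $V = Y$; and since $\bigcup_{g \in \Gamma} g^{-1}A_n = Y$ for every $n$, Baire category supplies, for each $n$, some translate $g^{-1}A_n$ that is non-meager in $Y$.

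The main obstacle is the final coordination: the translate $g$ produced by Baire category need not lie in $F_n$, and one must play the growth of the $F_n$ (they exhaust $\Gamma$) against the shrinkage of the $A_n$ (whose intersection is empty) to guarantee that, for infinitely many $n$, a \emph{usable} translate already sits inside $F_n$. This is precisely the point at which both hypotheses on the marker sequence --- that $\bigcap_n A_n = \emptyset$ and that the $F_n$ cover $\Gamma$ --- and the homogeneity of $\Qcal$ must be used together; the remaining ingredients (the reduction via generic absoluteness, the choice of $\Qcal$, and the verification of (a) and (b)) are routine.
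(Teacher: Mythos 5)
There is a genuine gap, and it sits exactly where you placed your ``main obstacle'': the coordination of the translate with the window $F_n$ is not a detail to be arranged later --- it is the entire content of clause (c), and your choice of forcing gives you no handle on it. In your setup (Cohen forcing on the clopen subsets of a fixed ground-model minimal free subshift $Y$), the condition you obtain at stage $n$ forcing $g\cdot\dot x\in\dot A_n$ is some clopen set $r$ determined by an arbitrarily long pattern of $Y$, and the only tool you have for moving the witness $g$ into a bounded window is minimality of $Y$, whose syndeticity constant depends on $r$ --- hence on $n$. Nothing ties that constant to $F_n$, and since the $A_n$ shrink, the relevant patterns (and their recurrence gaps) can outpace the growth of the $F_n$; minimality alone and ``homogeneity'' of a countable atomless poset do not supply the bounded correction you need. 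Compare this with the paper's proof (given for $\Gamma=\Z$, $k=2$): the forcing there consists of finite binary patterns on intervals, ordered so that every extension of $q:=p\cup(\bar p+l)$ is a concatenation of translates of $p$ and $\bar p$. Consequently any $r\leq q$ and any shift $i$ can be corrected by some $j$ with $0\leq j<2l$ so that $r-i+j$ is compatible with $p$, where $l=|I_p|$ depends only on the condition $p$ that forced the negation of (c) --- \emph{not} on $n$. One then simply chooses $n\geq m$ with $f(n)>2l$, uses that $\dot A_n$ is forced (by generic absoluteness) to meet every orbit to get some $r\leq q$ and $i$ with $r\forces \dot x+i\in\dot A_n$, shifts by the translation-invariance of the poset to get $r-i+j\forces \dot x+j\in\dot A_n$, and contradicts $p$. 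In other words, the paper does not force a generic point of a pre-existing minimal subshift; it builds the orbit closure generically out of $p$ and $\bar p$ precisely so that minimality (Lemma \ref{min_char}), freeness (Lemma \ref{free_char}), and the bounded-shift homogeneity needed for (c) are all available with a constant fixed in advance by $p$.

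Beyond that central gap, two further points. First, it is not clear your strategy can be repaired at all as stated: the theorem asserts the existence of \emph{some} $x$ with minimal free orbit closure satisfying (c), not that such an $x$ can be found inside an arbitrary prescribed minimal free subshift $Y$; by fixing $Y$ in the ground model you have strengthened the statement you must force, and you give no argument that this stronger statement is true (nor does \cite{grp_color_bernoulli}). Fixing $Y$ also makes clauses (a) and (b) circular for general countable $\Gamma$, since the existence of a free minimal subshift of $k^\Gamma$ is essentially clauses (a) and (b) of the very theorem being cited; note the paper itself only proves the case $\Gamma=\Z$, $k=2$ (Theorem \ref{GJS}) and states that the general case requires a different argument. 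Second, some smaller repairs would be needed even on your own terms: the generic point lies in $\dot Y$ (the name for the closure of $\check Y$), not in $\check Y$, so $\one\forces\cl(\Gamma\cdot\dot x)=\dot Y$; and the passage from ``$g^{-1}A_n$ non-meager in $V$'' to ``some $W\leq V$ forces $g\cdot\dot x\in\dot A_n$'' needs to be routed through the universally Baire name machinery of Section \ref{abs:sec} (interpreting $\dot x$ by countably many dense sets and using that the resulting points are comeager in $V$), which is doable but is an argument, not an identity. The reduction of the theorem to ``some condition forces (a), (b), (c)'' via Propositions \ref{UB_sigma-alg} and \ref{gen_abs_prop} is fine and matches the paper; everything after that hinges on the unproved coordination step.
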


Our interest will be primarily in the last clause, although in this generality,
\cite{grp_color_bernoulli} represents the first proof 
of the first two clauses in the theorem.

We will focus our attention on the special case $\Gamma = \Z$ and $k =2$
(the case $\Z^d$ and $k$ arbitrary is just notationally more complicated,
but the full generality of the theorem requires a different argument).
In this context, the above theorem can be rephrased as follows.

\begin{thm} \label{GJS} \cite{grp_color_bernoulli}
Suppose that $\Seq{A_n : n < \infty}$ is a vanishing marker sequence
for the free part of the action of $\Z$ on $2^\Z$ by shift.
For every $f: \N \to \N$ such that $\lim_n f(n) = \infty$,
there exists an $x$ in $2^\Z$ such that:
\begin{enumerate}[\indent a.]

\item the closure of the orbit of $x$ is contained in the free part of the action;

\item the closure of the orbit of $x$ is a minimal nonempty closed subset
of $2^\Gamma$ which
is invariant under the action;

\item there are infinitely many $n$ such that,
for some $i \in [-f(n),f(n)]$, $x + i$ is in $A_n$.

\end{enumerate}
\end{thm}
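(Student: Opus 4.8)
The plan is to adjoin, by forcing, a point $\dot x \in 2^{\Z}$ satisfying clauses (a)--(c), and then to transfer the mere \emph{existence} of such an $x$ back to the ground model by an absoluteness argument. For the transfer, note that the set
\[
Z := \{x \in 2^{\Z} : x \textrm{ satisfies clauses (a), (b), and (c) of Theorem \ref{GJS}}\}
\]
is Borel: clause (a), that the subshift $X_x$ generated by $x$ contains no periodic point, and clause (b), that every finite subword of $x$ occurs in $x$ with bounded gaps, are arithmetic conditions on $x$, while clause (c) is a countable Boolean combination of the Borel-in-$x$ conditions ``$x + i \in A_n$'' (the fixed function $f$ and the fixed Borel sets $A_n$ enter only as parameters). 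By Proposition \ref{UB_sigma-alg}, $Z$ is universally Baire, so by Proposition \ref{gen_abs_prop} the statement ``$Z \neq \emptyset$'' is generically absolute. Hence it is enough to produce a forcing $\Qcal$ with $\one \forces \dot x \in \dot Z$ for a suitable canonically definable generic point $\dot x$, for then $\one \forces \dot Z \neq \emptyset$ and the conclusion follows.

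The forcing $\Qcal$ will be a ``minimal subshift'' forcing, enriched so as to diagonalize against the markers. A condition records: a finite window of $\Z$ together with the values of $x$ on it; finitely many \emph{recurrence promises}, each a finite word together with a bound on the gaps with which it is required to occur in $x$, and finitely many \emph{aperiodicity promises}, forbidding specified periods on sufficiently long subwords, so that the limiting point is forced to be uniformly recurrent with a free subshift; and finitely many \emph{marker promises}, each a tuple $(n, i, s)$ declaring that the shift $\dot x + i$, for a chosen $i$ with $|i| \leq f(n)$, extends a finite piece $s$ drawn from a fixed system of finite approximations to a Borel code for $A_n$. A stronger condition enlarges the window and all promises coherently. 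Density arguments exactly of the kind in \S\ref{formalism:sec} then show that $\one$ forces clauses (a) and (b), since every period is eventually forbidden and every finite subword is eventually assigned a recurrence bound. The feature exploited below is that $\Qcal$ is homogeneous enough that a new marker promise can always be inserted into a given condition without conflicting with its recurrence and aperiodicity promises.

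For clause (c), fix $N \in \N$; we show the set of conditions forcing $\exists n \geq N\ \exists i \in [-f(n), f(n)]\ (\dot x + i \in \dot A_n)$ is dense. Given $p$, choose $n \geq N$ so large that $f(n)$ exceeds the size of all data in $p$. Since $A_n$ meets every orbit of the shift --- a property which by Proposition \ref{gen_abs_prop} persists to the generic extension and in particular applies to the new point $\dot x$ --- and using the homogeneity of $\Qcal$ to make room, extend $p$ to a condition $q$ carrying a fresh marker promise $(n, i, s)$ with $|i| \leq f(n)$; as the approximations $s$ are refined below $q$ along the generic filter, $\dot x + i$ is forced into the Borel set $\dot A_n$, so $q \forces \dot x + i \in \dot A_n$. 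Letting $N$ vary yields infinitely many such $n$, which is clause (c). The quantifier and name manipulations here are routine applications of Properties \ref{ZFC_forced} and \ref{completeness_of_names} and Proposition \ref{check_quant}.

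I expect the main obstacle to be the interaction between clauses (b) and (c). Because each $A_n$ is merely Borel rather than clopen, forcing $\dot x + i \in \dot A_n$ is not a single finite commitment but must be secured by a fusion-style refinement of the marker promises, and these refinements have to be orchestrated so as never to violate the recurrence and aperiodicity promises that are responsible for minimality. Making this compatibility precise --- and checking that the hypothesis $\lim_n f(n) = \infty$ leaves just enough room in each window $[-f(n), f(n)]$ to absorb a marker promise without disturbing the emerging minimal structure --- is the technical heart of the argument, and the homogeneity of $\Qcal$ is the device that makes the necessary rearrangements possible.
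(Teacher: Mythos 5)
Your outer strategy is the paper's: the set of $x$ satisfying (a)--(c) is a countable Boolean combination of Borel (hence universally Baire, Proposition \ref{UB_sigma-alg}) conditions, so by Proposition \ref{gen_abs_prop} it suffices to force a witness, and your Borel reformulations of (a) and (b) agree with Lemmas \ref{free_char} and \ref{min_char}. The gap is in the part you defer as ``the technical heart'': the forcing you sketch does not do the work, for two reasons. First, a ``marker promise'' $(n,i,s)$ consisting of a finite approximation to a Borel code cannot force $\dot x + i \in \dot A_n$; membership of the generic point in a merely Borel set is not a finite commitment, and the sentence ``as the approximations $s$ are refined below $q$ along the generic filter, $\dot x + i$ is forced into $\dot A_n$'' conflates what happens along the generic with what a single condition $q$ forces. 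Second, and more fundamentally, nothing in your density argument produces the bound $|i| \le f(n)$: the fact that $\dot A_n$ meets the orbit of $\dot x$ (which does transfer by absoluteness) yields a shift $i$ with no control whatsoever on its size, and ``using the homogeneity of $\Qcal$ to make room'' is not a mechanism for pulling that witness into the window $[-f(n),f(n)]$ while staying compatible with your recurrence and aperiodicity promises. These two problems are exactly what you flag and postpone, so the proposal stops short of a proof.

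The paper resolves both at once with a single structural idea you are missing. Conditions are binary words on integer intervals, and $q \le p$ iff $q$ is a concatenation of translates of $p$ and of its bitwise complement $\bar p$, with both $p$ and a translate of $\bar p$ occurring. This builds minimality (Lemma \ref{min_char}) into the order itself --- the union of \emph{any} filter is uniformly recurrent --- so there are no recurrence or aperiodicity side conditions to protect, no fusion, and hence no compatibility problem between (b) and (c); freeness needs only countably many dense sets $D_{p,i}$. It also yields the needed bounded-translation homogeneity: if $p$ forced the failure of (c) beyond some $m$, one passes to $q := p \cup (\bar p + l)$ with $l = |I_p|$, and the periodic block structure below $q$ guarantees that every $r \le q$, after translating by some $j$ with $0 \le j < 2l$ (choose $j$ with $i - j$ a multiple of $2l$), becomes compatible with $p$. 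Taking $n \ge m$ with $f(n) > 2l$ and $r \le q$ forcing $\dot x + i \in \dot A_n$ for some $i$ (possibly huge), translation-invariance of the forcing --- translates of exhaustive sets are exhaustive, so $\dot x - i + j$ is again generic and $\dot A_n$ is canonically reinterpreted --- gives that $r - i + j$ forces $\dot x + j \in \dot A_n$ with $0 \le j < 2l \le f(n)$; a common lower bound of $r-i+j$ and $p$ then forces a contradiction. It is this conversion of an unbounded witness into a bounded one, via the self-similar order and translation invariance, that your proposal would need and does not supply.
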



It will be useful to have a more combinatorial way of
formulating the first two conclusions of the theorem.
They are provided by the following lemmas.

\begin{lem} (see \cite{grp_color_bernoulli}) \label{free_char}
If $x$ is in $2^\Z$, then the closure of the orbit of $x$ is contained
in the free part of the action exactly
when the following condition holds:
for all $a \in \Z \setminus \{0\}$, there exists a finite interval
$B \subseteq \Z$ such that for all $c \in \Z$ there is a $b \in B$
with
\[
x(a + b + c) \ne x(b+c).
\]
\end{lem}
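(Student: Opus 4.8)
The plan is to prove both directions of the equivalence by elementary means; no forcing is involved in this lemma. Throughout I would write the shift action additively, so that $x+g$ denotes the point with $(x+g)(n)=x(n+g)$, and I would use two standard facts. First, a point $y\in 2^\Z$ lies in the free part exactly when it has no nonzero period, i.e. $y+a\neq y$ for every $a\in\Z\setminus\{0\}$ (for $\Z$ a nontrivial stabilizer is precisely $a\Z$ for some $a\neq 0$). Second, since $2^\Z$ carries the product topology, $y$ lies in the closure of the orbit of $x$ if and only if for every finite interval $I\subseteq\Z$ there is a $c\in\Z$ with $y(n)=x(n+c)$ for all $n\in I$; that is, every finite window of $y$ occurs in $x$.

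For the ``only if'' direction I would argue by contraposition. Assume that for some $a\neq 0$ the stated condition fails, so that for every finite interval $B$ there is an integer $c_B$ with $x(a+b+c_B)=x(b+c_B)$ for all $b\in B$. Taking $B_k:=[-k,k]$ and $c_k:=c_{B_k}$, the sequence of points $x+c_k$ has, by compactness of $2^\Z$, a pointwise convergent subsequence with limit $y$, and $y$ lies in the closure of the orbit of $x$. Fixing $n\in\Z$ and letting $k$ run through the subsequence with $n\in[-k,k]$, the identity $x(a+n+c_k)=x(n+c_k)$ passes to the limit and gives $y(n+a)=y(n)$; hence $y$ has the nonzero period $a$ and is not in the free part, so the orbit closure is not contained in the free part.

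For the ``if'' direction, suppose the condition holds and let $y$ be any point of the orbit closure that is \emph{not} in the free part, so $y$ has some period $a\neq 0$. Let $B$ be the finite interval supplied by the condition for this $a$, and fix a finite interval $I$ containing both $B$ and $B+a$. By the window characterization there is a $c$ with $y(n)=x(n+c)$ for all $n\in I$. Applying the condition to this $c$ yields a $b\in B$ with $x(a+b+c)\neq x(b+c)$; since $b$ and $a+b$ both lie in $I$, this says exactly $y(a+b)\neq y(b)$, contradicting $y(n+a)=y(n)$. Hence no such $y$ exists, and the orbit closure is contained in the free part.

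The only real care required is index bookkeeping: in the ``if'' direction the interval must be enlarged from $B$ to (an interval containing) $B\cup(B+a)$ so that a single window of $y$ simultaneously captures the values at $b$ and at $a+b$, and in the ``only if'' direction one must track which coordinates of the translates $x+c_k$ have already stabilized before passing to the limit. I expect neither to be a genuine obstacle; the argument is finitary combinatorics together with compactness of Cantor space.
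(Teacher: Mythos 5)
Your proof is correct. The paper itself gives no argument for Lemma \ref{free_char}---it is stated with a citation to Gao--Jackson--Seward---so there is nothing in the text to compare against; your two-direction argument is the standard elementary one: when the condition fails you use compactness of $2^{\Z}$ to extract from the translates $x+c_k$ a limit point of the orbit with nonzero period $a$, and when it holds you use the finite-window characterization of the orbit closure, enlarging $B$ to an interval containing $B\cup(B+a)$ so that one window of $y$ sees both $b$ and $a+b$, to rule out any periodic point in the closure. Both the identification of the free part with the set of points having no nonzero period and the coordinatewise passage to the limit are handled correctly, so the proposal is a complete, self-contained proof of the lemma the paper defers to the reference.
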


\begin{lem} (see \cite{grp_color_bernoulli}) \label{min_char}
If $x$ is in $2^\Z$, then the closure of the orbit of $x$
is a minimal closed invariant subset of $2^\Z$ if and only if
the following condition holds:
for every finite interval $A \subseteq \Z$, there is a finite interval
$B \subseteq \Z$ such that for all $c \in \Z$ there is a $b \in B$ such that
for all $a \in A$
\[
x(a + c) = x(a+b).
\]
\end{lem}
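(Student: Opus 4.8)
The plan is to recognize the displayed combinatorial condition as the classical statement that $x$ is \emph{almost periodic} (uniformly recurrent) under the shift, and then to carry out the standard dynamical argument identifying almost periodicity of a point with minimality of its orbit closure. Write $K := \overline{\{x+i : i \in \Z\}} \subseteq 2^\Z$ for the orbit closure of $x$. The one basic fact I will use repeatedly is the dictionary between the product topology and finite patterns: for $y,z \in 2^\Z$ one has $z \in \overline{\{y+i : i \in \Z\}}$ if and only if for every finite interval $W$ there is a $c \in \Z$ with $z(a) = y(a+c)$ for all $a \in W$, since a basic open neighbourhood of $z$ constrains only finitely many coordinates. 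Finally I record the general topological-dynamics fact that a closed invariant set is minimal exactly when every one of its points generates it; since $K$ is closed and shift-invariant, $K$ is minimal if and only if $x \in \overline{\{y+i : i \in \Z\}}$ for every $y \in K$, i.e. (by the dictionary) if and only if every finite pattern occurring in $x$ occurs in every $y \in K$.

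For the forward implication, assume $K$ is minimal and fix a finite interval $A$. If no finite interval $B$ as in the statement existed, then for each $N$ one could choose a translate $c_N$ witnessing the failure of the condition on the window of radius $N$ about $c_N$; replacing $x$ by the shift carrying $c_N$ to the origin and passing to a convergent subsequence in the compact space $2^\Z$ yields a point $y \in K$ in which the pattern of $x$ on $A$ does not occur within any bounded window of the origin, hence does not occur at all. This contradicts $x \in \overline{\{y+i : i \in \Z\}}$, which holds because $K$ is minimal. Tracking the bound extracted from this argument produces the desired interval $B$.

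For the converse, assume the combinatorial condition holds for $x$; I will show that $x \in \overline{\{y+i : i \in \Z\}}$ for an arbitrary $y \in K$, which gives minimality of $K$. Fix a finite window $W$ and let $N$ be a bound, furnished by the hypothesis applied to $W$, beyond which the pattern $x \restriction W$ is sure to recur in $x$; that is, every interval of length $N$ contains a translate of $W$ on which $x$ reproduces the pattern $x \restriction W$. Since $y \in K$, pick $m$ exceeding $N$ plus the diameter of $W$ and a translate $c'$ with $y(a) = x(a + c')$ for all $a$ in $[-m,m]$. The interval $[c'-N, c'+N]$ then contains a translate of $W$ on which $x$ exhibits the pattern $x \restriction W$, and this translate lies inside $[c'-m, c'+m]$, where $x$ agrees with a shift of $y$; hence the pattern $x \restriction W$ occurs in $y$. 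As $W$ was arbitrary, $x \in \overline{\{y+i : i \in \Z\}}$.

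The step I expect to be the main obstacle is the uniformity in the forward direction: upgrading the merely pointwise conclusion ``$x$ reappears, up to translation, inside each $y \in K$'' to a single finite interval $B$ that works simultaneously for every translate $c$. This is precisely where compactness of $2^\Z$ --- equivalently, K\"onig's lemma applied to the finitely branching tree of patterns of $x$ on $A$ --- is indispensable, and where one must be careful to align the auxiliary window on which agreement is tested with the interval $A$ in the statement of the lemma.
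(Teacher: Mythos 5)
Your argument is correct, and there is nothing in the paper to compare it against: the paper states this lemma without proof, citing \cite{grp_color_bernoulli}. Both halves of your proof are the standard topological-dynamics argument and go through --- the compactness extraction in the forward direction does exactly the job you flagged as the main obstacle (if no single $B$ worked, the limits of the shifts centered at the witnesses $c_N$ produce a $y$ in the orbit closure omitting the pattern $x\restriction A$ entirely, contradicting that $x$ lies in $\overline{\{y+i : i \in \Z\}}$), and the converse correctly transfers the syndetic bound through a long window of agreement between $y$ and a shift of $x$. One caveat is worth recording. The condition as literally displayed, ``for all $c$ there is $b\in B$ with $x(a+c)=x(a+b)$ for all $a\in A$,'' is satisfied by \emph{every} $x\in 2^\Z$: only finitely many patterns on $A$ are realized by $x$, so any finite interval $B$ containing one witness position for each realized pattern works. (For instance the indicator function of $\{0\}$ satisfies it, although its orbit closure contains the constant-zero fixed point and is not minimal.) So the literal display cannot be the intended condition; the intended one, in the form used in \cite{grp_color_bernoulli} and parallel to Lemma \ref{free_char}, is that for all $c$ there is $b\in B$ with $x(a+b+c)=x(a)$ for all $a\in A$, i.e.\ the home pattern $x\restriction A$ recurs in every window $B+c$. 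That uniform-recurrence reading is precisely the one your proof uses (both in the forward extraction and in the converse, where you assume every interval of length $N$ contains a recurrence of $x\restriction W$), so your identification of the display with almost periodicity silently corrects a typo rather than following the printed formula; with that reading the proof is complete.
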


Define $\Qcal$ to consist of all finite partial functions $q : \Z \to 2$ such that
the domain of $q$ is an interval of integers, denoted $I_q$.
If $n$ is in $\Z$ and $q$ is in $\Qcal$, then the \emph{translate of $q$ by $n$}
is denoted
$q + n$ and is defined by $(q+n)(i) = q(i-n)$
(with $q+n$ having domain $\{i \in \Z : i-n \in I_q\}$).
If $q$ is in $\Qcal$, define $\bar q$ to be the bitwise complement of $q$:
$\bar q(i) := 1-q(i)$.
A disjoint union of conditions which is again a condition
will be referred to as a \emph{concatenation}.

The order on $\Qcal$ is defined by $q \leq p$ if $q = p$
or else $q$ is a concatenation of a set $S$ of conditions, each of which is a translate
of $p$ or $\bar p$ and such that $p$ and a translate of $\bar p$ are in $S$.
If $G$ is a filter in $\Qcal$ such that $\bigcup G$ is a total function $x$
from $\Z$ to $2$, then
it is straightforward to check that $x$
satisfies the conclusion of Lemma \ref{min_char} and thus that the closure
of the orbit of $x$ is a minimal invariant closed subset of $2^{\Z}$.

Under a mild genericity assumption on $G$,
the closure of the orbit of $x$ will be contained in the free part of the action.
For each $p \ne \one$ in $\Qcal$ and $i < \infty$,
define $D_{p,i}$ to be the set of all $q$ in $\Qcal$ such that either $q$
is incompatible with $p$ or else $q \leq p$, $m+i$ is in the domain of $q$, and
$q(m+i) \ne q(m)$, where $m := \max (I_p)$.

\begin{lem}
Each $D_{p,i}$ is a dense subset of $\Qcal$ and if $G \subseteq \Qcal$
is a filter which intersects $D_{p,i}$ for each 
$p \in \Qcal \setminus \{\one\}$ and $i < \infty$,
then $x := \bigcup G$ satisfies that the closure of the orbit of $x$
is contained in the free part of the action.
\end{lem}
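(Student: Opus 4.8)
The plan is to establish the two clauses separately. Throughout, write $\ell_q := |I_q|$, and recall that $q \le p$ with $q \ne p$ forces $q$ to be tiled by translates of $p$ and $\bar p$ with the tile $p$ itself occurring at $I_p$; thus the tiles are blocks of length $\ell_p$ aligned to $I_p$, the polarity (copy of $p$ versus copy of $\bar p$) of each block may be chosen freely subject to at least one copy of $\bar p$ occurring, and in particular $q \restriction I_p = p$.

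\emph{Density of $D_{p,i}$.} Fix $p \ne \one$, $i$, and an arbitrary $q_0 \in \Qcal$; put $m := \max(I_p)$. If $q_0$ is incompatible with $p$ then $q_0$ itself lies in $D_{p,i}$, so suppose $q_0$ and $p$ have a common extension $r$ (so $r \le p$), and build the required member of $D_{p,i}$ below $r$. Since $r \le p$ we have $r \restriction I_p = p$, and $r$ attains both bit values ($r$ contains the block $p$ and a translate of $\bar p$), so the same holds of $\bar r$. The idea is to build $q \le r$ (hence $q \le q_0$ and $q \le p$) as a concatenation of translates of $r$ and $\bar r$, anchored by the block $r$ at $I_r$ and containing a translate of $\bar r$, arranged so that the (unique) block of this length-$\ell_r$ tiling covering the coordinate $m+i$ is the translate of $r$, or of $\bar r$, whose value at $m+i$ equals $1 - p(m)$; since $q \restriction I_p = p$ gives $q(m) = p(m)$, this yields $q(m+i) \ne q(m)$, so $q \in D_{p,i}$. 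The delicate point, which I expect to be the main obstacle, is the block bookkeeping forced by the rigid alignment to $I_p$ and to $I_r$: the block covering $m+i$ sits at a position determined modulo $\ell_r$, so one must check that its polarity can always be chosen so as to realize the single prescribed value $1-p(m)$ at the coordinate $m+i$, and in particular that one is never already locked into the value $p(m)$ there by the part of $r$ that is already decided; verifying that the alignment inherited from $p$ prevents such a lock is the heart of the argument.

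\emph{The genericity consequence.} Let $G$ meet every $D_{p,i}$ and set $x := \bigcup G$; we may assume $G$ is generic enough that $x$ is a total function $\Z \to 2$, since the sets $\{q : n \in I_q\}$ are plainly dense. By Lemma \ref{free_char} it is enough to show that for each $a \in \Z \setminus \{0\}$ the set $S_a := \{d \in \Z : x(d+a) \ne x(d)\}$ is syndetic (has bounded gaps); since $S_{-a}$ is a translate of $S_a$, the condition in Lemma \ref{free_char} is equivalent for $a$ and for $-a$, so we may take $a = i \ge 1$. Choose any nonempty $p_0 \in G$, put $m := \max(I_{p_0})$, and use that $G$ meets $D_{p_0,i}$: because $p_0 \in G$, the incompatibility alternative is impossible, so there is $q \in G$ with $q \le p_0$, $\{m, m+i\} \subseteq I_q$, and $q(m+i) \ne q(m)$. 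Now for every $k \in \Z$, totality of $x$ and directedness of $G$ provide $q' \in G$ with $q' \le q$ and $I_q + k\ell_q \subseteq I_{q'}$; since $q' \le q$ is tiled by translates of $q$ and $\bar q$ along the grid of $I_q$, the block of $q'$ over $I_q + k\ell_q$ equals $x \restriction (I_q + k\ell_q)$ and is a translate of $q$ or of $\bar q$. Complementation preserves the inequality $q(m) \ne q(m+i)$, so $x(m + k\ell_q) \ne x(m + i + k\ell_q)$ for every $k \in \Z$; hence $S_i$ contains $\{m + k\ell_q : k \in \Z\}$, whose consecutive gaps all equal $\ell_q$. Therefore the criterion of Lemma \ref{free_char} holds for $a = i$ with $B$ any interval of length $\ell_q$, and by that lemma the closure of the orbit of $x$ is contained in the free part of the action.

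In short, the second half reduces essentially without computation to the single observation that every $q \in G$ tiles $x$ by copies of $q$ and $\bar q$ along the grid of $I_q$; the real work sits in the density argument, where one must produce an extension whose block structure pins one prescribed coordinate to a prescribed bit while respecting the alignment forced by $p$.
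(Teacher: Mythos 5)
Your second half is essentially correct: once some $q\in G$ with $q\le p_0$ has two points of $I_q$ at distance $i$ on which $q$ disagrees, the fact that every element of $G$ below $q$ is tiled by translates of $q$ and $\bar q$ along the grid anchored at $I_q$ gives disagreements of $x$ at gap $i$ in every block, hence with gaps bounded by $|I_q|$, and Lemma \ref{free_char} (plus the $a\mapsto -a$ symmetry) finishes. The genuine gap is in the density half: the step you defer --- checking that one is ``never already locked into the value $p(m)$'' at the coordinate $m+i$ --- is exactly where the argument breaks down, and in fact no bookkeeping can rescue it. Concretely, let $p$ have domain $\{0\}$ with $p(0)=0$, let $i=2$, and let $q_0$ have domain $\{0,1,2,3\}$ with values $0,1,0,1$ (blocks $p$, $\bar p+1$, $p+2$, $\bar p+3$), so $q_0\le p$ and $m=0$. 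Every $q\le q_0$ satisfies $q\restriction I_{q_0}=q_0$, hence $q(m+i)=0=q(m)$, and also $q\le p$, so $q$ is compatible with $p$; therefore no extension of $q_0$ lies in $D_{p,2}$. Thus $D_{p,i}$, read literally with the fixed coordinates $m$ and $m+i$, is not dense (not even predense), and in particular no choice of polarities below a common extension $r$ of $q_0$ and $p$ can work once $m+i\in I_r$ with $r(m+i)=r(m)$. (The paper states this lemma without proof, so there is no argument there to fall back on; the density clause needs the definition repaired, not a cleverer tiling.)

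The natural repair is to quantify the position of the disagreement existentially: take $D_{p,i}$ to consist of all $q$ that are either incompatible with $p$, or satisfy $q\le p$ together with the existence of some $s$ with $s,s+i\in I_q$ and $q(s)\ne q(s+i)$. Density of this set still needs an argument of the kind you gestured at: with $r\le q_0,p$ a common extension, $\ell:=|I_r|$ and $i=k\ell+r'$, either the $\ell$-periodic biinfinite extension of $r$ already has a disagreement at gap $i$, in which case a long enough concatenation of copies of $r$ (with one $\bar r$ appended to satisfy the definition of $\le$) works; or that periodic word is $i$-periodic, in which case flipping a single block sufficiently far from block $0$ produces a pair $s,s+i$ straddling an unflipped block and the flipped one, and complementation forces a disagreement there. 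Your tiling argument in the second half uses only the existence of \emph{some} pair $s,s+i\in I_q$ with $q(s)\ne q(s+i)$, never its location, so it transfers verbatim to the corrected dense sets. One smaller point: you assume $x=\bigcup G$ is total by invoking dense sets not among the $D_{p,i}$; this matches the standing assumption in the paragraph of the paper preceding the lemma, but it is not a consequence of meeting the $D_{p,i}$ alone, since nothing in those sets forces the domains of conditions in $G$ to be unbounded below, so it should be stated as an added hypothesis.
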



Thus we have shown that $\one$ forces that $\dot x = \bigcup G$ satisfies that
the closure of the orbit of $\dot x$ is minimal and contained in the free part of the Bernoulli shift.
The following proposition, when combined with Proposition \ref{gen_abs_prop},
implies Theorem \ref{GJS}.

\begin{prop}
Suppose that $\Seq{A_n : n < \infty}$ is a vanishing sequence of
markers for the free part of
the Bernoulli shift $\Z \act 2^{\Z}$ and that
$f:\N \to \N$ is a function such that $\lim_n f(n) = \infty$.
Every condition in $\Qcal$ forces that for every $m$ there is an $n \geq m$
such that $\dot x + k \in \dot A_n$ for some $k$ with $-f(n) \leq k \leq f(n)$.
\end{prop}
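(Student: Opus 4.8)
The plan is to recast the statement as a density requirement and then to produce the required near‑origin return by ``planting'' it, using the self‑similar structure of $\Qcal$ together with the fact that Borel markers persist under forcing.

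\emph{Reduction to density.} By Properties \ref{negation_monotone}, \ref{completeness_of_names} and \ref{ZFC_forced} (and Proposition \ref{check_quant} for the outer quantifier over $m$), the proposition reduces to: for every condition $p\in\Qcal$ and every $m<\infty$, the set of $r\le p$ for which there are $n\ge m$ and $k$ with $|k|\le f(n)$ and $r\forces\dot x+\check k\in\dot A_n$ is dense below $p$. Fix $p$ and $m$.

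\emph{Markers survive, and the orbit closure is minimal and free.} Each $A_n$ is Borel and hence universally Baire by Proposition \ref{UB_sigma-alg}; the assertions ``$A_n$ meets every orbit contained in the free part of $\Z\act 2^{\Z}$'' and ``the $A_n$ are decreasing with empty intersection'' are built from non‑emptiness of countable Boolean combinations of universally Baire sets and so are generically absolute by (the proof of) Proposition \ref{gen_abs_prop}. Combined with the fact --- guaranteed by the preceding lemma and by Lemmas \ref{free_char} and \ref{min_char} --- that $\one$ forces the orbit closure of $\dot x$ to be a minimal subset of $2^{\Z}$ contained in the free part, we get that $\one$ forces: each $\dot A_n$ is still a marker meeting the orbit of $\dot x$; the $\dot A_n$ are still decreasing with $\bigcap_n\dot A_n=\emptyset$; and hence the least $|j|$ with $\dot x+\check{j}\in\dot A_n$ is finite for each $n$ but tends to infinity with $n$. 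In particular no uniform bound on $|k|$ is possible, so it is essential that $\lim_n f(n)=\infty$, which is what will let us choose $n$ large.

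\emph{Planting the return.} Work in the ground model with the subshift $\Omega$ consisting of bi‑infinite concatenations of translates of $p$ and $\bar p$ (so every $r\le p$ forces $\dot x\in\check\Omega$), passing, as conditions are refined, to the minimal subshift it converges to. Using the Baire property: since $A_n$ meets every free orbit and $\Omega$ is topologically transitive with a comeager set of free points, $A_n\cap\Omega$ --- and, more usefully, $\bigcup_{|k|\le L}\bigl((A_n-k)\cap\Omega\bigr)$ for $L$ large --- is non‑meager in $\Omega$ and hence comeager in some basic relatively open set $[w]\cap\Omega$. Fix $n\ge m$ large together with such an $L\le f(n)$ and word $w$. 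Now exploit the self‑similarity of $\Qcal$: any $r\le p$ forces $\dot x$ to be a concatenation of $r$‑ and $\bar r$‑blocks, so by extending $p$ we secure $q\le p$ realizing $w$ within a bounded window about the origin and then forcing $\dot x$ to be a concatenation of $q$‑ and $\bar q$‑blocks, whence $w$ occurs in $\dot x$ within $[-f(n),f(n)]$ (up to the translate chosen). Since $\Qcal$ below $p$ is, up to inessential modifications, a Cohen‑type forcing, $\one$ forces $\dot x$ to avoid the (reinterpreted) ground‑model meager set $\bigcup_{j}\bigl(([w]\setminus\bigcup_{|k|\le L}(A_n-k))-j\bigr)$; hence $q$ forces $\dot x+\check k\in\dot A_n$ for some $|k|\le L\le f(n)$.

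\emph{The main obstacle.} The one genuinely delicate point is the coordination of scales: the word $w$ (equivalently $L$) at which $A_n$ becomes locally comeager grows with $n$, and one must still arrange $L\le f(n)$ for infinitely many $n\ge m$. Because $\lim_n f(n)=\infty$, because the relevant quantities ($w$, $L$, the window length, the stage lengths) are all computed in the ground model, and because one may interleave the choice of $n$ with the construction of the generic, this is handled by a fusion/bookkeeping argument that simultaneously meets the density requirements $D_{p,i}$ of the preceding lemma and, for infinitely many carefully chosen $n$, the near‑origin return requirement --- adapting the substitution (hence the minimal subshift) so that the relevant scales never outrun $f$. This is exactly where the homogeneity and flexibility of $\Qcal$ enter. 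The remaining technical item is the routine verification that the generic concatenation is ``Cohen‑generic enough'' to avoid the ground‑model meager sets used above, which follows since, as a forcing, $\Qcal$ below any condition is equivalent to a variant of Cohen forcing.
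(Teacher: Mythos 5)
There is a genuine gap, and it is exactly the point you flag as ``the main obstacle.'' Your planting step needs, for some $n\geq m$, a scale $L$ with $L\leq f(n)$ at which $\bigcup_{|k|\leq L}(A_n-k)$ is locally comeager in your subshift $\Omega$; but all that ``$A_n$ meets every free orbit'' gives (via the Baire property and countable additivity) is such an $L$ \emph{depending on $n$}, with no control relating $L(n)$ to $f(n)$. The proposed repair --- a fusion/bookkeeping argument that ``adapts the substitution so that the relevant scales never outrun $f$'' --- is not an argument; it is precisely the unresolved crux. There is a second problem in the final step: once you extend $p$ to a condition $q$ realizing the word $w$, every stronger condition forces $\dot x$ into the smaller subshift of concatenations of translates of $q$ and $\bar q$, which is typically a meager (even nowhere dense) subset of $\Omega$; so comeagerness of $\bigcup_{|k|\leq L}(A_n-k)$ computed relative to $\Omega$ tells you nothing about what $q$ forces, and the ``routine verification that the generic is Cohen-generic enough'' with respect to $\Omega$'s category does not go through. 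The link between ground-model category facts about the Borel set $A_n$ and statements of the form $q\forces \dot x+\check k\in\dot A_n$ has to pass through the universally Baire name $\dot A_n$, i.e., through the forcing relation itself, not through category in a fixed ground-model subshift.

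The paper's proof avoids the scale problem entirely by reversing the order of quantifiers: arguing by contradiction from a $p$ and $m$ forcing no returns, it first forms $q:=p\cup(\bar p+l)$ with $l$ the length of $I_p$, so that every $r\leq q$ becomes compatible with $p$ after a translation by some $j$ with $0\leq j<2l$ --- a bound depending only on $p$, not on $n$. Only then does it choose $n\geq m$ with $f(n)>2l$, uses Proposition \ref{gen_abs_prop} (it is forced that $\dot A_n$ meets the orbit of $\dot x$) to find $r\leq q$ and some $i\in\Z$ with $r\forces \dot x+\check i\in\dot A_n$, and finally uses the translation-equivariance of the forcing (translates of exhaustive sets are exhaustive, so $\dot x-i+j$ is also forced to be generic) to conclude that $r-i+j$ forces $\dot x+\check j\in\dot A_n$ with $|j|<2l<f(n)$; a common lower bound of $r-i+j$ and $p$ then yields the contradiction. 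Your write-up has the right absoluteness ingredients, but it is missing this homogeneity/translation device, which is what makes the window bound independent of $n$ and lets $\lim_n f(n)=\infty$ do its work; without it, the core of your argument is not established.
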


\begin{proof}
Suppose for contradiction that this is not the case.
Then there is a $p$ in $\Qcal$ such that $p$ forces:
there is an $m$ such that for every $n \geq m$,
if $-f(n) \leq k \leq f(n)$ then $\dot x + k \not \in \dot A_m$.
By replacing $p$ with a stronger condition if necessary,
we may assume that there is an $m$ such that $p$ forces that for every $n \geq m$,
if $-f(n) \leq k \leq f(n)$ then $\dot x + k \not \in \dot A_m$.
Let $q := p \cup (\bar p + l)$ where $l$ is the length of $I_p$.
Observe that $q \leq p$ and that if $r \leq q$ and $i \in \Z$, then there is a $j$ with
$0 \leq j < 2l$ such that $r - i+j$ is compatible with $p$;
simply choose $j$ such that $i-j$ is a multiple of $2l$.

Let $n \geq m$ be such that $f(n)$ is greater than $2l$ and find a $r \leq q$
and an $i \in \Z$ such that $r$ forces that $\dot x + i$ is in $\dot A_n$.
This is possible since it is forced that $\dot A_n$
meets every orbit
(strictly speaking, we are appealing to Proposition \ref{gen_abs_prop} here).
Now, let $j < 2|p|$ be such that $r$ is compatible with $p+i+j$.

We now have that $r-i+j$ forces that $\dot x + j$ is in $\dot A_n$.
This follows from the fact that
\[
\one \forces \dot x - i + j \textrm{ is generic over the ground model}.
\]
(This follows from the observation that if $E \subseteq \Qcal$ is exhaustive, then
so is any translate of $E$.)
Recall that $r-i+j$ is compatible with $p$ and let
$s$ be a common lower bound for $r-i+j$ and $p$.
It follows that $s$ forces that both $\dot x + j \not \in \dot A_n$ and
$\dot x + j \in \dot A_n$, a contradiction. 
\end{proof}

\section{Todorcevic's absoluteness theorem for Rosenthal compacta}

\label{RC_absolute:sec}

We will now use the results of Section \ref{abs:sec}
to prove Todorcevic's absoluteness theorem for Rosenthal compacta.
Fix a Polish space $X$.
Recall that a real valued function defined on $X$ is \emph{Baire class 1}
if it is the limit of a pointwise convergent sequence of continuous functions.
Baire characterized functions $f$ which are \emph{not} Baire class 1 as those for which there exist rational
numbers $p < q$ and nonempty sets $D_0,D_1 \subseteq X$ such that the closures of $D_0$ and $D_1$ coincide, have no isolated
points, and
\[
\sup_{x \in D_0} f(x) \leq p < q \leq \inf_{x \in D_1} f(x)
\]
(see \cite{Baire_char}).
The collection of all Baire class 1 functions on a Polish space $X$
is denoted $BC_1(X)$ and is equipped with the topology of
pointwise convergence.

A compact topological space which is homeomorphic to a subspace of $BC_1(X)$ is said
to be a \emph{Rosenthal compactum}.
This class includes all compact metric spaces and is closed under taking closed subspaces and
countable products.
The following are typical nonmetrizable examples.

\begin{example}[Helly's space; the double arrow]
The collection of all
nondecreasing functions from $[0,1]$ to $[0,1]$ is
known as Helly's space.
It is convex as a subset of $\R^{[0,1]}$.
The extreme points of this set are the characteristic functions of the intervals
$(r,1]$ and $[r,1]$.
This subspace is homeomorphic to the so-called \emph{double arrow space}:
the set $[0,1] \times 2$ equipped with the order topology
from the lexicographic order.  
\end{example}

\begin{example}[one point compactification]
The constant $0$ function, together with the functions $\delta_r : [0,1] \to \R$ defined by
\[
\delta_r(t) :=
\begin{cases} 1 & \textrm{ if } t = r \\
0 & \textrm{ otherwise.}
\end{cases}
\]
This is homeomorphic to the one point compactification of a discrete
set of cardinality $2^{\aleph_0}$.
\end{example}

Rosenthal compacta enjoy a number of strong properties similar to those of compact metric spaces.
One which will play an important role below is \emph{countable tightness}:
a topological space $Z$ is \emph{countably tight}
if whenever $a$ is in the closure of $A \subseteq Z$,
there is a countable $A_0 \subseteq A$ such that $a$ is in the closure of $A_0$.

\begin{thm} \cite{iso_th_Banach} \label{RC_ctbly_tight}
Rosenthal compacta are countably tight.
\end{thm}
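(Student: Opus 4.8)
The plan is to use forcing, together with the absoluteness machinery of Section \ref{abs:sec}, to reduce the problem to a situation that can be analyzed in a countable ``slice'' of the space. Suppose $K \subseteq BC_1(X)$ is a Rosenthal compactum, $A \subseteq K$, and $f$ lies in the closure of $A$ but in the closure of no countable subset of $A$. We may harmlessly assume $X = \Q$ (or any fixed countable metric space) by first passing to a countable dense subset on which the relevant functions are determined; the point is that each element of $K$, being Baire class $1$, is coded by a real. Thus $K$ is naturally a subset of $\R^\Q$, hence (after an identification) of a complete separable metric space, and by Proposition \ref{UB_sigma-alg} the Borel set $K$ and the conditions ``$g \in K$'', ``$g$ approximates $f$ on a given finite set of coordinates'' are universally Baire.

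First I would observe that the statement ``there is an element of $K$ that is in the closure of $A$ but not in the closure of any countable subset of $A$'' is, after the coding above, the assertion that a certain countable Boolean combination of universally Baire sets is nonempty, so by Proposition \ref{gen_abs_prop} it is generically absolute: if it holds in the ground model it holds after forcing with any $\Qcal$, and conversely. Next I would force with $\Cohen$ (or with $\Cohen_{\omega_1}$) to add a countable elementary-submodel-like object: more precisely, force with a poset whose generic object $M$ is a countable set of reals that is ``sufficiently closed'' — it contains a code for $f$, codes for enough of $A$, and is closed under the Baire-characterization witnesses (Definition/Baire's theorem, the $D_0, D_1$ from the start of Section \ref{RC_absolute:sec}). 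In the extension, since $f$ is not in the closure of any \emph{ground-model-countable} subset of $A$, and the extension's countable subsets of $A \cap M$ are ground-model countable, one derives that $f \notin \overline{A \cap M}$, so there is a basic open neighborhood $U$ of $f$ (determined by finitely many coordinates, all of which can be taken in $M$) with $U \cap A \cap M = \emptyset$.

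The main obstacle — and the heart of the argument — is to turn this separation into a contradiction using Baire's characterization of Baire class $1$ functions. Because $f \in \overline{A}$ but $U \cap (A \cap M) = \emptyset$ while $U$ depends only on coordinates in $M$, one extracts a net in $A$ converging to $f$ whose behavior on the coordinates in $M$ forces oscillation: one builds, inside the extension (then pulls back by absoluteness), rational levels $p < q$ and two dense-in-themselves sets $D_0, D_1 \subseteq X$ with coinciding closures on which some element witnesses $\sup_{D_0} \leq p < q \leq \inf_{D_1}$, contradicting that the relevant pointwise limit is Baire class $1$ — equivalently, contradicting that this limit lies in $BC_1(X)$, which it must since $K$ is a Rosenthal compactum and hence closed under the relevant limits. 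The delicate point is the bookkeeping that guarantees the oscillation is genuinely ``split'' by $M$: one needs the generic countable set $M$ to be elementary enough that the failure of tightness, reflected into $M$, produces the two sides $D_0$ and $D_1$ of a Baire oscillation. Once that is in place, the contradiction is immediate, and by Proposition \ref{gen_abs_prop} the original ground model must already have satisfied countable tightness.
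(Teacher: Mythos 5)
There is a genuine gap --- in fact several, and the most important one is that the hard content of the theorem is never supplied. Note first that the paper does not prove this statement at all: it is quoted as a known result (it is the Bourgain--Fremlin--Talagrand theorem, resting on Rosenthal's $\ell^1$-analysis) and used as a black box in Section \ref{RC_absolute:sec}, so any proof you give must actually carry the analytic weight itself. Your sketch does not. The paragraph you label ``the heart of the argument'' --- extracting rationals $p<q$ and dense-in-themselves sets $D_0,D_1$ with common closure witnessing a Baire oscillation from the failure of tightness --- is precisely the substance of the theorem, and you assert it (``one builds, inside the extension, \dots'') without any construction. Everything before it is scaffolding; without this step there is no proof.

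Two of the reductions are also unsound as stated. First, you cannot ``harmlessly assume $X=\Q$'' and view $K$ as a subspace of $\R^{\Q}$: a Baire class $1$ function is not determined, in the topology of pointwise convergence on $K$, by its restriction to a countable dense set (the characteristic function $\delta_r$ and the zero function agree on a dense set), and if $K$ embedded in a separable metric space it would be metrizable, contradicting the paper's own examples (Helly's space/the double arrow, and the one-point compactification of a discrete set of size $2^{\aleph_0}$). Second, Proposition \ref{gen_abs_prop} applies to the assertion that a \emph{fixed, countable} Boolean combination of universally Baire subsets of a complete metric space is nonempty; ``$K$ is not countably tight'' quantifies over arbitrary (possibly uncountable) $A\subseteq K$ and over \emph{all} countable subsets of $A$, and you give no coding of this into such a combination. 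Contrast this with the proof of Theorem \ref{RC_abs} in the paper, where ``some accumulation point of $\Fcal$ is not Baire class $1$'' is carefully rewritten, via Baire's oscillation criterion, as nonemptiness of a countable Boolean combination of open sets in $X^\omega\times X^\omega\times Z$; no analogous countable-witness structure for failure of tightness is exhibited in your argument. The intermediate forcing step (a Cohen-generic ``countable elementary-submodel-like'' set $M$ into which the failure of tightness ``reflects'') is likewise unsubstantiated: Cohen forcing adds no such object in any straightforward sense, and the claim that this reflection produces the two sides $D_0$, $D_1$ of an oscillation is exactly the missing idea, not a consequence of the machinery in Sections \ref{abs:sec} and \ref{RC_absolute:sec}.
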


In \cite{Rosenthal_cpt}, Todorcevic derived a number of properties of Rosenthal compacta by
showing that there is a natural way to reinterpret such spaces as Rosenthal compacta in
generic extensions.
This result is in fact a fairly routine consequence of the machinery
which was developed in Section \ref{abs:sec} above.

First, we must verify that elements of $\BC_1(X)$ extend to elements of $\BC_1(X)$
in the generic extension.

\begin{lem} \cite{Rosenthal_cpt} \label{BC_ext}
Suppose that $\Seq{ f_n : n < \infty}$ is a sequence of continuous
functions on a Polish space $X$.
The assertion that $\Seq{ f_n : n < \infty}$ converges pointwise is generically absolute.
Furthermore, if $\Seq{ f_n : n < \infty}$ and $\Seq{ g_n : n < \infty}$ are sequences
of continuous functions on $X$, the assertion that
$f_n-g_n \to 0$ pointwise on $X$ is generically absolute.
\end{lem}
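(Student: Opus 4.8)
The plan is to derive both claims directly from Proposition~\ref{gen_abs_prop} by writing the relevant ``bad set'' --- the set of points at which pointwise convergence fails --- as a countable Boolean combination of closed subsets of $X$, which are universally Baire by Proposition~\ref{UB_sigma-alg}. Fix a complete metric on $X$ compatible with its topology. Since $\R$ is the completion of $\Q$, a sequence of reals converges if and only if it is Cauchy, and this equivalence persists in any generic extension (where the reals are again a completion of $\Q$); so $\Seq{f_n : n < \infty}$ converges pointwise precisely when the set
\[
B := \{ x \in X : \Seq{f_n(x) : n < \infty} \textrm{ is not Cauchy}\}
\]
is empty. Setting $C_{k,m,n} := \{x \in X : |f_n(x) - f_m(x)| \geq 1/k\}$, each $C_{k,m,n}$ is closed by continuity of $f_m$ and $f_n$, and
\[
B = \bigcup_{k=1}^\infty \ \bigcap_{N=0}^\infty \ \bigcup_{m,n \geq N} C_{k,m,n},
\]
so $B$ is a countable Boolean combination of universally Baire sets.

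Next I would check that the canonical $\Qcal$-name $\dot B$ obtained from this formula (the same Boolean combination of the canonical names $\dot C_{k,m,n}$, as supplied by the proof of Proposition~\ref{UB_sigma-alg}) is interpreted in a generic extension as exactly the set of points of that extension's completion of $X$ at which the reinterpreted sequence $\Seq{\dot f_n : n < \infty}$ fails to be Cauchy, equivalently fails to converge. Granting this, ``$\Seq{f_n : n < \infty}$ converges pointwise'' becomes, in the terminology of Section~\ref{abs:sec}, the assertion that $\dot B$ is empty, and Proposition~\ref{gen_abs_prop} tells us that nonemptiness of $B$ is generically absolute. Generic absoluteness is closed under negation --- if $\phi$ is generically absolute then, by Property~\ref{negation_monotone}, for every $\Qcal$ and every $q \in \Qcal$ one has $q \forces \neg\phi$ iff no $r \leq q$ forces $\phi$ iff $\phi$ is false iff $\neg\phi$ is true --- so emptiness of $B$, and hence pointwise convergence of $\Seq{f_n : n < \infty}$, is generically absolute.

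The second assertion is handled identically, with
\[
B' := \bigcup_{k=1}^\infty \ \bigcap_{N=0}^\infty \ \bigcup_{n \geq N} \{x \in X : |f_n(x) - g_n(x)| \geq 1/k\}
\]
in place of $B$: this is again a countable Boolean combination of closed subsets of $X$, it is empty precisely when $f_n - g_n \to 0$ pointwise, and Proposition~\ref{gen_abs_prop} together with closure under negation finishes the argument.

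The step I expect to be the main obstacle is the identification carried out in the second paragraph: verifying that the abstract universally-Baire name $\dot B$ matches the concrete ``non-convergence set of $\Seq{\dot f_n : n < \infty}$'' in the generic extension. This requires unwinding how a continuous function on a Polish space (and a Borel set built from countably many of them) is reinterpreted after forcing, and checking that this reinterpretation commutes with the countable unions and intersections above --- routine once one is careful about the distinction, flagged in Section~\ref{prelim:sec}, between a complete metric space and the formal completion used to define $\dot X$.
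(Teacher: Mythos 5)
Your proposal is correct and follows essentially the same route as the paper: the paper's proof also expresses the failure of pointwise convergence (respectively of $f_n-g_n\to 0$) as a countable Boolean combination of sets of the form $\{x \in X : |f_i(x)-f_j(x)|>\epsilon\}$, observes it is empty exactly when the convergence assertion holds, and invokes Proposition~\ref{gen_abs_prop}. Your explicit remarks on closure of generic absoluteness under negation and on matching $\dot B$ with the non-convergence set in the extension are just elaborations of steps the paper leaves implicit.
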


\begin{proof}
Let $\Seq{ f_n : n < \infty}$ be sequence of continuous functions.
Observe that
\[
\bigcup_{\epsilon > 0} \bigcap_{n=0}^\infty \bigcup_{i,j \geq n} \{x \in X : |f_i(x) - f_j(x)| > \epsilon\}
\]
specifies a countable Boolean combination of open subsets of $X$ which is empty
if and only if $\Seq{ f_n : n < \infty}$ converges pointwise.
Thus the assertion that $\Seq{ f_n : n < \infty}$ converges pointwise is
generically absolute by Proposition \ref{gen_abs_prop}.
The second conclusion is verified in a similar manner.
\end{proof}

Now suppose that $\Qcal$ is a forcing, $X$ is a Polish space,
and $f$ is in $\BC_1(X)$.
By Lemma \ref{BC_ext}, $\Qcal$ forces that there is a unique element of $\BC_1(\dot X)$ which
extends $\check f$; fix a $\Qcal$-name $\dot f$ for this extension.
If $K \subseteq \BC_1(X)$ is a Rosenthal compactum,
then $\dot K$ is a $\Qcal$-name for the closure of the set of extensions of elements of
$\check K$ to $\dot X$.
(Specifically, it is a $\Qcal$-name for the closure of
$\{(\dot f,\one) : f \in K\}$ in $\dot \R^{\dot X}$.)
Todorcevic's absoluteness theorem can now be stated as follows.

\begin{thm} \cite{Rosenthal_cpt} \label{RC_abs}
Suppose that $X$ is a Polish space and $\Fcal$ is a family of Baire class 1 functions.
The assertion that every accumulation point of $\Fcal$ is Baire class 1 is generically absolute.
\end{thm}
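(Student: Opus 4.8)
The plan is to recast the statement as the generic absoluteness of the ill-foundedness of a tree of finite sequences and then to apply Proposition \ref{WF_abs}. In a generic extension, $\Fcal$ is reinterpreted as $\dot\Fcal := \{\dot f : f \in \Fcal\}$ (the extensions of its members to $\dot X$, as set up above) and $\overline\Fcal$ as $\overline{\dot\Fcal}$; since a statement is generically absolute if and only if its negation is, it suffices to show that $\overline\Fcal \not\subseteq \BC_1(X)$ is generically absolute. First I would make the routine reduction to the case that the members of $\Fcal$ take values in $[0,1]$, by composing with a fixed homeomorphism $\R \to (0,1)$ — this preserves membership in $\BC_1$ and identifies the accumulation points of $\Fcal$ in $\R^X$ with those accumulation points of the transported family that remain in $(0,1)^X$ — so that $\overline\Fcal$ may be computed inside the compact space $[0,1]^X$.

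Next I would establish a combinatorial reformulation, using Baire's characterization of the functions that are not Baire class $1$ recalled above together with the second countability of $X$: $\overline\Fcal \not\subseteq \BC_1(X)$ if and only if there are a sequence $\langle f_k : k < \infty\rangle$ of members of $\Fcal$, rationals $p < q$, and countable sets $D_0, D_1 \subseteq X$ with $\overline{D_0} = \overline{D_1}$ perfect such that $\limsup_k f_k(x) \le p$ for all $x \in D_0$ and $\liminf_k f_k(x) \ge q$ for all $x \in D_1$. One direction applies Baire's condition to a bad $g \in \overline\Fcal$, thins the witnessing dense sets to countable dense subsets of the perfect set, and pulls back a convergent sequence from the metrizable restriction space $\R^{D_0 \cup D_1}$ (the passage to a countable subfamily is of the same flavor as the countable tightness of Rosenthal compacta, Theorem \ref{RC_ctbly_tight}); the other direction passes to a cluster point $g$ of $\langle f_k\rangle$ in $[0,1]^X$ along an arbitrary ultrafilter on $\omega$ and appeals to Baire's criterion again, noting that $\overline{D_0} = \overline{D_1}$ is perfect, $g \le p$ on $D_0$, and $g \ge q$ on $D_1$.

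The heart of the argument will be to encode the right-hand side of this reformulation as the existence of an infinite branch through a tree $T_\Fcal$ of finite sequences whose membership relation mentions only \emph{basic rational balls} of $X$, \emph{members of $\Fcal$} (together with continuous approximants to them), and \emph{rationals} — never arbitrary points of $X$. A node of $T_\Fcal$ records a finite, $2$-coloured Cantor scheme of rational balls (nested, with rationally separated closures at splitting nodes, diameters shrinking) along with finitely many conditions from $\Fcal$, approximants, and rationals $p < q$, subject to bookkeeping that along colour-$0$ (resp.\ colour-$1$) nodes the approximants stay $\le p + 1/n$ (resp.\ $\ge q - 1/n$) on the ball on the leftmost descending chain below the node, and that every ball eventually splits into nodes of both colours. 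An infinite branch then produces a nonempty perfect set $P \subseteq X$ (completeness of $X$ making the nested closed balls meet), a sequence from $\Fcal$, rationals $p < q$, and for each scheme node $s$ a point $x_s \in P$ with $\limsup_k f_k(x_s) \le p$ or $\ge q$ according to the colour of $s$; since both colour classes of these points are dense in $P$, a cluster point $g$ of the sequence in $[0,1]^X$ lies in $\overline\Fcal \setminus \BC_1(X)$ by Baire's criterion. Conversely, a witness of the combinatorial reformulation threads into a branch by a fusion construction, so $T_\Fcal$ is ill-founded exactly when $\overline\Fcal \not\subseteq \BC_1(X)$. The reason for insisting that $T_\Fcal$ refer only to rational balls and ground-model data is that each of its defining clauses is then either a quantitative assertion about ball parameters — absolute by Proposition \ref{bounded_abs} — or an assertion that a closed rational ball is disjoint from a Borel (indeed $F_\sigma$) subset of $X$ cut out by the functions — an emptiness assertion about a universally Baire set, hence generically absolute by Propositions \ref{UB_sigma-alg} and \ref{gen_abs_prop} — and, crucially, each clause is unaffected by replacing $f$ by $\dot f$ and $X$ by $\dot X$, whose new points cannot disrupt statements about fixed rational balls. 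Hence the tree built in a generic extension from $\dot\Fcal$ and $\dot X$ is canonically isomorphic, via $f \mapsto \dot f$, to the reinterpretation of $T_\Fcal$, so $\overline{\dot\Fcal} \not\subseteq \BC_1(\dot X)$ holds there if and only if the reinterpreted $T_\Fcal$ has an infinite branch, which by Proposition \ref{WF_abs} occurs if and only if $T_\Fcal$ already has one, i.e.\ if and only if $\overline\Fcal \not\subseteq \BC_1(X)$.

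I expect the main obstacle to be the design of $T_\Fcal$: one must encode the witnessing perfect set and its two dense subsets \emph{entirely through rational balls} — so that the configuration cannot hide among the new points of $\dot X$ and the tree survives reinterpretation — while reconciling the fact that members of $\Fcal$ are merely Baire class $1$, and so not continuous, with bookkeeping strong enough to force the $\limsup$/$\liminf$ conclusions along branches, which forces a careful interleaving of continuous approximants; and the same bookkeeping must be weak enough to be satisfiable by the fusion argument used for the converse. Secondary technical points are the uniform-boundedness reduction and the verification that the Borel sets $f_n^{-1}((c,\infty))$ reinterpret to $(\dot f_n)^{-1}((c,\infty))$, so that the emptiness clauses really are insensitive to the reinterpretation.
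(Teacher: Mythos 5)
Your opening reduction is essentially the paper's: via Baire's characterization you replace ``some accumulation point of $\Fcal$ fails to be Baire class $1$'' by the existence of a countable configuration --- rationals $p<q$, two countable sets with a common perfect closure, and a sequence from $\Fcal$ (together with continuous approximants) exhibiting the oscillation --- and that part of your sketch is correct. Where you diverge is in the absoluteness device, and here your plan does a great deal of avoidable work. The paper simply codes such a configuration as a point of the completely metrizable space $X^\omega\times X^\omega\times Z$, where $Z$ is a countable product of \emph{discrete} spaces whose points are ground-model sequences of continuous functions converging pointwise to members of $\Fcal$, notes that the defining conditions form a countable Boolean combination of open subsets of that space, and invokes Proposition \ref{gen_abs_prop}. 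In particular there is no need to prevent the witnessing dense sets from ``hiding among the new points of $\dot X$'': Proposition \ref{gen_abs_prop} is formulated exactly so that witnesses may come from completions, so the points $a_k,b_k$ are allowed to be new, and the only data that must stay in the ground model --- the functions from $\Fcal$ and their approximant sequences --- is anchored automatically by the choice of $Z$ as a product of discrete spaces. Your insistence on eliminating points of $X$ in favour of rational balls, so as to reduce to ill-foundedness of a tree and Proposition \ref{WF_abs}, amounts to re-proving by hand the absoluteness that Proposition \ref{gen_abs_prop} already supplies.

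Beyond being roundabout, the proposal has a genuine gap: the heart of the argument --- the precise definition of $T_\Fcal$, the bookkeeping making branches yield the $\limsup$/$\liminf$ conclusions, the fusion argument for the converse, and above all the equivalence of ill-foundedness with the reinterpreted statement \emph{in the extension} --- is promised rather than carried out, and you yourself flag it as the main obstacle. Moreover, the one precise claim you do make about this step is false as stated: the tree ``built in a generic extension from $\dot\Fcal$ and $\dot X$'' is not canonically isomorphic to the reinterpretation of $T_\Fcal$, since the extension contains new continuous functions on $\dot X$, so a tree whose nodes may mention arbitrary continuous approximants acquires nodes with no ground-model counterpart. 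To repair this you must fix in the ground model one approximating sequence $\Seq{f^{(i)} : i<\infty}$ for each $f\in\Fcal$ and let nodes refer only to these indexed approximants (clause-by-clause absoluteness then follows from Proposition \ref{bounded_abs} together with arguments as in Lemma \ref{BC_ext}), and you must then prove in the extension that a witnessing configuration using arbitrary new approximants can be threaded into this restricted tree --- which does work, because $\dot f$ is by construction the pointwise limit of the extensions $\dot f^{(i)}$ (Lemma \ref{BC_ext}), but it is precisely the kind of verification your sketch passes over. With those pieces supplied your route would give a correct proof; the paper's appeal to Proposition \ref{gen_abs_prop} bypasses the entire construction.
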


\begin{proof}
Let $X$ and $\Fcal$ be fixed and let $\Qcal$ be a forcing.
It is sufficient to show that the assertion that $\Fcal$ has a pointwise accumulation point which
is not in $\BC_1(X)$ is equivalent to a certain countable Boolean combination of open sets in a completely metrizable
space being nonempty.
Let $Z$ be the set of all sequences
\[
\Seq{\Seq{f_{k,i} : i < \infty} : k < \infty}
\]
such that, for each $k$, $\Seq{f_{k,i} : i < \infty}$ is a sequence of continuous functions
which converges pointwise to an element of $\Fcal$.
We will regard $Z$ as being a product of discrete spaces, noting that with this topology, $Z$ is
completely metrizable.

Observe that if $g$ is a limit point of $\Fcal$ which is not in $\BC_1(X)$, then by Baire's characterization,
there are rational numbers $p < q$, sets $A := \{a_k : k < \infty\}$, $B := \{b_k : k < \infty\}$,
and $\{f_k : k < \infty\} \subseteq \Fcal$ such that:
\begin{enumerate}
\popcounter

\item $A$ and $B$ are contained in $X$, have no isolated points, and have the same closures;

\item if $k < l$, then $f_l(a_k) < p < q < f_l(b_k)$.

\pushcounter
\end{enumerate}
Moreover, one can select sequences $\Seq{f_{k,i} : i < \infty}$ of continuous functions such that
$f_{k,i} \to f_k$ pointwise for each $k$.
Thus we have that for every $k < l$ there is an $n$ such that if $n < j$,
then $f_{l,j} (a_k) < p < q < f_{l,j}(b_k)$.
It follows that there exist
\[
(\Seq{a_k : k < \infty} , \Seq{b_k : k < \infty} , \Seq{\Seq{f_{k,i} : i < \infty} : k < \infty})
\]
in $X^\omega \times X^\omega \times Z$
specifying objects with the above properties if and only if $\Fcal$ has an accumulation point outside
of $\BC_1(X)$.
Notice however, that these properties define a countable Boolean combination of open subsets of
$X^\omega \times X^\omega \times Z$ and therefore
the theorem follows from Proposition \ref{gen_abs_prop}.
\end{proof}

\section{$\sigma$-closed forcings}

\label{sigma-closed:sec}

There are two basic aspects of a forcing which are
of fundamental importance in understanding its properties:
how large are its families of pairwise incompatible elements and how frequently do directed families have lower bounds.
Properties of the former type are often referred to loosely as \emph{chain conditions};
we have already seen the most important of these in Section \ref{ccc:sec}.
Properties of the latter type are known as \emph{closure properties} of a forcing.
In this section, we will discuss the simplest and most important example of a closure property.

\begin{defn}[$\sigma$-closed]
A forcing $\Qcal$ is \emph{$\sigma$-closed} if whenever $\Seq{q_n : n < \infty}$ is
a $\leq$-decreasing sequence of elements of $\Qcal$, there is a $\bar q$ in $\Qcal$
such that $\bar q \leq q_n$ for all $n$.
\end{defn}

It is perhaps worth remarking that any forcing which is $\sigma$-closed and atomless
(i.e. every element has two incompatible extensions), necessarily has an antichain of 
cardinality of the continuum and so in particular is not c.c.c..
Like c.c.c. forcings, however, $\sigma$-closed forcings also preserve uncountability, 
although for a quite different reason.

\begin{prop}
Suppose that $\Qcal$ is a $\sigma$-closed forcing.
If $\dot f$ is a $\Qcal$-name and $p \in \Qcal$ forces that 
$\dot f$ is a function with domain $\omega$, then there is a $q \leq p$ and
a function $g$ such that
$q \forces \dot f = \check g$.
In particular $\one \forces \dot \aleph_1 = \check \aleph_1$ and
$\one \forces \dot \R = \check \R$.
\end{prop}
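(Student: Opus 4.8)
The plan is to prove the displayed claim by an $\omega$-length recursion together with $\sigma$-closure, and then to read off the two ``in particular'' statements. First I would note that, in the generality needed below, one may assume $p$ forces that the values of $\dot f$ lie in a fixed ground-model set $X$ (in the applications $X = \omega_1$ or $X = \Q$), so that $p \forces \dot f(\check n) \in \check X$ for every $n$; this is what licenses the appeals to Property~\ref{decide} that follow. Then I would recursively choose a $\leq$-decreasing sequence $p = p_0 \geq p_1 \geq p_2 \geq \cdots$ such that $p_{n+1}$ decides $\dot f(\check n)$, say $p_{n+1} \forces \dot f(\check n) = \check{g(n)}$ --- possible at each stage by Property~\ref{decide} --- and let $q$ be a lower bound of all the $p_n$, which exists since $\Qcal$ is $\sigma$-closed. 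Putting $g := \{(n, g(n)) : n < \omega\}$, we have $q \leq p$ and $q \forces \dot f(\check n) = \check{g(n)} = \check g(\check n)$ for each $n$ (the harmless last equality by Proposition~\ref{bounded_abs}), hence $q \forces \forall n \in \check\omega\,(\dot f(n) = \check g(n))$ by Proposition~\ref{check_quant}; as $q$ forces both $\dot f$ and $\check g$ to be functions with domain $\check\omega$, Property~\ref{ZFC_forced} gives $q \forces \dot f = \check g$.

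For $\one \forces \dot\aleph_1 = \check\aleph_1$: the easy half is $\one \forces \check\omega_1 \leq \dot\omega_1$, since each $\beta < \omega_1$ carries a surjection $h:\omega\to\beta$, so $\check h$ witnesses $\one \forces$``$\check\beta$ is countable'', and then Proposition~\ref{check_quant} gives that $\one$ forces every member of $\check\omega_1$ is countable. For the other half, suppose some $p$ forced $\check\omega_1$ countable; by Property~\ref{completeness_of_names} fix a $\Qcal$-name $\dot f$ with $p \forces$``$\dot f : \check\omega \to \check\omega_1$ is onto''. The main claim (with $X = \omega_1$) produces $q \leq p$ and $g$ with $q \forces \dot f = \check g$, so $q$ forces the bounded statement ``$\check g$ maps $\check\omega$ onto $\check\omega_1$'', which is therefore simply true, by Proposition~\ref{bounded_abs} together with Property~\ref{negation_monotone}. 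But then $g$ surjects $\omega$ onto $\omega_1$, which is absurd.

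For $\one \forces \dot\R = \check\R$: since $\check\R\subseteq\dot\R$ is clear, by Properties~\ref{completeness_of_names} and~\ref{ZFC_forced} it is enough to show that no condition forces a given $\Qcal$-name $\dot r$ for an element of $\dot\R$ to lie outside $\check\R$. If $p$ did, I would represent $\dot r$ by a name $\dot f$ for its underlying Cauchy sequence of rationals, apply the main claim (with $X = \Q$) to get $q \leq p$ and $g$ with $q \forces \dot f = \check g$, observe that $g$ is genuinely a Cauchy sequence of rationals by Proposition~\ref{bounded_abs}, and conclude $q \forces \dot r = \check{[g]}$ with $[g] \in \R$ --- contradicting $q \leq p$. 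The only place the $\sigma$-closure hypothesis is essential is the formation of the lower bound $q$, and the subtlest point is that the recursion step, via Property~\ref{decide}, needs $\dot f$ to be forced to take its values in a ground-model set; the remainder is routine bookkeeping with the properties of Section~\ref{formalism:sec} and the absoluteness of bounded formulas.
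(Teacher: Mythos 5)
Your argument is correct and is essentially the paper's own proof: recursively decide $\dot f(\check n)$ along a $\leq$-decreasing sequence of conditions, take a lower bound by $\sigma$-closure, and conclude via Proposition \ref{check_quant}. The paper's proof consists only of your first paragraph --- it leaves the two ``in particular'' clauses to the reader and, when invoking Property \ref{decide}, tacitly assumes (as you make explicit) that the values of $\dot f$ are forced to lie in a ground-model set --- so your caveat about the range and your derivations of the $\aleph_1$ and $\R$ statements are elaborations of, not departures from, its approach.
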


\begin{proof}
Let $p$ and $\dot f$ be given as in the statement of the proposition.
By repeatedly appealing to Property \ref{decide},
recursively construct a sequence of conditions $\Seq{p_n : n < \infty}$ and
values $g(n)$ of a function $g$ defined on $\omega$
such that for all $n$, $p_{n+1} \leq p_n \leq p$ and 
\[
p_n \forces \dot f(\check n) = \check g(\check n).
\]
Since $\Qcal$ is $\sigma$-closed, there is a $q$ in $\Qcal$ such that
$q \leq p_n$ for all $n$.
Thus by Proposition \ref{check_quant}, it follows that
\[
q \forces \forall n\ (\dot f(n) = \check g(n)).
\]
\end{proof}

We will now consider some examples.
The first forcing provides a means for forcing the Continuum Hypothesis over a given
model of set theory, complementing the discussion at the end of Section \ref{ccc:sec}.

\begin{example}
Let $\Qcal$ denote the collection of all countable partial functions from $\omega_1$ to 
$\R$, ordered by extension.
Let $\dot g$ be the $\Qcal$-name for the union of the generic filter.
It is easily verified that $\Qcal$ forces that $\dot g$ is defined on all of $\check \omega_1$
and maps $\check \omega_1$ onto $\check \R$.
Furthermore, if $\Seq{q_n : n < \infty}$ is a descending sequence of conditions, then
$\bigcup_{n=0}^\infty q_n$ is a condition: it is a function and its domain is countable,
being a countable union of countable sets.
Thus $\Qcal$ is $\sigma$-closed and hence forces that
$\dot \R = \check \R$ and $\dot \aleph_1 = \check \aleph_1$.
Hence $\Qcal$ forces that $|\dot \R| = \dot \aleph_1$
(i.e. that the Continuum Hypothesis is true).
\end{example}

\begin{example}
Consider the forcing $([\omega]^{\omega},\subset)$.
This forcing is neither separative nor $\sigma$-closed.
The separative quotient is obtained by identifying sets
$a$ and $b$ which have a finite symmetric difference.
If we define $a \subseteq^* b$ to mean that $a \setminus b$ is finite,
then $\subseteq^*$ induces the order on the separative quotient.
If $\Seq{A_n : n < \infty}$ is a $\subseteq^*$-decreasing sequence of infinite
subsets of $\omega$,
let $n_k$ be the least element of $\bigcap_{i \leq k} A_i$ which is greater than $n_i$ for
each $i < k$.
Notice that $B := \{n_i : i < \infty\}$ is an infinite set and that
$\{n_i : i \geq k\}$ is a subset of $A_k$.
Thus $B \subseteq^* A_k$ for all $k$.
This shows that the separative quotient is $\sigma$-closed.
Notice that, by Ramsey's theorem, if $f:[\omega]^{d} \to 2$, then
\[
\{q \in [\omega]^{\omega} : f \restriction [q]^{d} \textrm{ is constant}\}
\]
is dense in $[\omega]^{\omega}$ (here $[A]^{d}$ denotes the $d$-element subsets of $A$).
Since the separative quotient of $[\omega]^{\omega}$ is $\sigma$-closed, 
forcing with it does not add new subsets of $\omega$.
Thus it forces that $\dot G$ is a \emph{Ramsey ultrafilter on $\omega$}:
if $f:[\omega]^{d} \to 2$ is a coloring of the $d$-element subsets of $\omega$,
there is an $H$ in the ultrafilter
such that $f$ is constant on the $d$-element subsets of $H$.
Kunen has shown, on the other hand, that whenever $\theta > 2^{\aleph_0}$,  $\Rand_\theta$ forces that there does not
exist a Ramsey ultrafilter on $\omega$ \cite{points_betaN}.
(Kunen actually proved this in the special case in which the ground model satisfies the Continuum Hypothesis.
The general case follows by an absoluteness argument --- forcing with the poset $\Qcal$
of the previous example does not change the truth of ``$\Rcal_{\theta}$ forces that there are no Ramsey ultrafilters on $\omega$''.)
\end{example}

We are now in a position to derive another property of Rosenthal compacta.
The proof below is a reproduction of Todorcevic's proof in \cite{Rosenthal_cpt};
the result itself was originally proved by Bourgain \cite{rem_cpt_BC1} using classical methods.

\begin{thm} \label{RC_point_1st_ctbl}
If $K$ is a Rosenthal compactum,
then $K$ contains a dense set of points with a countable neighborhood base.
\end{thm}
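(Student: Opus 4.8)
The plan is to argue by contradiction, reducing via a $\sigma$-closed forcing to a situation in which the Continuum Hypothesis holds and then exploiting the countable tightness of Rosenthal compacta (Theorem \ref{RC_ctbly_tight}). First I would recall that, since $K$ is compact Hausdorff, a point $p\in K$ has a countable neighborhood base if and only if $\{p\}$ is a $G_\delta$ set in $K$; so it suffices to prove that the $G_\delta$ points are dense in $K$. I would then suppose this fails and fix a nonempty basic open $U\subseteq K$ (the intersection of $K$ with a finitely supported box in $\R^X$) containing no $G_\delta$ point of $K$.

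Next I would pass to a generic extension using the $\sigma$-closed forcing $\Qcal$ of countable partial functions from $\omega_1$ to $\R$ considered earlier, which forces the Continuum Hypothesis. The claim is that both $K$ (with its topology) and the statement ``$U$ contains no $G_\delta$ point of $K$'' are preserved by $\Qcal$. Since $\Qcal$ is $\sigma$-closed it adds no new reals, so $X$ stays Polish and every Baire class $1$ function on $X$, being coded by a pointwise convergent sequence of continuous functions, already lies in the ground model. Using Lemma \ref{BC_ext} and the absoluteness arguments behind Theorem \ref{RC_abs}, every accumulation point of the ground-model set $K$ computed in the extension is again Baire class $1$, hence coded by a real, hence in the ground model and already an accumulation point of $K$ there; so $K$ remains closed (hence compact) with the same subspace topology, and $\dot K$ is interpreted as $K$ itself. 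For the second point, a $G_\delta$ witness for a point $p$ may be taken to be an $\omega$-indexed sequence of basic open subsets of $K$, each coded by a finite subset of $X\times\Q$; since, as established above for $\sigma$-closed forcings, no new $\omega$-indexed sequence of ground-model sets is introduced, no such witness is added and ``$p$ is not a $G_\delta$ point of $K$'' is absolute. Thus in the extension we still have a Rosenthal compactum $K$, countably tight by Theorem \ref{RC_ctbly_tight}, with a nonempty open $U$ containing no $G_\delta$ point, and now additionally $2^{\aleph_0}=\aleph_1$.

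The remaining work takes place inside the extension, and this is where I expect the real difficulty to be. Under the Continuum Hypothesis $K$ embeds in $\R^X$ with $|X|=\aleph_1$, so $K$ has a base of size $\aleph_1$, say $\{B_\alpha:\alpha<\omega_1\}$. By transfinite recursion on $\alpha<\omega_1$ one would choose points $x_\alpha\in U$ and open sets $W_\alpha$ of $K$ so that $\{x_\gamma:\gamma\ge\alpha\}\subseteq W_\alpha$ while $x_\gamma\notin\overline{W_\alpha}$ for every $\gamma<\alpha$, with a standard bookkeeping over the base (only countably many points and requirements are relevant at each countable stage). The hypothesis that no point of $U$ is a $G_\delta$ point of $K$ is precisely what prevents the recursion from becoming stuck: the ``approximate type'' built at a countable stage is not realized by a single point, so a new $x_\alpha\in U$ — together with an open set $W_\alpha$ of $K$ separating it from its predecessors — can always be found. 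The resulting $\Seq{x_\alpha:\alpha<\omega_1}$ is then a free sequence of length $\omega_1$ in $K$, contradicting the fact that a countably tight compactum has no free sequence of uncountable length.

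In short, the forcing is used only to force the Continuum Hypothesis while leaving the Rosenthal compactum and the failure of the conclusion intact, and the contradiction is extracted from countable tightness. The delicate step is the transfinite construction above: arranging the separating open sets $W_\alpha$ so that the sequence is free in all of $K$ rather than merely in the subspace $U$, and feeding the ``no $G_\delta$ point'' hypothesis into the bookkeeping so that the recursion survives all $\omega_1$ stages. The verifications that $K$ and the hypothesis on $U$ are preserved by $\Qcal$ are routine once the machinery of Section \ref{abs:sec} is at hand, and the opening reduction is elementary point-set topology.
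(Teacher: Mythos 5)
Your reduction to $G_\delta$ points, your choice of the $\sigma$-closed forcing $\Qcal$, and your preservation analysis (no new reals, hence no new Baire class $1$ functions, so $\check K$ stays closed by Theorem \ref{RC_abs}; no new $\omega$-sequences of ground model sets, so no new $G_\delta$ witnesses) are all sound and consistent with the machinery the paper uses. The gap is exactly in the step you flag as the real difficulty: the transfinite recursion inside the extension. The only properties of $K$ that recursion invokes are compactness, countable tightness (Theorem \ref{RC_ctbly_tight}), weight at most $\aleph_1$ (from CH and $K\subseteq\R^X$), and the hypothesis that $U$ contains no $G_\delta$ point; and these hypotheses do \emph{not} yield an uncountable free sequence. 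Under $\diamondsuit$ --- which is consistent with CH and, as it happens, holds in the extension by $\Qcal$ --- Fedorchuk's space is a crowded, hereditarily separable (hence countably tight) compactum of weight $\aleph_1$ with no nontrivial convergent sequences, and therefore with no $G_\delta$ points at all; being countably tight, it carries no free sequence of length $\omega_1$ either. So no argument using only your stated hypotheses can produce the free sequence: the recursion must get stuck at some countable stage, and indeed nothing in your sketch keeps $U\cap\bigcap_{\beta<\alpha}W_\beta$ nonempty --- ``no $G_\delta$ point'' only constrains countable intersections of open sets that are already known to be neighborhood bases of some candidate point, and the recursion provides no such candidate.

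What is missing is the bound that CH actually buys here: each Baire class $1$ function is coded by a sequence of continuous functions, i.e.\ by a real, so in the extension $|K|\leq 2^{\aleph_0}=\aleph_1$, not merely $w(K)\leq\aleph_1$. With the cardinality bound, countable tightness is irrelevant: the \v{C}ech-Po\v{s}pisil theorem (a compact Hausdorff space in which every point has character $\geq\aleph_1$ has cardinality $\geq 2^{\aleph_1}$) immediately gives a point of $\check K$ with a countable neighborhood base consisting of basic open sets, and $\sigma$-closure then lets one decide the names of that point and of its basic neighborhoods, pulling both back to the ground model; the density statement is obtained by applying this to closed subspaces, with no need to preserve a negation or argue by contradiction. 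This is the paper's route; your forcing set-up would slot into it, but the free-sequence/countable-tightness endgame as you describe it does not go through.
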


\begin{proof}
Observe that it is sufficient to show that every Rosenthal compactum 
contains a point with a countable base.
Recall the following result of \v{C}ech and Po\v{s}pisil \cite{Gdelta_point}:
if $K$ is a compact topological space of cardinality at most $\aleph_1$, then
$K$ contains a point with a countable neighborhood base.
Let $\Qcal$ be the forcing from the previous example.
We have seen that $\Qcal$ forces that $|\dot \R| = \dot \aleph_1$ and hence
that the collection of all real valued Borel functions on a given Polish space
has cardinality $\aleph_1$.
In particular, $\Qcal$ forces that
any Rosenthal compactum has cardinality $\aleph_1$.

Now, let $K$ be a Rosenthal compactum consisting of Baire class 1 functions on some Polish space $X$.
By Theorem \ref{RC_abs}, $\Qcal$ forces that the closure of $\check K$ inside of $\R^X$ still consists
only of Baire class 1 functions.
Since $\Qcal$ is $\sigma$-closed, it follows that $\one$ forces that $\check K$ is closed and hence a compact space of
cardinality $\aleph_1$.
Therefore by the \v{C}ech-Po\v{s}pisil Theorem, there are $\Qcal$-names $\dot g$ and $\dot U_n$ for each $n$ such that
$\one$ forces that $\dot g$ is an element of $\check K$ and that $\{\dot U_n : n < \infty\}$
is a countable neighborhood base for $\dot g$ consisting of basic open sets.
Since $\Qcal$ is $\sigma$-closed, there is a $q$ in $\Qcal$ which decides $\dot g$ to be some $f$ and
$\dot U_n$ to be some $V_n$ for each $n$.
It follows that $\{V_n: n < \infty\}$ is a countable neighborhood base.
\end{proof}

\section{Mathias reals and a theorem of Galvin and Prikry}

\label{GP:sec}

In this section we will give a forcing proof of the Galvin-Prikry Theorem,
which is an infinite dimensional form of Ramsey's Theorem:

\begin{thm} \cite{Galvin-Prikry}
If $\Xcal \subseteq [\omega]^\omega$ is Borel, then there is an $H \in [\omega]^\omega$ such
that either $[H]^\omega \subseteq \Xcal$ or else $[H]^\omega \cap \Xcal = \emptyset$.
\end{thm}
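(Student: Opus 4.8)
The plan is to follow the Mathias-forcing approach: use the forcing $\Mathias$ introduced in Section \ref{cast:sec} to produce a Mathias-generic real, and then argue that the Borel hypothesis on $\Xcal$ transfers down to the ground model via an absoluteness argument built on Proposition \ref{WF_abs} and the analysis of Section \ref{abs:sec}. Concretely, I would first establish a ``pure decision'' or Prikry-type property for $\Mathias$: given any condition $p=(a_p,A_p)$ and any Borel set $\Xcal$, there is $A \subseteq A_p$ infinite such that $(a_p,A)$ decides the statement ``$\dot m \in \check\Xcal$'', where $\dot m$ denotes the $\Mathias$-name for the Mathias generic real $\bigcup\{a_q : q \in \dot G\}$. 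The key tool here is that Borel sets are universally Baire (Proposition \ref{UB_sigma-alg} together with the $\sigma$-algebra closure in Proposition \ref{UB_sigma-alg}/\ref{UB_sigma-alg}), so membership of $\dot m$ in $\check\Xcal$ is controlled by a $\Qcal$-name $\dot{\Xcal}$ and decided densely.

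The second step is a fusion argument internal to $\Mathias$: starting from $(\emptyset,\omega)$, one recursively thins out the second coordinate so as to decide, for every finite initial segment $a$, whether $(a,A)$ forces $\dot m \in \check\Xcal$ or forces its negation, obtaining a single infinite set $H$ with the property that for every finite $a \subseteq H$, the condition $(a,H\setminus(\max a+1))$ decides $\dot m \in \check\Xcal$. This is exactly parallel to Lemma \ref{GNW}'s construction of a set deciding all its finite subsets, and it is the technical heart of the argument; the main obstacle is carrying out the fusion carefully enough that $H$ simultaneously handles all finite stems while keeping the thinning coherent. Having obtained $H$, one applies the Galvin--Nash-Williams theorem (Theorem \ref{GNW}), or more precisely a relativized combinatorial argument, to the family $\Fcal$ of finite subsets $a$ of $H$ for which $(a, H \setminus (\max a+1)) \forces \dot m \in \check\Xcal$: either no such $a$ is a subset of some further $H' \subseteq H$ (the ``homogeneous for $\emptyset$'' side), or every infinite subset of $H'$ has an initial segment in $\Fcal$. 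Shrinking $H$ accordingly, we land in one of two cases: either $(\emptyset,H) \forces \dot m \notin \check\Xcal$, or $(\emptyset,H) \forces \dot m \in \check\Xcal$.

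The final step is the absoluteness reflection. In the first case, $\one \forces_{\Mathias_{(\emptyset,H)}} \dot m \notin \check\Xcal$, and since the Mathias generic over $H$ is forced to be an element of $[\check H]^\omega$, one argues that $[H]^\omega \cap \Xcal = \emptyset$ genuinely holds in the ground model: if some $Y \in [H]^\omega$ were in $\Xcal$, then (because $\Xcal$ is universally Baire, hence its interpretation $\dot\Xcal$ behaves correctly under the filter naturally associated to $Y$) there would be a condition below $(\emptyset,H)$ forcing $\dot m \in \check\Xcal$, contradicting the decision; here one uses Proposition \ref{gen_abs_prop} and the treatment of nice names in Section \ref{abs:sec} to pass between a concrete real $Y$ and a sufficiently generic interpretation. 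The second case is symmetric, giving $[H]^\omega \subseteq \Xcal$. So the overall shape is: force, thin via fusion, apply Galvin--Nash-Williams, and reflect via universal Baireness. The hardest single step is the fusion producing $H$ deciding all stems; the reflection step is conceptually delicate but is precisely what the machinery of Section \ref{abs:sec} was built to handle.
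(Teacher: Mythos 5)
Your outline has the same overall shape as the paper's argument (Mathias forcing, a pure-decision step proved via Theorem \ref{GNW}, and universal Baireness to transfer the decision to every infinite subset of $H$), but two points keep it from closing. First, the statement you propose to decide, ``$\dot m \in \check\Xcal$'', is the wrong one: the Mathias real is forced to differ from every ground-model real (densely many conditions force disagreement with any fixed $Y$), so \emph{every} condition already forces $\dot m \notin \check\Xcal$, and your dichotomy would always land on the ``$[H]^\omega \cap \Xcal = \emptyset$'' side --- absurd for $\Xcal = [\omega]^\omega$. The statement that must be decided is $\dot m \in \dot\Xcal$, where $\dot\Xcal$ is the name furnished by the universal Baireness of $\Xcal$ (Proposition \ref{UB_sigma-alg}); you mention $\dot\Xcal$ in passing, but the fusion and decision steps as written use $\check\Xcal$, and with $\check\Xcal$ they decide nothing useful.

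The more serious gap is in your reflection step. Universal Baireness only guarantees that membership of the interpreted real is read off the filter when that filter is $\Dcal$-generic for a \emph{specific countable family} $\Dcal$ of dense sets attached to the nice name for the Mathias real. For an arbitrary $Y \in [H]^\omega$, the filter $G_Y = \{p \in \Mathias : a_p \subseteq Y \subseteq A_p\}$ need not meet those dense sets, so the phrase ``its interpretation behaves correctly under the filter naturally associated to $Y$'' is precisely what has to be proved; neither Proposition \ref{gen_abs_prop} nor your stem-by-stem fusion supplies it. The paper's proof spends its main combinatorial effort exactly here: Lemma \ref{singleton-generic} and Proposition \ref{countably-generic} show (again via \ref{GNW}-style thinning) that one can pass to an $H$ \emph{all} of whose infinite subsets generate $\Dcal$-generic filters. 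Once that is available, a single application of pure decision (Proposition \ref{pure_decision}) at the empty stem, together with the fact that $(\emptyset,A) \in G_Y$ for every infinite $Y \subseteq H \subseteq A$, gives the dichotomy --- your fusion over all stems is then unnecessary, and without the genericity-transfer lemma it does not substitute for it. Add and prove that lemma, and replace $\check\Xcal$ by $\dot\Xcal$ in the decided statement, and your outline becomes the paper's argument.
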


Recall that Mathias forcing \(\Mathias\) consists of all pairs \(p = (a_p,A_p)\) such that
\(A_p\) is an infinite subset of \(\omega\) and \(a_p\) is a finite initial segment of \(A_p\).
The order on \(\Mathias\) is such that \(q\) extends \(p\) if \(a_p\) is an initial part of \(a_q\) and
\(A_q \subseteq A_p\).
A \emph{Mathias real} is a subset \(X\) of \(\omega\) such that
\[
G_X:=\{p \in \Mathias : a_p \subseteq X \subseteq A_p\}
\]
is a generic filter.
If \(\Dcal\) is a collection of subsets of \(\Mathias\), then we say that \(X\) is \(\Dcal\)-generic
if \(G_X\) is \(\Dcal\)-generic.
If \(D \subseteq \Mathias\), we will say that \(X\) is \(D\)-generic if it is \(\{D\}\)-generic.
We say that \(D \subseteq \Mathias\) is \emph{dense above $n$}
if whenever \(p \in \Mathias\) and \(n \leq \min (a_p)\), \(p\) has an extension in \(D\).

\begin{lem} \label{singleton-generic}
Suppose that \(D \subseteq \Mathias\) is dense above \(n\).
There is a dense set of \(H\) in \(([\omega]^\omega, \subseteq)\)
such that any infinite subset of \(H\) is
\(D\)-generic.
\end{lem}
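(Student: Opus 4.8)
The plan is to re-run the Galvin--Nash-Williams argument of Section~4 with a combinatorial forcing relation adapted to $D$ in place of a fixed family of finite sets. Two preliminary reductions make this clean. First, replacing $D$ by its downward closure $D' := \{p \in \Mathias : \exists q \in D\ (p \leq q)\}$ --- which is still dense above $n$ --- we may assume $D$ is \emph{open}, i.e.\ downward closed in the Mathias order; this is harmless because any $p \in G_X \cap D'$ lies below some $q \in D$, and one checks $q \in G_X$ as well, so $D'$-genericity of $X$ implies $D$-genericity. Second, since $([\omega]^\omega,\subseteq)$-density means we get to shrink $H_0$, we may replace $H_0$ by $H_0 \cap [n,\infty)$; then every infinite $X$ we consider has $\min X \geq n$, and for such $X$ the set $X$ is $D$-generic as soon as some initial segment $c$ of $X$ satisfies $(c,X) \in D$ (as then $(c,X) \in G_X \cap D$). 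So it suffices to produce an infinite $H \subseteq H_0 \cap [n,\infty)$ such that every infinite $X \subseteq H$ has an initial segment $c$ with $(c,X) \in D$.

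For a finite $c$ with $\min c \geq n$ (allowing $c = \emptyset$) and an infinite $A \subseteq \omega$, say that \emph{$A$ accepts $c$} if for every infinite $B \subseteq A$ with $\min B > \max c$ there is an initial segment $b$ of $B$ with $(c \cup b,\ c \cup B) \in D$; that \emph{$A$ rejects $c$} if no infinite subset of $A$ accepts $c$; and that \emph{$A$ decides $c$} if it accepts or rejects $c$. Acceptance is monotone downward in $A$, and for every $A$ and $c$ some infinite $A' \subseteq A$ decides $c$. One then repeats the three lemmas of Section~4 essentially verbatim: (L1) if $A$ rejects $c$, then $\{k \in A : k > \max c,\ A \text{ accepts } c \cup \{k\}\}$ is finite (else that set is an infinite subset of $A$ accepting $c$); (L2) by the usual diagonal recursion there is an infinite $H' \subseteq H_0 \cap [n,\infty)$ which decides $\emptyset$ and all of its finite subsets; (L3) using (L1) and (L2), either $H'$ accepts $\emptyset$, or there is an infinite $H'' \subseteq H'$ which rejects $\emptyset$ and all of its finite subsets.

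If $H'$ accepts $\emptyset$, take $H := H'$: for infinite $X \subseteq H'$, applying the definition of acceptance with $B = X$ yields an initial segment $b$ of $X$ with $(b,X) \in D$, as desired. The crux is to rule out the other alternative of (L3) using density above $n$ --- and this is the one step that is not a routine transcription of Section~4, because it is where the ``second coordinate'' of Mathias conditions enters. Suppose $H''$ rejects $\emptyset$ and all of its finite subsets. Let $h_0 := \min H''$; the condition $(\{h_0\},H'')$ has stem with minimum $\geq n$, so density gives $(c,C) \leq (\{h_0\},H'')$ with $(c,C) \in D$, so $c \subseteq H''$ is finite and $C \subseteq H''$ is infinite with $c$ an initial part of $C$. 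For every infinite $B \subseteq C \cap (\max c,\infty)$ we have $(c,\ c\cup B) \leq (c,C)$, hence $(c, c\cup B) \in D$ by openness of $D$; taking $b = \emptyset$ shows that $C \cap (\max c,\infty)$ --- an infinite subset of $H''$ --- accepts $c$. Thus $H''$ does not reject $c$ (and the same computation with $c = \emptyset$ handles that case), contradicting the hypothesis on $H''$. Hence only the first alternative occurs and the proof is complete; the main obstacle, as indicated, is arranging the accept/reject dichotomy so that it interacts correctly with the set-part of Mathias conditions, which is exactly what forces the reduction to open $D$ and the appeal to density to close the case split.
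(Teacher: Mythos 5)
Your proof is correct, but it takes a genuinely different route from the paper's. The paper first runs a fusion construction, building $H_0 \supseteq H_1 \supseteq \cdots$ so that for each stem $x \subseteq \{n_i : i<k\}$ either some $p \in D$ with $a_p = x$ has $H_k \subseteq A_p$ or every such $p$ has $A_p \cap H_k$ finite; it then applies Theorem \ref{GNW} as a black box to the family $\Fcal$ of stems of conditions in $D$ with side set inside $B := \{n_k : k<\infty\}$, uses density to rule out the ``no subset in $\Fcal$'' horn, and finally uses the fusion dichotomy to upgrade ``initial segment in $\Fcal$'' to an actual $q \in D$ with $a_q = x$ and $X \subseteq A_q$. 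You instead bypass Theorem \ref{GNW} and the fusion entirely: passing to the downward closure of $D$ (legitimate, since $G_X$ is upward closed) lets a single condition $(c,C) \le (\{h_0\},H'')$ in $D$ witness acceptance of $c$ with $b = \emptyset$, which is exactly the role the fusion dichotomy plays in the paper, and it also resolves the ``exactness'' issue of needing $(b,X)$ itself in $D$ in the accepting case. The price is that your acceptance predicate mentions the whole infinite set $c \cup B$, so Theorem \ref{GNW} cannot be quoted and the accept/reject/decide machinery of Section~4 must be re-derived for Mathias conditions --- which you do, and the verifications (L1)--(L3), the diagonal recursion, and the density-plus-openness contradiction in the rejecting case all go through as you describe. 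In short: the paper buys reuse of Theorem \ref{GNW} at the cost of the $H_k$ fusion; you buy a self-contained, fusion-free argument at the cost of repeating the combinatorial forcing, with openness of $D$ as the structural substitute.
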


\begin{proof}
Let \(D\) and \(n\) be given as in the statement of the lemma and let \(A \subseteq \omega\) be arbitrary
with \(n < \min (A)\).
Construct a sequence of infinite subsets \(H_k \subseteq A\) for each \(k\) such that,
setting \(n_k := \min (H_k)\):
\begin{enumerate}
\popcounter

\item \(H_0 := A\) and \(H_{k+1} \subseteq H_k\);

\item \(n_k < n_{k+1}\);

\item for each \(x \subseteq \{n_i : i < k\}\) either there is a \(p \in D\) such that
\(a_p = x\) and \(H_k \subseteq A_p\) or else whenever \(p \in D\) with \(a_p = x\),
\(A_p \cap H_k\) is finite.

\pushcounter
\end{enumerate}
Define \(B :=\{n_k : k < \infty\}\) and set 
\[
\Fcal :=\{x \in [B]^{<\omega} : \exists p \in D ((a_p = x) \mand (A_p \subseteq B ))\}.
\]
By Theorem \ref{GNW}, there is an infinite \(H \subseteq B\) such that either \(H\) has
no subset in \(\Fcal\) or else every infinite subset of \(H\) has an initial segment in \(\Fcal\).
Since \((\emptyset,H)\) is in \(\Mathias\), it has an extension \(p\) in \(D\).
Since \(a_p \subseteq A_p \subseteq H \subseteq B\), \(a_p\) is a subset of \(H\) in \(\Fcal\).
Thus every infinite subset of \(H\) has an initial segment in \(\Fcal\) and \(n \leq \min (H)\).

Now let \(X\) be an infinite subset of \(H\) and \(x\) is an initial part of \(X\) in \(\Fcal\).
Let \(k\) be such that \(x \subseteq \{n_i : i < k\}\) and let \(p \in D\) be such that 
\(x= a_p \subseteq A_p \subseteq B\).
Observe that in particular \(A_p \cap H_k\) is infinite. 
Thus by our construction, \(H_k \subseteq A_q\) for some \(q \in D\) such that
\(a_q = x\).
Now we have that \(a_q \subseteq X \subseteq A_q\) and therefore that
\(X\) is \(D\)-generic.
\end{proof}

\begin{prop} \label{countably-generic}
Suppose that \(\Dcal\) is a countable collection of dense subsets of \(\Mathias\).
For every \(x \in [\omega]^{<\omega}\) there is a dense set of \(H\) in
\([\omega]^\omega\) such that if \(X \subseteq H\) is infinite then \(x \cup X\) is
\(\Dcal\)-generic.
\end{prop}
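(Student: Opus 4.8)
The plan is to reduce, by a shift, to the case $x = \emptyset$ and then to diagonalize over $\Dcal$ using a strengthening of Lemma~\ref{singleton-generic}. Set $\bar n := \max x + 1$ (or $0$ if $x = \emptyset$), and for a dense $D \subseteq \Mathias$ put
\[
(D)_x := \{(b,B) \in \Mathias : B \subseteq [\bar n,\infty) \mand (x \cup b,\, x \cup B) \in D\}.
\]
A routine computation shows $(D)_x$ is dense above $\bar n$ — given $(b,B)$ with $\bar n \leq \min b$, replace $B$ by $B \cap [\bar n,\infty)$, apply density of $D$ to $(x\cup b,\, x\cup B)$, and strip $x$ off the resulting condition — and that if $X$ is an infinite subset of $[\bar n,\infty)$ and $G_X$ meets $(D)_x$, then $G_{x\cup X}$ meets $D$ (adjoin $x$ to a witnessing condition, using $\max x < \bar n \leq \min X$). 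So, writing $E_j := (D_j)_x$, it suffices to produce a dense set of infinite $H \subseteq [\bar n,\infty)$ such that every infinite $X \subseteq H$ is $E_j$-generic for every $j$.

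The crux is the following strengthening of Lemma~\ref{singleton-generic}: \emph{if $E \subseteq \Mathias$ is dense above $n$ and $m \geq n$, then there is a dense set of $H \in [\omega]^\omega$ with $\min H \geq m$ such that $s \cup X$ is $E$-generic whenever $s \subseteq [n,m)$ and $X \subseteq H$ is infinite.} This is proved by the very construction used for Lemma~\ref{singleton-generic}: build $\omega \supseteq H_0 \supseteq H_1 \supseteq \cdots$ with $n_k := \min H_k$ strictly increasing, arranging at each stage $k$ the dichotomy of clause (3) there \emph{simultaneously} for all stems $s \cup t$ with $s \subseteq [n,m)$ and $t \subseteq \{n_i : i < k\}$ — only finitely many, since $[n,m)$ is a fixed finite interval — so the decisions are stable; then, for each fixed $s$, apply Theorem~\ref{GNW} to the family $\{t \in [B]^{<\omega} : \exists p \in E\ (a_p = s \cup t \mand A_p \subseteq s \cup B)\}$, where $B := \{n_k : k < \infty\}$, the undesirable alternative being ruled out by density of $E$ above $n$ applied to $(s,\, s \cup H)$. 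As in Lemma~\ref{singleton-generic}, the essential point is that the stage at which the dichotomy for a stem $s\cup t$ is resolved depends only on $t$, so the reservoir $A_q$ of the witnessing condition $q \in E$ contains $\{n_i : i \geq k\}$, hence all of $X \setminus t$ for any relevant $X$.

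Now, given $A \in [\omega]^\omega$, assume $A \subseteq [\bar n,\infty)$. Recursively build $A = A_0 \supseteq A_1 \supseteq \cdots$, letting $m_l := \min A_l$: given $A_l$, apply the strengthened lemma to $E_l$ with parameter $m := m_l + 1$ inside $A_l$, obtaining $A_{l+1} \subseteq A_l$ with $\min A_{l+1} > m_l$ and such that $s \cup Y$ is $E_l$-generic whenever $s \subseteq \{m_0,\dots,m_l\}$ and $Y \subseteq A_{l+1}$ is infinite. Set $H := \{m_l : l < \infty\} \subseteq A$. For infinite $X \subseteq H$ and any $j$, let $s := X \cap \{m_0,\dots,m_j\}$ and $Y := X \setminus s$; since $m_l \in A_l \subseteq A_{j+1}$ for $l > j$, $Y$ is an infinite subset of $A_{j+1}$, so $X = s \cup Y$ is $E_j$-generic, whence $x \cup X$ is $D_j$-generic. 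As $j$ was arbitrary, $x \cup X$ is $\Dcal$-generic.

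The main difficulty is the strengthened lemma; without it the obvious diagonalization fails, because a diagonal set $H = \{m_l\}$ lies inside no single $A_{j+1}$, and the handful of ``small'' elements of an $X \subseteq H$ obstruct the genericity of $x \cup X$. Absorbing an arbitrary finite prefix $s$ into the stem is precisely what removes this obstruction.
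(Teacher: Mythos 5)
Your argument is correct and essentially coincides with the paper's proof: the paper likewise absorbs the stem by passing to shifted dense sets $D_{k,y}$ (your $(D)_x$ together with the finite prefixes $s$) and runs the same fusion $H_0 \supseteq H_1 \supseteq \cdots$ with diagonal set $\{n_k : k<\infty\}$, splitting an arbitrary infinite $X$ into a finite prefix absorbed into the stem and a tail lying inside a single $H_k$. The only cosmetic difference is that you re-prove a stem-strengthened form of Lemma \ref{singleton-generic} directly from Theorem \ref{GNW}, whereas the paper simply invokes Lemma \ref{singleton-generic} for each of the finitely many shifted dense sets arising at each stage.
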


\begin{proof}
Let \(\Dcal\) and \(x\) be given as in the statement of the proposition.
Fix an enumeration \(\{D_k : k < \infty\}\) of \(\Dcal\) and let \(A \in [\omega]^\omega\) be arbitrary with
\(\max(x) < \min (A)\).
If \(y\) is a finite set and \(k < \infty\), define
\[
D_{k,y} := \{p \in \Mathias : (\max (y) < \min (a_p)) \mand ((y \cup a_p, y \cup A_p) \in D_k) \}.
\]
Observe that \(D_{k,y}\) is dense above \(\max (y) + 1\) and if \(X \subseteq \omega\) with
\(\max (y) < \min (X)\), then \(y \cup X\) is \(D_k\)-generic if \(X\) is \(D_{k,y}\)-generic.

Using Lemma \ref{singleton-generic}, construct infinite sets \(\{H_k : k < \infty\}\) so that:
\begin{enumerate}
\popcounter

\item \(H_0 :=A\) and \(H_{k+1} \subseteq H_k\);

\item setting \(n_k := \min (H_k)\), we have \(n_k < n_{k+1}\);

\item any infinite subset of \(H_k\) is \(D_{j,y}\)-generic whenever \(j < k\) and 
\(x \subseteq y \subseteq x \cup \{n_i : i < k\}\).

\pushcounter
\end{enumerate}
Define \(H:=\{n_k : k < \infty\}\) and suppose that \(X\) is an infinite subset of \(H\).
Let \(k\) be given and set \(y:= x \cup (X \cap \{n_i : i < k\})\).
Since \(X \cap H_k = X \setminus x\) is \(D_{k,y}\)-generic, 
\(x \cup X\) is \(D_k\)-generic.
Thus \(H\) satisfies the conclusion of the proposition.
\end{proof}

If \(p,q \in \Mathias\), then we say that \(q\) is a \emph{pure extension} of \(p\) if
\(q \leq p\) and \(a_p = a_q\).
The following proposition is central to the analysis of \(\Mathias\) and related posets.

\begin{prop} \label{pure_decision}
If \(\phi\) is a formula in the forcing language and \(p \in \Mathias\),
\(p\) has a pure extension which decides \(\phi\).
\end{prop}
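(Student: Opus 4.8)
The statement is the \emph{Prikry property} of Mathias forcing, and the plan is to prove it by combining a fusion (diagonalization) argument with the Galvin--Nash-Williams partition theorem (Theorem~\ref{GNW}), mirroring the accepts/rejects analysis used in the proof of that theorem. Write $p = (s,A)$ with $s = a_p$ and $A = A_p$. For a finite set $b$ with $\max(s) < \min(b)$ and an infinite $C$ with $\max(b) < \min(C)$, put $C/b := \{n \in C : n > \max(b)\}$ (and $C/\emptyset := C$), and say that $C$ \emph{accepts} $b$ if $(s \cup b, C) \forces \phi$. Since forcing is preserved under strengthening conditions, if $C$ accepts $b$ then so does every infinite $C' \subseteq C/b$.

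First I would run a fusion to produce a pure extension $(s, A_1)$ of $p$ with the following property: for every finite $b \subseteq A_1$ with $\max(s) < \min(b)$, and also for $b = \emptyset$, either (i) $(s \cup b, A_1/b) \forces \phi$, or (ii) no infinite $C \subseteq A_1/b$ has $(s\cup b, C) \forces \phi$. This is built exactly like the ``decides all of its finite subsets'' lemma of the Galvin--Nash-Williams section: enumerate $A_1 = \{n_0 < n_1 < \cdots\}$ while shrinking a decreasing chain $A \supseteq C_0 \supseteq C_1 \supseteq \cdots$ with $\min(C_k) = n_k$, arranging at stage $k$ one of (i), (ii) for each of the finitely many relevant $b \subseteq \{n_0,\dots,n_k\}$ --- for such a $b$, if some infinite subset of the current set forces $\phi$ below the stem $s\cup b$, shrink to it so that (i) holds, and otherwise (ii) already holds and persists under further shrinking. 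If alternative (i) holds for $b = \emptyset$, then $(s, A_1)$ is a pure extension of $p$ forcing $\phi$ and we are done, so assume (ii) holds for $\emptyset$.

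Now comes the key step. Let $\Fcal := \{ b \in [A_1]^{<\omega} \setminus \{\emptyset\} : (s \cup b, A_1/b) \forces \phi \}$, and apply Theorem~\ref{GNW} to $\Fcal$ (relativized to $A_1$ via its increasing enumeration; the theorem and its proof go through verbatim with $A_1$ in place of $\omega$). This yields an infinite $H \subseteq A_1$ such that either (a) no element of $\Fcal$ is a subset of $H$, or (b) every infinite subset of $H$ has an initial segment in $\Fcal$. I claim the pure extension $q^* := (s, H)$ of $p$ decides $\phi$. In case (a), it forces $\neg\phi$: if some $q \leq q^*$ forced $\phi$, write $q = (s \cup c, C)$ with $c = a_q \setminus s \subseteq H$ and $C = A_q \subseteq H$ (so $\max c < \min C$); then $C$ witnesses that alternative (ii) fails for $c$, so (i) holds for $c$, giving $c \in \Fcal$ with $c \subseteq H$ (if $c = \emptyset$ this contradicts (ii) for $\emptyset$ directly), contradicting (a); hence no $q \leq q^*$ forces $\phi$, i.e. $q^* \forces \neg\phi$ by Property~\ref{negation_monotone}. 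In case (b), $q^*$ forces $\phi$: given any $q \leq q^*$, written as $(s\cup c, C)$ as above, the infinite set $c \cup C \subseteq H$ has an initial segment $b \in \Fcal$; if $b \subseteq c$ then $q \leq (s\cup b, A_1/b) \forces \phi$, and if $b = c \cup b'$ with $\emptyset \neq b' \subseteq C$ an initial segment of $C$ then the condition $(s\cup b, C/b')$ is a common strengthening of $q$ and of $(s \cup b, A_1/b) \forces \phi$ --- so in either case $q$ has an extension forcing $\phi$. Consequently no $q \leq q^*$ can force $\neg\phi$ (it would have an extension forcing both $\phi$ and $\neg\phi$, contradicting the consistency clause of Property~\ref{ZFC_forced}), so $q^* \forces \neg\neg\phi$ and hence $q^* \forces \phi$ (using that $\neg\neg\phi \to \phi$ is among the sentences forced by $q^*$, which are closed under modus ponens). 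Thus in all cases $p$ has a pure extension deciding $\phi$.

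The main obstacle, and the only place where genuine care is needed, is exactly this last upgrade: passing from ``no \emph{pure} extension of a condition forces $\phi$'' to ``the condition forces $\neg\phi$,'' and dually certifying an actual forcing of $\phi$ in case (b). An arbitrary extension of a Mathias condition strengthens both the stem and the infinite side, so one must carefully track how an initial segment $b$ supplied by Theorem~\ref{GNW} sits relative to the stem $c$ of a given extension, which is what forces the split into the cases $b \subseteq c$ and $b \supsetneq c$ and the use of $b'$ as an initial segment of the side $C$. Everything else is routine bookkeeping: the fusion is a verbatim adaptation of the lemmas preceding Theorem~\ref{GNW}, and the relativization of that theorem from $\omega$ to an arbitrary infinite set is immediate.
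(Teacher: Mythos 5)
Your proof is correct, but it takes a genuinely different route from the paper's. The paper gets pure decision almost for free from the machinery it has already built around Mathias reals: it takes $D$ to be the (dense) set of conditions deciding $\phi$, invokes Proposition \ref{countably-generic} to find $B \subseteq A_p$ such that every infinite $X \subseteq B$ yields a $D$-generic real $a_p \cup X$, and then makes a single application of Theorem \ref{GNW} to the family of stems of conditions forcing $\phi$ with side contained in $B$; the homogeneous $H$ then gives $(a_p,H)$ forcing $\neg\phi$ or $\phi$ according to the two alternatives. You bypass Lemma \ref{singleton-generic} and Proposition \ref{countably-generic} entirely, replacing them with a bespoke fusion producing $A_1$ with the accept/reject dichotomy for every stem extension, and then apply Theorem \ref{GNW} once to your $\Fcal$; this is the classical self-contained pure-decision argument for Mathias forcing. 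The paper's route buys brevity (its genericity propositions are needed anyway for the theorem that follows); yours buys independence from those propositions and, usefully, an explicit verification of the final upgrade --- from ``no (respectively every) extension of $(s,H)$ is compatible with a condition forcing $\phi$'' to $(s,H)\forces\neg\phi$ (respectively $(s,H)\forces\phi$) --- which the paper leaves terse. Two minor points: in your fusion, alternative (i) in the literal form $(s\cup b, A_1/b)\forces\phi$ only comes out right if each $b$ is handled at the stage at which $\max(b)$ is the largest element chosen so far, so that the tail $A_1/b$ lies inside the witnessing set; this is exactly how the lemma preceding Theorem \ref{GNW} is organized, so it is a matter of phrasing rather than substance. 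Also, your conditions $(s\cup b, C)$ keep the side disjoint from the stem, whereas the paper's convention makes the stem an initial part of the side; the two presentations are isomorphic and nothing in your argument is affected.
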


\begin{proof}
Let \(p\) and \(\phi\) be given as in the statement of the proposition.
Define \(D\) to be the set of all conditions in \(\Mathias\) which decide \(\phi\), noting that \(D\) is
dense.
By Proposition \ref{countably-generic}, there is an infinite \(B \subseteq A_p\) such that
if \(X \subseteq B\) is infinite, then
\(a_p \cup X\) is \(D\)-generic.
Set 
\[
\Fcal := \{x \in [B]^{<\omega} : \exists p \in \Mathias  ((p \forces \phi) \mand (a_p = x) \mand (A_p \subseteq B))\}.
\]
By Theorem \ref{GNW} there is an infinite \(H \subseteq B\) such that either \(H\) has no subset in
\(\Fcal\) or else every infinite subset of \(H\) has an initial part in \(\Fcal\).
If the first conclusion is true,
then \((a_p,H)\) forces \(\neg \phi\).
If the second conclusion is true, then \((a_p,H)\) forces \(\phi\).
\end{proof}

Since every Borel set is universally Baire by Proposition \ref{UB_sigma-alg},
the next theorem implies the Galvin-Prikry Theorem \cite{Galvin-Prikry}.

\begin{thm}
If \(\Xcal \subseteq [\omega]^\omega\) is universally Baire,
then there is an \(H \in [\omega]^\omega\) such that either
\([H]^\omega \subseteq \Xcal\) or else \([H]^\omega \cap \Xcal = \emptyset\).
\end{thm}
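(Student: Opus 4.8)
The plan is to push the Galvin--Prikry dichotomy through Mathias forcing $\Mathias$: the generic Mathias real is forced to lie in $[\omega]^\omega$, and the pure decision property (Proposition~\ref{pure_decision}) lets a single condition of the form $(\emptyset,H)$ decide whether this real falls into the canonical reinterpretation of $\Xcal$; universal Baireness then transports this homogeneous behaviour back to arbitrary infinite subsets of $H$ in the ground model.

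In detail, I would first fix, via the nice-name lemma of Section~\ref{abs:sec}, a nice $\Mathias$-name $\dot X$ for an element of $[\omega]^\omega$ (regarded as a Polish space with a fixed complete metric) whose interpretation by any filter $G$ meeting each of the dense sets $\{p\in\Mathias : |a_p|\geq n\}$ is the Mathias real $\bigcup\{a_p : p\in G\}$. Since $\Xcal$ is universally Baire, let $\dot\Xcal$ be the $\Mathias$-name and let $\Dcal$ be the countable collection of dense subsets of $\Mathias$ furnished by the definition of universally Baire for the particular name $\dot X$; recall that for every $\Dcal$-generic filter $G$ one has $\Int{\dot X}{G}\in\Xcal$ if and only if some $q\in G$ forces ``$\dot X\in\dot\Xcal$''. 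Applying Proposition~\ref{pure_decision} to the formula ``$\dot X\in\dot\Xcal$'' and the condition $\one=(\emptyset,\omega)$ yields a pure extension $(\emptyset,H_0)$ deciding it. Using Proposition~\ref{countably-generic} with $x=\emptyset$ and the collection $\Dcal$, I would pass to an infinite $H\subseteq H_0$ with the property that $G_Y:=\{p\in\Mathias : a_p\subseteq Y\subseteq A_p\}$ is a $\Dcal$-generic filter for every infinite $Y\subseteq H$; note that $(\emptyset,H)\leq(\emptyset,H_0)$ still decides ``$\dot X\in\dot\Xcal$'', and on the same side.

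It then remains to check the two cases. Suppose first that $(\emptyset,H)\forces \dot X\in\dot\Xcal$. Given $Y\in[H]^\omega$, the set $G_Y$ is a filter (the finite initial segments of $Y$ form a chain), it contains $(\emptyset,H)$ since $Y\subseteq H$, it is $\Dcal$-generic, and $\Int{\dot X}{G_Y}=\bigcup\{a_p : p\in G_Y\}=Y$. By the defining property of universally Baire sets, $Y\in\Xcal$ if and only if some $q\in G_Y$ forces ``$\dot X\in\dot\Xcal$'', and $q=(\emptyset,H)$ is such a condition; hence $[H]^\omega\subseteq\Xcal$. If instead $(\emptyset,H)\forces\neg(\dot X\in\dot\Xcal)$, then by Property~\ref{negation_monotone} together with downward directedness of the filter $G_Y$ no $q\in G_Y$ can force ``$\dot X\in\dot\Xcal$'', so $Y\notin\Xcal$ and $[H]^\omega\cap\Xcal=\emptyset$.

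The step I expect to require the most care is the bookkeeping that makes these filters $G_Y$ do double duty: the name $\dot X$ must genuinely be a \emph{nice} name for an element of the complete metric space $[\omega]^\omega$ so that the universal Baire definition applies to it, while at the same time the non-fully-generic filters $G_Y$ arising from arbitrary infinite $Y\subseteq H$ must both interpret $\dot X$ as $Y$ and be $\Dcal$-generic. One also has to reconcile the notion of a $\Dcal$-generic \emph{filter} from Section~\ref{abs:sec} with the notion of a $\Dcal$-generic \emph{real} from Section~\ref{GP:sec}, but this is a matter of unwinding definitions.
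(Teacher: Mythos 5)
Your proposal is correct and follows essentially the same route as the paper's proof: obtain the name $\dot X$ for the Mathias real and the countable family $\Dcal$ of dense sets from universal Baireness, use Proposition~\ref{pure_decision} to find a pure condition $(\emptyset,A)$ deciding ``$\dot X\in\dot\Xcal$,'' then use Proposition~\ref{countably-generic} to shrink to an $H$ all of whose infinite subsets are $\Dcal$-generic, and unwind the definition on the filters $G_Y$. Your writeup merely spells out the final case analysis and the nice-name bookkeeping that the paper leaves implicit.
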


\begin{proof}
Since \(\Xcal\) is universally Baire, there is an \(\Mathias\)-name
\(\dot \Xcal\), countably many dense sets \(\Dcal\), and a name \(\dot X\) for
the Mathias real such that if \(G\) is a \(\Dcal\)-generic filter, then
\(\dot X(G) \in \Xcal\) if and only if there is a \(p \in G\) such that
\(p \forces \dot X \in \dot \Xcal\) if and only if there is no \(p \in G\) such that
\(p \forces \dot X \not \in \dot \Xcal\).
By Proposition \ref{pure_decision}, there is condition of the form \((\emptyset,A)\) which decides
\(\dot X \in \dot \Xcal\).
By Proposition \ref{countably-generic},
there is an \(H \in [A]^\omega\) such that every infinite subset of \(H\) is \(\Dcal\)-generic.
It follows that \(H\) satisfies the conclusion of the theorem.
\end{proof}

\section{When compacta have dense metrizable subspaces*}

Suppose that $K$ is a compact Hausdorff space.
In this section we will reformulate the question of when $K$
contains a dense metrizable subspace in terms of the language of forcing.
Recall that every compact Hausdorff space is homeomorphic to a closed subspace of $[0,1]^I$ for
some index set $I$.
In this section, when we reinterpret $K$ in a generic extension, we will take $\dot K$ to be the name
for the closure of $\check K$ in $[0,1]^{\check I}$.

Recall that a \emph{regular pair} in $K$ is a pair $(F,G)$ such that $F$ and $G$ are disjoint
closed $G_\delta$ subsets of $K$.
If $\Xi$ is an ordered set and $\Seq{(F_\xi,G_\xi) : \xi \in \Xi}$ is a sequence of
regular pairs, then we say that $\Seq{(F_\xi,G_\xi) : \xi \in \Xi}$ is a \emph{free sequence}
if whenever $A,B \subseteq \Xi$ are finite and satisfy $\max (A) < \min (B)$, it follows that
\[
\bigcap_{\xi \in A} G_\xi \cap \bigcap_{\xi \in B} F_\xi \ne \emptyset.
\]
Recall also that a collection $\Bcal$ of nonempty open subsets of
$K$ is a \emph{$\pi$-base} if every nonempty open set in $K$ contains an element of $\Bcal$.
 
We note the following result of Todorcevic.
\begin{thm} \cite{free_sequences} \label{pi_base}
If $K$ is any compact Hausdorff space, there is a sequence $\Seq{(F_\xi,G_\xi) : \xi \in \Pi}$
of regular pairs in $K$ such that
$\{\mathrm{int} (G_\xi) : \xi \in \Pi\}$ forms a $\pi$-base for $K$ of minimum cardinality and such that
whenever $\Xi \subseteq \Pi$ satisfies that $\{G_\xi : \xi \in \Xi\}$ has the finite intersection
property, $\Seq{(F_\xi,G_\xi) : \xi \in \Xi}$ is a free sequence.
\end{thm}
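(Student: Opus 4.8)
The plan is to build the sequence $\Seq{(F_\xi, G_\xi) : \xi \in \Pi}$ by a transfinite recursion of length $\Pi$, where $\Pi$ is the $\pi$-weight of $K$ (the minimum cardinality of a $\pi$-base), maintaining two invariants at each stage: that the open sets $\mathrm{int}(G_\eta)$ chosen so far, together with a fixed ``reservoir'' of candidate open sets, still suffice to refine every nonempty open set of $K$; and that the partial sequence built so far is already a free sequence. First I would fix, by compactness and the definition of $\pi$-weight, an indexing $\Seq{U_\xi : \xi < \Pi}$ of a $\pi$-base of minimum cardinality (or rather a cofinal supply of open sets from which the $\pi$-base will be extracted). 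At stage $\xi$, having built $\Seq{(F_\eta, G_\eta) : \eta < \xi}$, I would look for a nonempty open set $V$ not yet refined by any $\mathrm{int}(G_\eta)$ with $\eta < \xi$; inside $V$ I would choose two disjoint closed $G_\delta$ sets $F_\xi, G_\xi$ with $\emptyset \ne \mathrm{int}(G_\xi) \subseteq V$ — this uses that in a compact Hausdorff space every nonempty open set contains a nonempty closed $G_\delta$ set with nonempty interior and, shrinking, a disjoint pair of such sets (normality plus the fact that points/closed sets can be separated by closed $G_\delta$'s when we have enough regularity; in $[0,1]^I$ one uses coordinate-interval boxes, which are closed $G_\delta$).

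The heart of the argument — and the step I expect to be the main obstacle — is maintaining the free-sequence property while also eventually exhausting a $\pi$-base of \emph{minimum} cardinality. The freeness condition demands that for finite $A, B \subseteq \Xi$ with $\max(A) < \min(B)$ (restricted to those $\Xi$ on which $\{G_\xi : \xi \in \Xi\}$ has the finite intersection property) the intersection $\bigcap_{\xi \in A} G_\xi \cap \bigcap_{\xi \in B} F_\xi$ is nonempty. The natural way to force this is a ``pressing-down'' / elementary-submodel bookkeeping argument: one fixes a sufficiently large structure, takes a continuous increasing chain of elementary submodels $\Seq{M_\xi : \xi < \Pi}$ of cardinality $<\Pi$, and at stage $\xi$ chooses $(F_\xi, G_\xi)$ reflecting through $M_\xi$ so that any finite intersection of earlier $G_\eta$'s that lies \emph{inside} $M_\xi$ and is nonempty remains nonempty after intersecting with $F_\xi$. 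Because $F_\xi$ and $G_\xi$ are carved out of an open set $V$ that is ``generic'' over $M_\xi$ (not decided by $M_\xi$), one can keep $V$ — and hence $G_\xi$, $F_\xi$ — small enough to avoid destroying finite intersection patterns recorded in $M_\xi$, yet large enough ($\mathrm{int}(G_\xi) \ne \emptyset$) to contribute to the $\pi$-base. The delicate point is that ``$\{G_\xi : \xi \in \Xi\}$ has the finite intersection property'' is a hypothesis on $\Xi$, not something we control globally, so freeness must be verified for an \emph{arbitrary} such $\Xi$; here the elementarity of the $M_\xi$'s does the work, since any finite $A \cup B$ lands in some $M_\xi$ with $\xi$ above $\max(A)$, and the relevant intersection $\bigcap_{\eta \in A} G_\eta$ is then an element of $M_\xi$, so its nonemptiness (guaranteed by the f.i.p. assumption) is ``seen'' by $M_\xi$ and preserved at all later stages $\ge \min(B)$.

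After the recursion, I would set $\Pi$ to be its actual length and check the two conclusions. For the $\pi$-base claim: by the invariant, the recursion only halts when every nonempty open set has been refined by some $\mathrm{int}(G_\xi)$, so $\{\mathrm{int}(G_\xi) : \xi \in \Pi\}$ is a $\pi$-base; minimality of its cardinality follows because we only introduced a new pair when forced to refine a genuinely new open set, so $|\Pi|$ does not exceed the $\pi$-weight (and trivially is at least the $\pi$-weight). For freeness of $\Seq{(F_\xi,G_\xi) : \xi \in \Xi}$ whenever $\{G_\xi : \xi \in \Xi\}$ has the finite intersection property: given finite $A, B \subseteq \Xi$ with $\max(A) < \min(B)$, one applies the elementary-submodel preservation described above to conclude $\bigcap_{\xi \in A} G_\xi \cap \bigcap_{\xi \in B} F_\xi \ne \emptyset$. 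I would present the submodel chain and the local genericity of the chosen open sets carefully, as that is where all the real content sits; the point-set topology (existence of disjoint closed $G_\delta$ pairs with nonempty interior inside a given open set of $[0,1]^I$) is routine and I would dispatch it quickly.
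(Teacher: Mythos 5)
First, a caveat: the paper does not actually prove Theorem \ref{pi_base} --- it is quoted from \cite{free_sequences} --- so your proposal has to stand on its own, and as written it has a genuine gap at precisely the point you yourself identify as the heart of the matter. Freeness demands a \emph{single} point of $\bigcap_{\xi\in A}G_\xi$ lying simultaneously in $F_\beta$ for \emph{every} $\beta$ in an arbitrary finite set $B$ of later indices. Your invariant only asks that each newly chosen $F_\xi$ meet the earlier finite intersections of $G$'s recorded in $M_\xi$; that is the case $|B|=1$, and the phrase ``preserved at all later stages $\ge \min(B)$'' silently conflates ``for each later $\beta$, $\bigcap_{\eta\in A}G_\eta\cap F_\beta\ne\emptyset$'' with ``$\bigcap_{\eta\in A}G_\eta\cap\bigcap_{\beta\in B}F_\beta\ne\emptyset$''. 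Elementarity supplies none of the missing simultaneity: ``reflecting through $M_\xi$'' and ``$V$ generic over $M_\xi$'' are not defined operations, and no property of the submodels is actually used. A correct recursion must carry an explicit invariant about the mixed sets $\bigcap_{\eta\in A}G_\eta\cap\bigcap_{\eta\in B'}F_\eta$ for finite $A<B'$ among the indices already built (with the appropriate finite-intersection side conditions), and the new pair must be chosen so as to preserve all of them; nothing of this sort appears in the proposal.

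Second, the invariant you do state is not always satisfiable, and your choice of $F_\xi$ points the wrong way. If some earlier nonempty intersection $\bigcap_{\eta\in A}G_\eta$ is contained in the new $G_\xi$ --- which the $\pi$-base duty $\emptyset\ne\mathrm{int}(G_\xi)\subseteq V$ can force when $V$ is small and meets that intersection --- then $F_\xi$ cannot meet it at all, since $F_\xi\cap G_\xi=\emptyset$; note that in this situation the family $\{G_\eta:\eta\in A\cup\{\xi\}\}$ \emph{does} have the finite intersection property, so the theorem's conclusion is genuinely at stake. Thus the f.i.p.\ hypothesis must be exploited \emph{while choosing} $G_\xi$ and $F_\xi$ (to keep $G_\xi$ from swallowing earlier intersections and to decide which intersections $F_\xi$ is obliged to reach); you invoke it only afterwards, to certify $\bigcap_{\eta\in A}G_\eta\ne\emptyset$, which is the wrong place. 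Relatedly, putting both $F_\xi$ and $G_\xi$ inside the small set $V$ is backwards: freeness needs the later $F$'s to be large (typically the complement of a small open envelope of $G_\xi$) so they can reach earlier intersections located far from $V$; with $F_\xi\subseteq V$ the requirement $\bigcap_{\eta\in A}G_\eta\cap F_\xi\ne\emptyset$ already fails for $|B|=1$ unless the choice is managed by exactly the mechanism that is missing. Finally, a smaller point: the greedy ``refine a not-yet-refined open set'' bookkeeping does not obviously terminate within $\pi(K)$ steps, and ``we only introduced a new pair when forced to refine a genuinely new open set'' is not an argument; the standard fix is to pre-enumerate a $\pi$-base $(W_\xi:\xi<\pi(K))$ of minimum cardinality and require $\emptyset\ne\mathrm{int}(G_\xi)\subseteq W_\xi$ at stage $\xi$.
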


The following result is implicit in \cite{Rosenthal_cpt} and is a key component in
Todorcevic's proof that every Rosenthal compactum contains a dense metrizable subspace.
Let $\Qcal_K$ denote the forcing consisting of all nonempty 
open subsets of $K$ ordered so that $q < p$ means that the closure of $q$ is contained in $p$.
We will let $\dot x_G$ denote the $\Qcal_K$-name for the unique element of the intersection of $\dot G$,
when regarded as a collection of open sets.

\begin{thm} \label{sigma_disj_pi-base_char}
Suppose that $K$ is a compact Hausdorff space and $\Seq{(F_\xi,G_\xi) : \xi \in \Pi}$ is a sequence satisfying
the conclusion of Theorem \ref{pi_base}.
The following are equivalent:
\begin{enumerate}[\indent a.]

\item \label{sigma_disj}
$K$ has a $\sigma$-disjoint $\pi$-base.

\item \label{gen_1st_ctbl}
$\Qcal_K$ forces that $\dot x_G$ has a countable neighborhood base.

\item \label{coll_Pi}
$\Qcal_K$ forces that
$|\{\xi \in \check \Pi : \check G_\xi \in \dot G\} | \leq \aleph_0$.

\end{enumerate}
\end{thm}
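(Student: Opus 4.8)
The plan is to prove the cycle $(\ref{coll_Pi}) \Rightarrow (\ref{gen_1st_ctbl}) \Rightarrow (\ref{sigma_disj}) \Rightarrow (\ref{coll_Pi})$, the first two implications resting on a direct analysis of the generic point and the last on Theorem \ref{pi_base}. First I would record the basic genericity facts about $\Qcal_K$: using Property \ref{filter_name}, together with the identity $\mathrm{int}(\overline{\mathrm{int}(G_\xi)}) = \mathrm{int}(G_\xi)$, one checks that the set appearing in condition $(\ref{coll_Pi})$ is interpreted by a $\Qcal_K$-generic filter $G$ as $\{\xi \in \Pi : \dot x_G \in \mathrm{int}(G_\xi)\}$ (in $\dot K$). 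The central observation, proved by a density argument exploiting that $\dot K$ is regular and that $\{\mathrm{int}(G_\xi) : \xi \in \Pi\}$ is a $\pi$-base (so the set $\{r \in \Qcal_K : \overline r \subseteq \mathrm{int}(G_\eta)\ \mathrm{for\ some}\ \eta\}$ is dense), is that the family $\{\mathrm{int}(G_\xi) : \dot x_G \in \mathrm{int}(G_\xi)\}$ is a neighborhood base at $\dot x_G$ in $\dot K$: each member is visibly a neighborhood of $\dot x_G$, and given a neighborhood $W$ one finds $r \in G$ with $\overline r \subseteq W$ and then $\eta$ with $\overline r \subseteq \mathrm{int}(G_\eta) \subseteq W$. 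The implication $(\ref{coll_Pi}) \Rightarrow (\ref{gen_1st_ctbl})$ is then immediate: if the index set is forced countable, this neighborhood base is forced countable.

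For $(\ref{gen_1st_ctbl}) \Rightarrow (\ref{sigma_disj})$ I would use that, since $\dot x_G$ is forced to have a countable neighborhood base and the family above is a neighborhood base at it, there are $\Qcal_K$-names $\langle \dot \xi_n : n < \infty \rangle$ with $\one \forces$ ``$\{\mathrm{int}(G_{\dot \xi_n}) : n < \infty\}$ is a neighborhood base at $\dot x_G$'' (pull members of the large neighborhood base inside those of a fixed countable one). For each $n$ fix a maximal antichain $A_n$ among the conditions deciding $\dot \xi_n$ (dense by Property \ref{decide}), and for $r \in A_n$ with $r \forces \dot \xi_n = \check \eta_r$ set $B_r := r \cap \mathrm{int}(G_{\eta_r})$, which is nonempty because $r$ forces $\dot x_G$ into it. Since distinct members of $A_n$ are incompatible, hence disjoint as open sets, $\Bcal_n := \{B_r : r \in A_n\}$ is a disjoint family; and $\bigcup_n \Bcal_n$ is a $\pi$-base, since for nonempty open $V$ one picks $r_0$ with $\overline{r_0} \subseteq V$, observes $r_0 \forces \exists n\,(\dot x_G \in \mathrm{int}(G_{\dot \xi_n}) \subseteq \check r_0)$, extends to a condition deciding $\dot \xi_n = \check \eta$ (so $\mathrm{int}(G_\eta) \subseteq r_0$ holds in the ground model), and matches it with the unique $r \in A_n$ compatible with it, getting $\eta_r = \eta$ and hence $B_r \subseteq \mathrm{int}(G_\eta) \subseteq r_0 \subseteq V$.

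The remaining implication $(\ref{sigma_disj}) \Rightarrow (\ref{coll_Pi})$ I would prove contrapositively. If some condition forces $\Xi := \{\xi : \dot x_G \in \mathrm{int}(G_\xi)\}$ to be uncountable, then in the extension the family $\{G_\xi : \xi \in \Xi\}$ has the finite intersection property, all of its members containing $\dot x_G$; since nonemptiness of finite intersections of ground-model compacta is absolute, Theorem \ref{pi_base}, applied in the ground model to finite subfamilies, yields that $\langle (F_\xi, G_\xi) : \xi \in \Xi \rangle$ is an uncountable free sequence in $\dot K$. It then remains to see that this is incompatible with $\dot K$ possessing a $\sigma$-disjoint $\pi$-base, together with the fact that having a $\sigma$-disjoint $\pi$-base passes from $K$ to $\dot K$. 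I expect this to be the main obstacle: it is here that one must combine the disjoint-refinement combinatorics, the minimality of the $\pi$-base guaranteed by Theorem \ref{pi_base}, and a preservation argument for the generic extension, whereas everything before it is a routine density-and-names computation.
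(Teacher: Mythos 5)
Your first two implications are sound and essentially reproduce the paper's machinery: the observation that $\{\mathrm{int}(G_\xi) : \check G_\xi \in \dot G\}$ is forced to be a neighborhood base of $\dot x_G$ gives (\ref{coll_Pi})$\Rightarrow$(\ref{gen_1st_ctbl}) exactly as in the paper, and your (\ref{gen_1st_ctbl})$\Rightarrow$(\ref{sigma_disj}) is the paper's antichain construction (maximal antichains deciding the names $\dot \xi_n$, their union checked to be a $\sigma$-disjoint $\pi$-base), merely launched from (\ref{gen_1st_ctbl}) instead of (\ref{coll_Pi}). The problem is the third implication, which is where the actual content of the equivalence sits, and which you leave open. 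Worse, the reduction you propose cannot be completed as stated: the existence of an uncountable free sequence in a compact Hausdorff space is \emph{not} incompatible with that space having a $\sigma$-disjoint $\pi$-base. For instance $\beta\omega$ has a disjoint $\pi$-base (the singletons of isolated points) yet is not countably tight and so contains uncountable free sequences. Consequently no preservation argument transferring the $\sigma$-disjoint $\pi$-base from $K$ to $\dot K$ can produce the contradiction you are after; the free sequence by itself is the wrong invariant to play off against a $\pi$-base.

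What you must use, and what your outline discards, is that your free sequence consists of members of $\dot G$: every $G_\xi$ with $\xi \in \Xi$ is a neighborhood of the single point $\dot x_G$. The paper closes the loop through (\ref{gen_1st_ctbl}): if $\Ucal = \bigcup_n \Ocal_n$ is a $\sigma$-disjoint $\pi$-base, then $\Ucal$ is dense in $\Qcal_K$ while each $\Ocal_n$ is an antichain, so it is forced that $\check\Ucal \cap \dot G$ generates $\dot G$ and meets each $\check\Ocal_n$ in at most one point; hence $\dot x_G$ is forced to have a countable neighborhood base. This is a purely ground-model computation with the forcing relation and needs no reinterpretation of $\Ucal$ inside $\dot K$ at all. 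Then, against a condition forcing $\Xi$ uncountable, the free sequence $\Seq{(F_\xi,G_\xi) : \xi \in \Xi}$ of neighborhoods of $\dot x_G$ shows that no countable family can be a neighborhood base at $\dot x_G$ (this is the paper's ``no smaller neighborhood base can suffice''), a local character argument at the generic point rather than a global tightness statement about $\dot K$. So the missing step in your write-up is precisely (\ref{sigma_disj})$\Rightarrow$(\ref{gen_1st_ctbl}) together with this character estimate, and your proposed route around it through the topology of $\dot K$ fails.
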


\begin{proof}
To see that (\ref{sigma_disj}) implies (\ref{gen_1st_ctbl}), first observe that if $\Ucal$
is a $\pi$-base for the topology on $K$, then $\Ucal$ is dense 
as a subset of $\Qcal_K$.
Hence $\Qcal_K$ forces
that $\check \Ucal \cap \dot G$ generates $G$.
Also, if $\Ocal$ is a pairwise disjoint family of open sets, then it is forced
that $|\check \Ocal \cap \dot G| \leq 1$.
Hence if $\Ucal$ is a $\sigma$-disjoint $\pi$-base,
then it is forced that $\check \Ucal \cap \dot G$ is a countable neighborhood base of $\dot x_G$.

The equivalence between (\ref{gen_1st_ctbl}) and (\ref{coll_Pi}) follows from the fact that
\[
\{G_\xi : (\xi \in \Pi) \mand (\dot x_G \in G_\xi)\}
\]
is forced to be a neighborhood base for $\dot x_G$ and that
\[
\{(F_\xi,G_\xi) : (\xi \in \Pi) \mand (\dot x_G \in G_\xi)\}
\]
is a free sequence and hence no smaller neighborhood base can suffice.

Finally, to see that (\ref{coll_Pi}) implies (\ref{sigma_disj}),
suppose that every condition forces that
$|\{\xi \in \check \Pi : \check G_\xi \in \dot G\} | \leq \aleph_0$.
Let $\Seq{\dot \xi_n : n < \infty}$ be a sequence of $\Qcal_K$-names such that
every condition of $\Qcal_K$ forces that
\[
\{\dot \xi_n : n < \infty\} = \{\xi \in \check \Pi : \dot x_G \in \check G_\xi\}.
\]
Let $\Ocal_n$ be a maximal antichain in $\Qcal_K$ such that elements of $\Ocal_n$ decide
$\dot \xi_n$ and set $\Ucal  := \bigcup_{n=0}^\infty \Ocal_n$.
Clearly $\Ucal$ is $\sigma$-disjoint; it suffices to show that it is a $\pi$-base.
To see this, suppose that $V$ is a nonempty open subset of $K$.
Let $p$ be a nonempty regular open subset of $V$ and let
$\dot n$ be such that $p$ forces that $\check G_{\dot \xi_{\dot n}} \subseteq \check V$.
Now let $U$ be an element of $\Ucal$ which decides $\dot \xi_{\dot n}$ to be $\xi$.
Notice that we must have that $U \subseteq G_\xi \subseteq V$.
\end{proof}

Recall that a topological space $X$ is \emph{countably tight} if whenever $A \subseteq X$ and
$x \in \cl(A)$, there is a countable $A_0 \subseteq A$ such that $x \in \cl(A_0)$.
It is easy to show that continuous images of countably tight spaces are countably tight.
It is well known that in the class of compact Hausdorff spaces, countable tightness
is equivalent to the nonexistence of uncountable free sequences of regular pairs.
We now have the following corollary.

\begin{cor}\label{dense_metr}
Let $\Pcal$ be a forcing.
If $K$ is compact, contains a dense first countable subspace, and
\[
\one \forces_\Pcal \dot K \textrm{ is countably tight}
\]
then $K$ contains a dense metrizable subspace.
\end{cor}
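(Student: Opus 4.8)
The plan is to reduce, via Theorems~\ref{pi_base} and~\ref{sigma_disj_pi-base_char}, to the single assertion that $K$ has a $\sigma$-disjoint $\pi$-base, and then to invoke the (routine, classical) topological fact that a compact Hausdorff space which possesses a $\sigma$-disjoint $\pi$-base together with a dense set of $G_\delta$ points has a dense metrizable subspace. To be in position to apply the latter, I would first observe that a dense first countable subspace $D$ of $K$ automatically consists of $G_\delta$ points of $K$: given $x\in D$ with a countable neighborhood base $\{W_m\cap D:m<\infty\}$ in $D$, where each $W_m$ is open in $K$ and $x\in W_m$, and given $y\neq x$, regularity of $K$ yields an open $U\ni x$ with $y\notin\overline U$; for some $m$ we have $W_m\cap D\subseteq U\cap D$, and density of $D$ gives $\overline{W_m}=\overline{W_m\cap D}\subseteq\overline U$, so $y\notin W_m$; hence $\{x\}=\bigcap_m W_m$. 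Thus $D$ witnesses that $K$ has a dense set of $G_\delta$ points, and it remains only to produce a $\sigma$-disjoint $\pi$-base for $K$.

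By Theorem~\ref{sigma_disj_pi-base_char}, $K$ has a $\sigma$-disjoint $\pi$-base if and only if $\Qcal_K$ forces that $\dot x_G$ has a countable neighborhood base, so I would work to establish the latter. Fix a sequence $\Seq{(F_\xi,G_\xi):\xi\in\Pi}$ as in Theorem~\ref{pi_base}. Recall from the proof of Theorem~\ref{sigma_disj_pi-base_char} that $\Qcal_K$ forces $\{G_\xi:\xi\in\Pi,\ \dot x_G\in G_\xi\}$ to be a neighborhood base for $\dot x_G$. Since all of these sets contain $\dot x_G$, every finite subfamily has nonempty intersection, so by Theorem~\ref{pi_base} --- whose relevant instances survive reinterpretation, each $F_\xi$ and $G_\xi$ being a zero set of $K$ and a finite intersection of zero sets having the same infimum over $\dot K$ as over $K$ (by density of $\check K$ in $\dot K$) --- it is forced that $\Seq{(F_\xi,G_\xi):\dot x_G\in G_\xi}$ is a free sequence of regular pairs in $\dot K$. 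Consequently, if $\Qcal_K$ did \emph{not} force $\dot x_G$ to have countable character, then below some condition $\Qcal_K$ would force $\{\xi\in\check\Pi:\dot x_G\in\check G_\xi\}$ to be uncountable, hence would force $\dot K$ to carry an uncountable free sequence of regular pairs, and in particular would force that $\dot K$ is not countably tight. It therefore suffices to show that $\Qcal_K$ forces $\dot K$ to be countably tight.

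This last reduction is where the hypothesis on $\Pcal$ must be used, and it is the step I expect to be the main obstacle. What is needed is that countable tightness of $\dot K$ is \emph{generically absolute}: once it is forced by $\Pcal$, it is forced by $\Qcal_K$ (indeed by every forcing). The approach would be to express the failure of countable tightness of $\dot K$ --- i.e.\ the existence of an uncountable free sequence of regular pairs, which by Theorem~\ref{pi_base} may be taken of the form $\Seq{(F_\xi,G_\xi):\xi\in\Xi}$ with $\Xi\subseteq\Pi$ uncountable and $\{G_\xi:\xi\in\Xi\}$ having the finite intersection property --- in a form to which Proposition~\ref{gen_abs_prop} applies: after thinning $\Xi$ by a $\Delta$-system argument in the spirit of Lemma~\ref{base_case}, the existence of such a sequence should reduce to the nonemptiness of a suitable Boolean combination of universally Baire subsets of a Polish space coding the finite approximations, and Proposition~\ref{gen_abs_prop} then yields the absoluteness. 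Granting this, $\one\forces_\Pcal\dot K$ countably tight gives $\one\forces_{\Qcal_K}\dot K$ countably tight, which by the previous paragraph forces $\dot x_G$ to have a countable neighborhood base; Theorem~\ref{sigma_disj_pi-base_char} then furnishes the $\sigma$-disjoint $\pi$-base, and the reduction of the first paragraph completes the proof.
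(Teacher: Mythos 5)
Most of your outline is sound and close to the paper's: the reduction to producing a $\sigma$-disjoint $\pi$-base via Theorem \ref{sigma_disj_pi-base_char}, and the endgame (your $G_\delta$-point computation is correct, and your ``routine classical fact'' is exactly the form in which the paper uses White's theorem \cite{metrization:White} after tracing the $\pi$-base on the dense first countable subspace). The genuine gap is the step you yourself flag and then assume (``Granting this''): the transfer from ``$\one\forces_\Pcal \dot K$ is countably tight'' to ``$\one\forces_{\Qcal_K} \dot K$ is countably tight.'' The principle you propose to prove --- that countable tightness of $\dot K$ is generically absolute, so that once one forcing forces it every forcing does --- is false, and Proposition \ref{gen_abs_prop} cannot deliver it: failure of countable tightness is witnessed by an \emph{uncountable} free sequence, and uncountability is not expressible as nonemptiness of a countable Boolean combination of universally Baire subsets of a Polish space; it is precisely the kind of assertion that forcing can change. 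Concretely, take $K = 2^{\omega_1} \subseteq [0,1]^{\omega_1}$. The collapse $\omega_1^{<\omega}$ (the last example of Section \ref{cast:sec}) makes $\check\omega_1$ countable, so it forces $\dot K$, the closure of $\check K$ in $[0,1]^{\check\omega_1}$, to be compact metrizable and hence countably tight; on the other hand $\Qcal_K$ has the finite partial functions from $\omega_1$ to $2$ as a dense suborder, is c.c.c., and forces $\dot K$ to be the $2^{\omega_1}$ of the extension, where the free sequence $\Seq{\chi_\alpha : \alpha < \omega_1}$ of characteristic functions of initial segments still witnesses failure of countable tightness. (This $K$ does not refute the corollary itself, since it has no dense first countable subspace --- as your own argument shows, no point of a dense subspace of $2^{\omega_1}$ can have countable character --- but your bridging claim makes no use of that hypothesis, so no $\Delta$-system refinement of your coding can rescue it.)

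What the paper actually does at this point is much more modest: its proof feeds the hypothesis directly into clause (\ref{gen_1st_ctbl})/(\ref{coll_Pi}) of Theorem \ref{sigma_disj_pi-base_char}, i.e.\ the corollary is designed to be invoked with $\Pcal = \Qcal_K$. In the application to Rosenthal compacta this costs nothing, because Theorem \ref{RC_abs} is an absoluteness statement valid for \emph{every} forcing, so one simply instantiates it at $\Qcal_K$ and obtains countable tightness of $\dot K$ in the $\Qcal_K$-extension, which together with your (correct) free-sequence argument yields condition (\ref{gen_1st_ctbl}) and hence the $\sigma$-disjoint $\pi$-base. So the repair is to drop the attempted general transfer: either run your argument with $\Pcal = \Qcal_K$ (your second paragraph already essentially does this), or, if you insist on an arbitrary single $\Pcal$, you would need an argument that genuinely uses the dense first countable subspace at this stage --- an absoluteness principle for countable tightness alone cannot do the job.
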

 
\begin{proof}
By our assumption and Theorem \ref{sigma_disj_pi-base_char},
$K$ has a $\sigma$-disjoint $\pi$-base and thus so does the
dense first countable subspace.
By a result of H.E. White \cite{metrization:White},
any first countable Hausdorff space with a $\sigma$-disjoint
$\pi$-base has a dense metrizable subspace.
\end{proof}

An immediate consequence of the results we have developed so far is the following result
of Todorcevic.
Previously it had not been known whether there were nonseparable Rosenthal compacta which had no uncountable family of pairwise disjoint open sets
or whether certain specific Rosenthal compacta had dense metrizable subspaces
(see the discussion in \cite{Rosenthal_cpt}).
 
\begin{thm} \cite{Rosenthal_cpt}
Rosenthal compacta contain dense metrizable subspaces.
\end{thm}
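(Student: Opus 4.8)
The plan is to deduce the theorem directly from Corollary~\ref{dense_metr}, combined with Theorems~\ref{RC_point_1st_ctbl} and~\ref{RC_abs}; all of the genuinely combinatorial content has already been packaged into Theorem~\ref{sigma_disj_pi-base_char}. So let $K$ be a Rosenthal compactum; without loss of generality $K$ is a closed subspace of $\BC_1(X)$ for some Polish space $X$. Since each evaluation map $f \mapsto f(x)$ is continuous on the compact space $K$, its range is bounded, so $K$ is in fact a closed subspace of a product $B$ of compact intervals in $\R^X$. By Theorem~\ref{RC_point_1st_ctbl}, $K$ contains a dense set of points with a countable neighborhood base, i.e.\ a dense first countable subspace. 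By Corollary~\ref{dense_metr} it therefore suffices to exhibit a single forcing $\Pcal$ with $\one \forces_\Pcal \dot K$ is countably tight.

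First I would check that $\dot K$ is forced to be a genuine Rosenthal compactum by \emph{any} forcing $\Pcal$. By definition $\dot K$ is the name for the closure of $\{(\dot f,\one) : f \in K\}$ inside $\dot\R^{\dot X}$, where $\dot f$ is the canonical extension of $f$ to $\BC_1(\dot X)$ afforded by Lemma~\ref{BC_ext}. Since $\check K \subseteq \check B$ and $\check B$ is (forced to be) a product of compact intervals, hence compact and closed in $\dot\R^{\dot X}$, the set $\dot K$ is forced to be a closed subset of a compact set, hence compact. By Theorem~\ref{RC_abs}, $\one \forces_\Pcal$ ``every element of $\dot K$ is a Baire class~$1$ function on $\dot X$'', so $\dot K$ is forced to be a closed subspace of $\BC_1(\dot X)$, i.e.\ a Rosenthal compactum in the extension. (It is natural, following the proof of Theorem~\ref{RC_point_1st_ctbl}, to take $\Pcal$ to be the $\sigma$-closed forcing of all countable partial functions from $\omega_1$ to $\R$ of Section~\ref{sigma-closed:sec}: then $\Pcal$ additionally forces $|\dot\R| = \dot\aleph_1$, so $\dot K$ has cardinality at most $\aleph_1$; but for the present argument any forcing $\Pcal$ will do.)

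Now ``every Rosenthal compactum is countably tight'' is a theorem of ZFC by Theorem~\ref{RC_ctbly_tight}, and hence is forced by $\one$ for every forcing $\Pcal$ (Property~\ref{ZFC_forced}). Combined with the previous paragraph, this gives $\one \forces_\Pcal$ ``$\dot K$ is countably tight'', which is precisely the remaining hypothesis of Corollary~\ref{dense_metr}. Applying that corollary, $K$ contains a dense metrizable subspace, which is the assertion of the theorem.

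The only delicate point is the verification in the second paragraph that $\dot K$ remains a Rosenthal compactum in the generic extension --- i.e.\ that it stays compact and continues to consist only of Baire class~$1$ functions --- which is exactly where Lemma~\ref{BC_ext} and Theorem~\ref{RC_abs} (the generic absoluteness machinery of Section~\ref{abs:sec}) are used. Once that is in hand, countable tightness of $\dot K$ is automatic from Theorem~\ref{RC_ctbly_tight}, and the passage from countable tightness plus a dense first countable subspace to a dense metrizable subspace --- via $\sigma$-disjoint $\pi$-bases, free sequences of regular pairs, and White's metrization theorem --- is already carried out in Theorem~\ref{sigma_disj_pi-base_char} and Corollary~\ref{dense_metr}.
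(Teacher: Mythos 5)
Your overall route is the same as the paper's: Theorem \ref{RC_point_1st_ctbl} supplies the dense first countable subspace, Theorems \ref{RC_abs} and \ref{RC_ctbly_tight} supply countable tightness in the extension, and Corollary \ref{dense_metr} closes the argument. However, there is a genuine (if small) gap in the middle step: you verify countable tightness of the wrong object. The name you analyze --- the closure of $\{(\dot f,\one) : f \in K\}$ in $\dot \R^{\dot X}$, i.e.\ the reinterpretation of $K$ from Section \ref{RC_absolute:sec}, which the paper's own proof denotes $\tilde K$ --- is not the $\dot K$ that appears in the hypothesis of Corollary \ref{dense_metr}. In the section containing that corollary, $\dot K$ is explicitly declared to be the closure of $\check K$ in $[0,1]^{\check I}$, a cube indexed by \emph{ground model} coordinates; that is the object to which the free-sequence and $\pi$-base analysis of Theorem \ref{sigma_disj_pi-base_char} refers. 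The two names are genuinely different: $\tilde K$ consists of (names for) functions on the new Polish space $\dot X$, while $\dot K$ lives in $\dot \R^{\check X}$, and an element of $\tilde K$ is not determined by its restriction to $\check X$ (Baire class $1$ functions are not determined by their values on a dense set), so one cannot simply identify them. The paper bridges this in one sentence: the restriction map $\dot \R^{\dot X} \to \dot \R^{\check X}$ is continuous and carries $\tilde K$ onto $\dot K$, and countable tightness is preserved under continuous images (a fact recorded just before Corollary \ref{dense_metr}); hence $\dot K$ is also forced to be countably tight. Your proof needs this step, or some substitute for it, before Corollary \ref{dense_metr} can be invoked.

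A secondary point: your remark that ``any forcing $\Pcal$ will do'' matches the literal statement of Corollary \ref{dense_metr}, but the proof of that corollary runs through Theorem \ref{sigma_disj_pi-base_char}, which concerns the specific forcing $\Qcal_K$ of nonempty open subsets of $K$; the paper accordingly instantiates $\Pcal := \Qcal_K$, and that is the safest choice here as well. Your verification that the reinterpreted compactum is forced to be a Rosenthal compactum (via Lemma \ref{BC_ext} and Theorem \ref{RC_abs}) works verbatim for $\Qcal_K$, so the repair is only to fix $\Pcal = \Qcal_K$ and insert the projection step above.
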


\begin{proof}
Let $K$ be a Rosenthal compactum and let $\tilde K$ be the $\Qcal_K$-name for the reinterpretation
of $K$ as a Rosenthal compactum in the generic extension by $\Qcal_K$.
By Theorem \ref{RC_point_1st_ctbl}, $K$ contains a dense first countable subspace.
By Theorem \ref{RC_abs},
$\Qcal_K$ forces that $\tilde K$ is a Rosenthal compactum and hence is countably tight by
Theorem \ref{RC_ctbly_tight}.
Since $\dot K$, as defined in the beginning of this section, is a continuous projection
of $\tilde K$, it follows that $\dot K$ is forced to be countably tight.
By Corollary \ref{dense_metr}, $K$ contains a dense metrizable subspace.
\end{proof}

\section{Further reading}

As was mentioned earlier, Kunen's book \cite{set_theory:Kunen}
is a good next step if one is interested in further reading on forcing.
It also contains a large number of exercises.
Chapters VII and VIII provide a standard treatment of forcing, presented with a more semantic orientation, and
Chapter II provides some useful background on combinatorial set theory.

Further reading on forcings which add a single real --- such as $\Cohen$, $\Rand$, $\Mathias$, $\Ameoba_\epsilon$ ---
can be found in \cite{set_theory_reals}.
Also, Laver's work on the Borel Conjecture \cite{Borel_conj} is a significant early paper on the subject which already contains
important techniques in the modern set theory such as countable support iteration.
Zapletal's \cite{forcing_idealized} gives a different perspective on forcings related to set theory of the reals.

For those who can find a copy, \cite{forcing_appl} is also good further reading on forcing and provides a different perspective than \cite{set_theory:Kunen}.
Those readers who have studied the material on Martin's Axiom in \cite[II,VIII]{set_theory:Kunen}  
and/or \emph{forcing axioms} in \cite{forcing_appl} are referred to
\cite{PFA_ICM}, \cite{forcing_axioms:Todorcevic}, and \cite{FMS} where this concept is further developed
and the literature is surveyed.

Solovay's analysis of the model $L(\R)$ \cite{solovay_model}
is a landmark result in the study of forcing and large cardinals.
(Solovay actually analyzed a larger model than $L(\R)$, but $L(\R)$ has since shown itself
to be more fundamental and now bears the name \emph{Solovay model}.)
At the same time, \cite{solovay_model} should be accessible to readers who
have been through this article. 
A proof of Solovay's theorem is reproduced in \cite{higher_infinite} which is also
a standard encyclopedic reference on large cardinals. 
See also Mathias's infinite dimensional generalization
of Ramsey's Theorem which holds in $L(\R)$ after collapsing an appropriate large cardinal \cite{happy_families}.
An explanation of the special role Solovay's model plays in the foundations of mathematics
can be found in \cite{LCLM} \cite{L(R)_abs:Woodin}.
\cite{stationary_tower} provides a good introduction to the methods needed to establish absoluteness results
about $L(\R)$.


\def\Dbar{\leavevmode\lower.6ex\hbox to 0pt{\hskip-.23ex \accent"16\hss}D}

\end{document}